\DeclareFontFamily{U}{mathc}{}
\DeclareFontShape{U}{mathc}{m}{it}
{<->s*[1.03] mathc10}{}
\DeclareMathAlphabet{\mathscr}{U}{mathc}{m}{it}
\newtheorem{theo}{Theorem}[section]
\newtheorem{cor}[theo]{Corollary}
\newtheorem{lemma}[theo]{Lemma}
\newtheorem{prop}[theo]{Proposition}
\newtheorem{defi}[theo]{Definition}
\newtheorem{rem}[theo]{Remark}
\newtheorem{example}[theo]{Example}
\newtheorem*{main}{Main Theorem}
\newcommand{\End}{\operatorname{End}\nolimits}
\newcommand{\Hom}{\operatorname{Hom}\nolimits}
\DeclareMathOperator{\sHom}{\operatorname{\mathscr{Hom}}\nolimits}
\renewcommand{\Im}{\operatorname{Im}\nolimits}
\newcommand{\Ker}{\operatorname{Ker}\nolimits}
\newcommand{\Coker}{\operatorname{Coker}\nolimits}
\newcommand{\id}{\operatorname{id}\nolimits}
\newcommand{\Ext}{\operatorname{Ext}\nolimits}
\newcommand{\hExt}{\operatorname{\widehat{Ext}}\nolimits}
\newcommand{\Tor}{\operatorname{Tor}\nolimits}
\newcommand{\hTor}{\operatorname{\widehat{Tor}}\nolimits}
\newcommand{\hotimes}{\operatorname{\widehat{\otimes}}\nolimits}
\newcommand{\Z}{\operatorname{\mathbb{Z}}\nolimits}
\newcommand{\N}{\operatorname{\mathbb{N}}\nolimits}
\renewcommand{\P}{\operatorname{\mathbf{P}}\nolimits}
\newcommand{\I}{\operatorname{\mathbf{I}}\nolimits}
\newcommand{\T}{\operatorname{\mathbf{T}}\nolimits}
\newcommand{\D}{\operatorname{\mathbb{D}}\nolimits}
\newcommand{\Aut}{\operatorname{Aut}\nolimits}
\newcommand{\Ae}{A^\textrm{e}}
\newcommand{\Hh}{\operatorname{H}\nolimits}
\newcommand{\hHh}{\operatorname{\widehat{H}}\nolimits}
\newcommand{\HH}{\operatorname{HH}\nolimits}
\newcommand{\hHH}{\operatorname{\widehat{HH}}\nolimits}
\numberwithin{equation}{section}
\begin{document}
\title[Algebraic structure on Tate-Hochschild cohomology]{Algebraic structure on Tate-Hochschild cohomology of a Frobenius algebra}

\author{Satoshi Usui}
\address{Graduate School of Mathematics, 
Tokyo University of Science, 
1-3 Kagurazaka, Shinjuku-ku, Tokyo 162-8601 JAPAN}
\curraddr{}
\email{1119702@ed.tus.ac.jp}
\thanks{}

\subjclass[2010]
{16E05, 
16E40
}

\keywords{Tate-Hochschild (co)homology, Frobenius algebra, Cup product, Cap product}

\date{}

\dedicatory{}

\begin{abstract}
We study cup product and cap product in Tate-Hochschild theory 
for a finite dimensional Frobenius algebra.
We show that Tate-Hochschild cohomology ring equipped with cup product is 
isomorphic to singular Hochschild cohomology ring introduced by Wang. 

An application of cap product occurs in Tate-Hochschild duality; 
as in Tate (co)homology of a finite group, 
the cap product with the fundamental class of 
a finite dimensional Frobenius algebra provides
 certain duality result between Tate-Hochschild cohomology and homology groups.
 
Moreover, we characterize minimal complete resolutions 
over a finite dimensional self-injective algebra 
by means of the notion of minimal complexes 
introduced by Avramov and Martsinkovsky.
\end{abstract}

\maketitle

\tableofcontents

%
%
\section*{Introduction}
For an associative algebra $A$ which is projective 
over a ground commutative ring $k$, 
\textit{Hochschild cohomology groups} $\Hh^{*}(A, M)$ of $A$ 
with coefficients in an $A$-bimodule $M$
are defined by the cohomology groups of  
the cochain complex $\sHom_{A \otimes_{k} A^{\circ}}(\P, M)$,
where  $\P$ is arbitrary projective resolution of $A$ as an $A$-bimoudle.
Recall that $\Hh^{i}(A, M)$ is isomorphic to the space of morphisms from $A$ to $\boldsymbol{\Sigma}^{i} M$ 
in the bounded derived category $\mathcal{D}^{{\rm b}}(A \otimes_{k} A^{\circ})$ of $A$-bimodules for $i \geq 0$, 
where $\boldsymbol{\Sigma}$ is the shift functor.
There is an operator, the so-called \textit{cup product} $\smile$,
$\Hh^{*}(A, M) \otimes \Hh^{*}(A, N) 
\rightarrow 
\Hh^{*}(A, M \otimes_{A} N)
$
for $A$-bimodules $M$ and $N$.
It is defined, on the level of complexes, by means of a \textit{diagonal approximation} associated with a given projective resolution of $A$. 
In particular, the cup product does not depend on the choice of a diagonal approximation and a projective resolution.
If $M = A$ and $N= A$, then Hochschild cohomology 
\[
\Hh^{\bullet}(A, A) := \bigoplus_{i \geq 0} \Hh^{i}(A, A)
\]
equipped with the cup product forms a graded algebra. 
In \cite{Gers63}, Gerstenhaber showed that 
the cup product induced by a certain diagonal approximation
obtained from the bar resolution of an associative algebra 
is graded commutative, 
that is, the Hochschild cohomology ring based on the bar resolution 
is graded commutative.
In conclusion, the result of him implies that 
the Hochschild cohomology ring via any projective resolution 
of an associative algebra is a graded commutative algebra.

Inspired by the Buchweitz's result 
on Tate cohomology of Iwanaga-Gorenstein algebras (\cite{Buch86}),
Wang \cite{Wang18} introduced \textit{singular Hochschild cochain complex}
$C_{\rm sg}(A, A)$ of an associative algebra $A$ over a field and proved that 
the singular Hochschild cohomology group $\HH_{\rm sg}^{i}(A, A)$ of $A$ is isomorphic to
the space of morphisms from $A$ to $\boldsymbol{\Sigma}^{i} A$ in the singularity category 
$\mathcal{D}_{{\rm sg}}(A \otimes_{k} A^{\circ})$ of $A \otimes_{k} A^{\circ}$ for any $i \in \Z$. 
Furthermore, he discovered a differential graded associative and unital product on $C_{\rm sg}(A, A)$ 
such that singular Hochschild cohomology
\[
\HH_{\rm sg}^{\bullet}(A, A) 
:= 
\bigoplus_{i \in \Z} \HH_{\rm sg}^{i}(A, A)
\]
equipped with the induced product 
is a graded commutative algebra.

In the case that an algebra $A$ is a finite dimensional Frobenius algebra, 
 the singular Hochschild cohomology groups of $A$ coincide with 
the cohomology groups based on a complete resolution of $A$. 
They are called \textit{Tate-Hochschild cohomology groups} of $A$ and denoted 
by $\hHh^{*}(A, A)$.
Therefore, Tate-Hochschild cohomology
\[
\hHh^{\bullet}(A, A) := \bigoplus_{i \in \Z} \hHh^{i}(A, A)
\]
becomes a graded commutative algebra
whose structure depends on the singular Hochschild cohomology ring structure.
On the other hand, Sanada \cite{Sanada92} constructed one (complete) diagonal approximation 
associated with the complete bar resolution of a finite dimensional Frobenius algebra.
In particular, the cup product induced by his diagonal approximation makes the Tate-Hochschild cohomology ring 
into a graded commutative algebra. 
These results motivate us to ask the following questions:
\begin{itemize}
  \setlength{\parskip}{2mm} 
    \item[(1)]
    Is the Tate-Hochschild cohomology ring given by Sanada isomorphic to
    the singular Hochschild cohomology ring introduced by Wang?
    \item[(2)]
    Is there the theory of cup product in Tate-Hochschild theory of any finite dimensional Frobenius algebra as in Hochschild theory?
\end{itemize}
Let us remark on the second question: Nguyen \cite{Nguy13} has already developed  
the theory of cup product on Tate-Hochschild cohomology
of a finite dimensional Hopf algebra.

In this paper, along the same lines as Brown \cite[Chapter VI, Section 5]{Brown94}, we will develop the theory of cup product in Tate-Hochschild theory of a finite dimensional Frobenius algebra.
More precisely, we will show that the existence of a diagonal approximation 
for arbitrary complete resolution of a given finite dimensional Frobenius algebra and 
that all diagonal approximations define exactly one cup product up to isomorphism (see Section \ref{sebsec:1}).
 That is the answer to the question (2).
Moreover, we will prove the following our main result, which is the answer to the question (1):
\begin{main}[{Theorem \ref{theo:7}}]
Let $A$ be a finite dimensional Frobenius algebra over a field. Then there exists an isomorphism
\[
\hHh^{\bullet}(A, A) \cong \HH_{{\rm sg}}^{\bullet}(A, A)
\] 
of graded commutative algebras. 
\end{main}
\noindent
We also deal with cap product in Tate-Hochschild theory and 
show that the cap product with the \textit{fundamental class} 
of a finite dimensional Frobenius algebra
gives certain duality result between Tate-Hochschild cohomology and homology groups.
These results allow us to prove that the cup product on Tate-Hochschild cohomology contains 
not only the cup product, but also the cap product on Hochschild (co)homology.

Moreover, we provide a characterization of minimal complete resolutions of finitely generated modules
over a finite dimensional self-injective algebra in the sense of Avramov and Martsinkovsky\ \cite{AvraMart02}. 
More concretely, we will show that any minimal complete resolution of a finitely generated module consists of its minimal projective resolution
and its (($-1$)-shifted) minimal injective resolution.

This paper is organized as follows: 
%
%
in Section $\ref{sec:1}$, we recall the basic notions related to Hochschild (co)homology groups 
and the cup product and the cap product on them.
%
%
Section $\ref{sec:2}$ is devoted to recalling the definitions of Tate and Tate-Hochschild (co)homology groups and to characterizing minimal complete resolutions over a finite dimensional self-injective algebra in terms of projective resolutions and injective resolutions.
%
%
Section $\ref{sec:3}$ contains our main theorem. Before proving it, 
we will define not only cup product, but also cap product
by using a diagonal approximation, and we prove that these operators coincide with composition products.
%
%
In Section $\ref{sec:4}$, we will show that the cap product induces duality 
between Tate-Hochschild cohomology and homology groups.
Using this result, we will prove that the cup product on Tate-Hochschild cohomology extends 
the cup product and the cap product on Hochschild (co)homology.

Throughout this paper, an algebra means an associative and unital algebra 
over a commutative ring, and all modules are left modules unless otherwise stated. 
The ground ring is taken to be a field when we assume a given algebra to be finite dimensional.
For a $k$-algebra $A$, we denote $D(-)= \Hom_{k}(-, k)$ and $(-)^{\vee} := \Hom_{A}(-, A)$.
Finally, $\otimes$ is an abbreviation for $\otimes_{k}$.

%
%
\section{Preliminaries} \label{sec:1}
Following Brown \cite{Brown94}, we briefly recall some basic notions related to Hochschild (co)homology groups and the cup product and the cap product on them.

%
%
\subsection{Hom complexes and tensor products of complexes}
A \textit{chain complex} $C = (C, d^{C})$ over an algebra $A$ is the pair of 
a graded $A$-module $C = \bigoplus_{n \in \mathbb{Z}} C_{n}$ and 
a graded $A$-linear map $d^{C}: C \rightarrow C$ of degree $-1$ such that $(d^{C})^{2}=0$.
Dually, a \textit{cochain complex} $C = (C, d_{C})$ is the pair of 
a graded $A$-module $C = \bigoplus_{n \in \mathbb{Z}} C^{n}$ and 
a graded $A$-linear map $d_{C}: C \rightarrow C$ of degree $1$ 
such that $d_{C}^2 = 0$.
In both cases, the graded map $d$ is called the \textit{differential} of $C$.
Note that any cochain complex $C$ can be regarded as a chain complex  
by reindexing $C_{n} = C^{-n}$, and vice versa. 
For a chain complex $C$ and $i \in \mathbb{Z}$, we define 
the $i$-shifted chain complex $\boldsymbol{\Sigma}^{i} C = (\boldsymbol{\Sigma}^{i} C, d^{\boldsymbol{\Sigma}^{i} C})$  

to be the chain complex given by
$(\boldsymbol{\Sigma}^{i} C)_{n} := C_{n-i}$ and $d^{\boldsymbol{\Sigma}^{i} C} := (-1)^{i} d^{C}$.
For two chain complexes $C$ and $C^{\prime}$ of $A$-modules, 
a \textit{chain map} $f: C \rightarrow C^{\prime}$ is a graded $A$-linear map
$f: C \rightarrow C^{\prime}$ of degree $0$ such that $d^{C^{\prime}} f = f d^{C}$. 

For two chain maps $f, g: C \rightarrow C^{\prime}$,
we say that $f$ is \textit{homotopic} to $g$ if there exists 
a graded $A$-linear map $h: C \rightarrow C^{\prime}$ of degree $1$ such that
$f -g = d^{C^{\prime}}h + h d^{C}$.
Then the graded map $h$ is called a \textit{homotopy} from $f$ to $g$.
In such a case, we denote $f \sim g$.
A chain map $f: C \rightarrow C^{\prime}$ is called \textit{null-homotopic} 
if $f \sim 0$.
It is well-known that the relation $\sim$ is an equivalence relation.  
We denote by $[C, C^{\prime}]$ the quotient $k$-module induced by this equivalence relation.
A chain map $f: C \rightarrow C^{\prime}$ is a \textit{homotopy equivalence} if there exists a chain map 
$g: C^{\prime} \rightarrow C$ such that $fg \sim \id_{C^{\prime}}$ and $g f \sim \id_{C}$, and
a chain map $f: C \rightarrow C^{\prime}$ is a \textit{quasi-isomorphism} if the morphism
$\Hh_{n}(f): \Hh_{n}(C) \rightarrow \Hh_{n}(C^{\prime})$ induced by $f$ is an isomorphism for any $n \in \Z$, 
where  $\Hh_{n}(C)$ is the $n$-th homology group of the complex $C$ defined by the quotient $A$-module 
$Z_{n}(C) / B_{n}(C)$ with $Z_{n}(C) =\Ker d^{C}_{n}$ and $B_{n}(C) = \Im d^{C}_{n+1}$.
We say that a chain complex $C$ is \textit{acyclic} if $\Hh_{n}(C) = 0$ for every $n \in \Z$ and that
a chain complex $C$ is \textit{contractible} if $id_{C} \sim 0$.
Then a homotopy from $\id_{C}$ to $0$ is called a \textit{contracting homotopy}. 

The following easy and well-known lemma and its dual lemma play important roles in this paper. 
We will include a proof of the first lemma.
%
%
\begin{lemma}[{\cite[Chapter I, Lemma 7.4]{Brown94}}] \label{lem:7}
Let $C$ and $C^{\prime}$ be chain complexes of $A$-modules and $r \in \Z$.
Suppose that $C_{i}$ is projective over $A$ for $i > r$ and that $\Hh_{i}(C^{\prime}) = 0$ for $i \geq r$.
Then 
\begin{enumerate}
    \item[(1)] 
        Any family $\{f_{i}: C_{i} \rightarrow C^{\prime}_{i}\}_{i \leq r}$ commuting with differentials extends to a chain map 
        $f: C \rightarrow C^{\prime}$.
    \item[(2)] 
        Let $f, g: C \rightarrow C^{\prime}$ be chain maps and 
        $\{ h_{i}: C_{i} \rightarrow C^{\prime}_{i+1}\}_{i \leq r}$ a family of $A$-linear maps
        such that
        $d^{C^{\prime}}_{i+1} h_{i} +h_{i-1}d^{C}_{i} = f_{i}-g_{i}$ for $i \geq r$. 
        Then $\{ h_{i}\}_{i \leq r}$ extends to a homotopy from $f$ to $g$.
\end{enumerate} 
\end{lemma}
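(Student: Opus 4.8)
The plan is a degreewise induction whose engine is simple: since $\Hh_i(C') = 0$ for $i \geq r$, in that range $Z_i(C') = B_i(C') = \Im d^{C'}_{i+1}$, so the differential $d^{C'}_{i+1}$ restricts to a surjection $C'_{i+1} \twoheadrightarrow Z_i(C')$ onto cycles; combined with projectivity of the $C_i$ for $i > r$, this is exactly what lets us lift maps and build the desired extension one degree at a time. No cleverness beyond bookkeeping is required.

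For (1), I would show by induction on $i > r$ that the given family extends to $\{f_j\}_{j \leq i}$ commuting with differentials. Assume $f_{i-1}$ is available — for $i-1 = r$ this is part of the hypothesis — with $d^{C'}_{i-1}f_{i-1} = f_{i-2}d^{C}_{i-1}$. Then $f_{i-1}d^{C}_{i}\colon C_i \to C'_{i-1}$ has image in $\Ker d^{C'}_{i-1} = Z_{i-1}(C')$, because $d^{C'}_{i-1}f_{i-1}d^{C}_{i} = f_{i-2}d^{C}_{i-1}d^{C}_{i} = 0$. Since $i-1 \geq r$ we get $Z_{i-1}(C') = \Im d^{C'}_{i}$, so $f_{i-1}d^{C}_{i}$ factors, by projectivity of $C_i$, through a map $f_i\colon C_i \to C'_i$ with $d^{C'}_{i}f_i = f_{i-1}d^{C}_{i}$. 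This completes the step. For (2), I would induct on $i > r$ assuming $h_{i-1}$ is in hand — for $i-1 = r$ it lies in the given family — satisfying $d^{C'}_{i}h_{i-1} + h_{i-2}d^{C}_{i-1} = f_{i-1}-g_{i-1}$. Put $\phi_i := f_i - g_i - h_{i-1}d^{C}_{i}\colon C_i \to C'_i$. Using that $f,g$ are chain maps and the inductive relation, one computes $d^{C'}_{i}\phi_i = (f_{i-1}-g_{i-1}-d^{C'}_{i}h_{i-1})d^{C}_{i} = h_{i-2}d^{C}_{i-1}d^{C}_{i} = 0$, so $\phi_i$ lands in $Z_i(C') = \Im d^{C'}_{i+1}$. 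Projectivity of $C_i$ then gives $h_i\colon C_i \to C'_{i+1}$ with $d^{C'}_{i+1}h_i = \phi_i$, which is precisely $d^{C'}_{i+1}h_i + h_{i-1}d^{C}_{i} = f_i - g_i$.

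The only place needing any care is the base of each induction, where one checks that the ``seam'' data imposed at level $r$ is exactly what is needed to start the recursion: in (1) the commutativity $d^{C'}_{r}f_r = f_{r-1}d^{C}_r$ of the given maps, and in (2) the relation $d^{C'}_{r+1}h_r + h_{r-1}d^{C}_r = f_r - g_r$; both of these feed directly into the cycle computations above at the first step. I do not expect any genuine obstacle here — it is the classical comparison argument — and the dual statement (for cochain complexes $C$ with $C^i$ injective for $i > r$ and $C'$ with vanishing cohomology in degrees $\geq r$) follows by running the identical argument in the opposite category, or simply by reindexing $C_n = C^{-n}$.
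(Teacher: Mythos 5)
Your proposal is correct and follows essentially the same argument as the paper: in both cases one observes that $f_{i-1}d^{C}_{i}$ (resp.\ $f_i - g_i - h_{i-1}d^{C}_{i}$) lands in cycles, uses $\Hh_{i-1}(C')=0$ (resp.\ $\Hh_i(C')=0$) to identify cycles with the image of the next differential, and then lifts along the surjection onto that image by projectivity of $C_i$. The paper only writes out part (1) via the canonical factorization $d^{C'} = \iota'\pi'$, which is the same computation you perform with $Z$ and $B$ notation, and your explicit treatment of part (2) matches what the paper leaves to the reader.
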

\begin{proof}
We only show $(1)$; the proof of $(2)$ is similar.
For each $r \in \Z$, let $d_{r}^{C^{\prime}} = \iota_{r}^{\prime} \pi_{r}^{\prime}$ be 
the canonical factorization through $\Im d_{r}^{C^{\prime}}$. 
Since we have $d_{r}^{C^{\prime}} f_{r} d_{r+1}^{C} = 0$ and $\Hh_{r}(C^{\prime}) = 0$,
there exists a lifting morphism $f_{r+1}^{\prime}$ of $f_{r} d_{r+1}^{C}$ such that $f_{r} d_{r+1}^{C} = \iota^{\prime}_{r} f_{r+1}^{\prime}$.
The projectivity of $C_{r+1}$ implies that there exists a morphism $f_{r+1}: C_{r+1} \rightarrow C_{r+1}^{\prime}$ such that
$f^{\prime}_{r+1} = \pi_{r+1}^{\prime} f_{r+1}$, so that we have
\begin{align*}
d_{r+1}^{C^{\prime}} f_{r+1}
=
\iota^{\prime}_{r}  (\pi_{r+1}^{\prime} f_{r+1})
=
\iota^{\prime}_{r} f^{\prime}_{r+1}
=
f_{r} d_{r+1}^{C}.
\end{align*}
Repeating this argument inductively, we obtain the desired chain map.
\end{proof}
%
%

%
%
\begin{lemma} \label{lem:8}
Let $C$ and $C^{\prime}$ be chain complexes of $A$-modules and $r \in \Z$.
Suppose that $C^{\prime}_{i}$ is injective over $A$ for $i < r$ and that $\Hh_{i}(C) = 0$ for $i \leq r$.
Then 
\begin{enumerate}
    \item[(1)] 
        Any family $\{ f_{i}: C_{i} \rightarrow C^{\prime}_{i} \}_{i \geq r}$ of morphisms of $A$-modules commuting with differentials extends to a chain map 
        $f: C \rightarrow C^{\prime}$.
    \item[(2)] 
        Let $f, g: C \rightarrow C^{\prime}$ be chain maps and $\{ h_{i}: C_{i} \rightarrow C^{\prime}_{i+1}\}_{i \geq r}$  
        a family of $A$-linear maps such that
        $d^{C^{\prime}}_{i+1} h_{i} +h_{i-1}d^{C}_{i} = f_{i}-g_{i}$ for $i \geq r$. 
        Then $\{ h_{i} \}_{i \geq r}$ extends to a homotopy from $f$ to $g$.
\end{enumerate} 
\end{lemma}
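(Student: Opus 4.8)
The plan is to obtain Lemma \ref{lem:8} as the formal dual of Lemma \ref{lem:7}. Passing to the opposite category interchanges projectives with injectives, epimorphisms onto images with monomorphisms out of coimages, and the lifting property of a projective object against an epimorphism with the extension property of an injective object against a monomorphism; under the accompanying reindexing of chain complexes, ``$C_i$ projective for $i>r$'' becomes ``$C'_i$ injective for $i<r$'' and ``$\Hh_i(C')=0$ for $i\geq r$'' becomes ``$\Hh_i(C)=0$ for $i\leq r$''. Thus the hypotheses of Lemma \ref{lem:8} match the images of those of Lemma \ref{lem:7}, and it suffices to run the proof of Lemma \ref{lem:7} with every arrow reversed. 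Since the paper gives a hands-on proof of Lemma \ref{lem:7} rather than quoting it, I would record the mirrored argument explicitly.

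For (1) the proof is by descending induction on $i\leq r$: assuming $f_j\colon C_j\to C'_j$ has been built for $j\geq i$ with all squares among them commuting, one constructs $f_{i-1}$. Put $\alpha:=d^{C'}_{i}f_{i}\colon C_{i}\to C'_{i-1}$. The commuting square at $i+1$ gives $\alpha\, d^{C}_{i+1}=d^{C'}_{i}d^{C'}_{i+1}f_{i+1}=0$, so $\alpha$ annihilates $\Im d^{C}_{i+1}=\Ker d^{C}_{i}$, the equality using $\Hh_{i}(C)=0$ with $i\leq r$. Hence $\alpha$ factors as $\alpha=\bar\alpha\,\pi$ through the canonical epimorphism $\pi\colon C_{i}\twoheadrightarrow C_{i}/\Ker d^{C}_{i}$. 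Writing $d^{C}_{i}=\mu\,\pi$ with $\mu\colon C_{i}/\Ker d^{C}_{i}\hookrightarrow C_{i-1}$ the induced monomorphism and using that $C'_{i-1}$ is injective (here $i-1<r$), extend $\bar\alpha$ along $\mu$ to a map $f_{i-1}\colon C_{i-1}\to C'_{i-1}$; then $f_{i-1}d^{C}_{i}=\bar\alpha\,\pi=d^{C'}_{i}f_{i}$, so the new square commutes and the induction proceeds. This is the proof of Lemma \ref{lem:7}(1) verbatim with images replaced by coimages and liftings by extensions, and each index condition invoked ($C'_{i-1}$ injective for $i-1<r$, $\Hh_i(C)=0$ for $i\leq r$) is exactly what the hypotheses supply. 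Part (2) is the analogous mirror of Lemma \ref{lem:7}(2): one extends the given homotopy data downward, at each stage reducing to extending, along a canonical monomorphism into some $C_i$, a map whose target is the appropriate term of $C'$; the obstruction is a cycle — by virtue of $f-g$ being a chain map and of the homotopy relation already secured one degree higher — hence a boundary by acyclicity of $C$ in the relevant degree.

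The step I expect to require the most care is the index bookkeeping in part (2): because a homotopy is a degree $+1$ map, the term of $C'$ that must be injective at a given stage of the downward construction is shifted by one compared with part (1), so one must confirm that the range in which $C'$ is assumed injective, together with the range in which the partial homotopy is given, is still exactly enough to start and sustain the induction. Beyond that, nothing new is needed — the whole of Lemma \ref{lem:8} is a routine transcription, into the dual, of the argument already carried out for Lemma \ref{lem:7}.
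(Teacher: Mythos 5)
Your proposal is correct and matches the paper's (implicit) approach: the paper states Lemma \ref{lem:8} without proof as the dual of Lemma \ref{lem:7}, and your mirrored argument for part (1) — factoring $d^{C'}_{i}f_{i}$ through the coimage of $d^{C}_{i}$ using $\Hh_{i}(C)=0$ and then extending along the resulting monomorphism into $C_{i-1}$ by injectivity of $C'_{i-1}$ — is exactly the intended dualization. Your worry about the index bookkeeping in part (2) is well founded, but it points to a defect of the statement rather than of your argument: for the relation at $i=r$ to make sense and for the first extension step to have an injective target ($C'_{r-1}$ rather than $C'_{r}$), the given family should be $\{h_i\}_{i\geq r-1}$, which is precisely what the exact dual of Lemma \ref{lem:7}(2) yields and how the lemma is actually invoked later (e.g.\ in Proposition \ref{prop:3}, where the partial homotopy is indexed by $i\geq -1$ and the relation holds for $i\geq 0$).
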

%
%

Let $C$ and $C^\prime$ be two chain complexes of $A$-modules. 
We define the tensor product $C \otimes_{A} C^{\prime}$ as the chain complex with components 
\begin{align*}
(C \otimes_{A} C^{\prime})_{n} := \bigoplus_{i \in \Z} C_{n-i} \otimes_{A} C^{\prime}_{i}
\end{align*}
and differentials 
$d^{C \otimes_{A} C^{\prime}}_{n} : (C \otimes_{A} C^{\prime})_{n} \rightarrow (C \otimes_{A} C^{\prime})_{n-1}$
given by
\begin{align*}
d^{C \otimes_{A} C^{\prime}}_{n}(x \otimes_{A} x^{\prime}) 
:= d^{C}(x) \otimes x^{\prime} +(-1)^{|x|} x \otimes d^{C^{\prime}}(x^{\prime}) 
\end{align*}
for homogeneous elements $x \in C$ and $x^{\prime} \in C^{\prime}$.

Now we define $\sHom_{A}(C, C^{\prime})$ to be the chain complex with components 
\begin{align*}
\sHom_{A}(C, C^{\prime})_{n} := \prod_{i \in Z} \Hom_{A}(C_{i}, C^{\prime}_{i+n})
\end{align*}
and differentials 
\[
 d^{\sHom_{A}(C, C^{\prime})}_{n} : \sHom_{A}(C, C^{\prime})_{n} \rightarrow \sHom_{A}(C, C^{\prime})_{n-1}
\]
given by
\begin{align*}
 d^{\sHom_{A}(C, C^{\prime})}_{n}(f) 
 := \left(  d^{C^{\prime}}_{i+n} f_{i} -(-1)^{n} f_{i-1} d^{C}_{i} \right)_{i \in \Z}
\end{align*}
for any $f = (f_{i})_{i \in \Z} \in \sHom_{A}(C, C^{\prime})_{n}$. 
One sees that $\Hh _{n}(\sHom_{A}(C, C^{\prime})) = [C, \boldsymbol{\Sigma}^{-n} C^{\prime}]$ 
for any $n \in \Z$. 
We put $[C, C^{\prime}]_{n} := [C, \boldsymbol{\Sigma}^{-n} C^{\prime}] $.

Let $C$ be a chain complex of left $A$-modules. 
We define the \textit{$k$-dual complex} $\D(C)$ of $C$ by the cochain complex $\sHom_{k}(C, k)$ of right $A$-modules.
Moreover, the \textit{$A$-dual complex} $C^{\vee}$ of $C$ is defined as the cochain complex $\sHom_{A}(C, A)$ of right $A$-modules.

Let $C, C^{\prime}$ and $C^{\prime \prime}$ be chain complexes of $A$-modules.
The composition of graded maps is defined to be
the chain map
\[
 \sHom_{A}(C^{\prime}, C^{\prime \prime}) \otimes \sHom_{A}(C, C^{\prime})  \xrightarrow{\circ} \sHom_{A}(C, C^{\prime \prime})
\]
sending 
$u \otimes v \in \sHom_{A}(C^{\prime}, C^{\prime \prime})_{s} \otimes \sHom_{A}(C, C^{\prime})_{r}$
to
\[
u \circ v := (u_{p+r} v_{p})_{p \in \Z} \in \sHom_{A}(C, C^{\prime \prime})_{r+s}.
\]
Hence the chain map induces a well-defined operator
\[
 \Hh_{*}(\sHom_{A}(C^{\prime}, C^{\prime \prime})) \otimes \Hh_{*}(\sHom_{A}(C, C^{\prime}))  \rightarrow \Hh_{*}(\sHom_{A}(C, C^{\prime \prime})).
\]

Let $C_{1}, C_{2}, C^{\prime}_{1}$ and $C^{\prime}_{2}$ be chain complexes of $A$-modules, 
and let $u: C_{1} \rightarrow C^{\prime}_{1}$ and $v: C_{2} \rightarrow C^{\prime}_{2}$ be 
graded maps of degree $s$ and of degree $t$, respectively.
The \textit{tensor product} $u \otimes_{A} v$ of $u$ and $v$ 
is defined as the graded map of degree $s+t$ given by 
\[
(u \otimes_{A} v)(x_{1} \otimes x_{2}) := (-1)^{|v||x_{1}|} u(x_{1}) \otimes_{A} v(x_{2})
\]
for homogeneous elements $x_{1} \in C_{1}$ and $x_{2} \in C_{2}$.
The tensor product of graded maps gives rise to a chain map
\[
\sHom_{A}(C_{1}, C^{\prime}_{1}) \otimes \sHom_{A}(C_{2}, C^{\prime}_{2}) \rightarrow \sHom_{A}(C_{1} \otimes_{A} C_{2}, C^{\prime}_{1} \otimes_{A} C^{\prime}_{2}).
\]

%
%
\subsection{Hochschild (co)homology groups}
Throughout this section and the next, let $A$ be a $k$-algebra which is projective over $k$. 
Let $\Ae$ denote the enveloping algebra $A \otimes A^{\circ}$ of $A$, where $A^{\circ}$ is the opposite algebra of $A$.
Suppose that all projective resolutions of $A$ are taken over $\Ae$.
We view any module $M$ as a complex concentrated in degree $0$. 
Let $\P = \bigoplus_{i \geq 0}P_{i}$ be a projective resolution of $A$, that is, 
a quasi-isomorphism $\P \xrightarrow{\varepsilon} A$ with $P_{i}$ finitely generated projective.
The epimorphism $\varepsilon : P_{0} \rightarrow A$ is called the \textit{augmentation} of $\P$.

The $n$-th \textit{Hochschild cohomology group $\Hh^{n}(A, M)$ of $A$ with coefficients in an $A$-bimodule $M$} is defined by
$\Hh^{n}(A, M) := \Hh^{n}(\sHom_{\Ae}(\P, M))$,
where  the $n$-th component of $\sHom_{\Ae}(\P, M)$ is given by 
\[
\sHom_{\Ae}(\P, M)^{n} =\sHom_{\Ae}(\P, M)_{-n} = \Hom_{\Ae}(P_{n}, M).
\]
The $n$-th \textit{Hochschild homology group $\Hh_{n}(A, M)$ of $A$ with coefficients in an $A$-bimodule $M$} is defined as
$\Hh_{n}(A, M) := \Hh_{n}(\P \otimes_{\Ae} M)$. 

%
%
\subsection{Cup product and cap product in Hochschild theory}
In this section, we recall the definitions of the cup product and of the cap product in Hochschild theory.
If $\P \xrightarrow{\varepsilon} A$ is a projective resolution, we can associate $\P$ with the following augmented chain complex having $A$ in degree $-1$:
\[
\cdots \rightarrow P_{2} \rightarrow P_{1} \rightarrow P_{0} \xrightarrow{\varepsilon} A \rightarrow 0 \rightarrow 0 \rightarrow \cdots.
\]
For two projective resolutions $\P \xrightarrow{\varepsilon} A$ and $\P^{\prime} \xrightarrow{\varepsilon^{\prime}} A$, 
a chain map $f: \P \rightarrow \P^{\prime}$ is called an \textit{augmentation-preserving chain map} if $\varepsilon^{\prime} f_{0} = \varepsilon$.
Since the tensor product $\P \otimes_{A} \P$ of $\P$ with itself is also a projective resolution of $A$ with augmentation 
$\varepsilon \otimes_{A} \varepsilon : P_{0} \otimes_{A} P_{0} \rightarrow A$, there exists an augmentation-preserving chain map $\Delta :\P \rightarrow \P \otimes_{A} \P$.
We call such a  chain map  a \textit{diagonal approximation}.
For any $A$-bimodules $M$ and $N$, we define a graded $k$-linear map
\[
\smile: \sHom_{\Ae}(\P, M) \otimes \sHom_{\Ae}(\P, N) \rightarrow \sHom_{\Ae}(\P, M \otimes_{A} N)
\]
by
\[
u \smile v := (u \otimes_{A} v) \Delta
\]
for homogeneous elements $u \in \sHom_{\Ae}(\P, M)$ and $v \in \sHom_{\Ae}(\P, N)$.
Since the diagonal approximation $\Delta$ is a chain map, 
we see that the map $\smile$ is a chain map. Thus it induces 
a well-defined operator
\[
\smile: \Hh^{*}(A, M) \otimes \Hh^{*}(A, N) \rightarrow \Hh^{*}(A, M \otimes_{A} N).
\]
For $u \in \Hh^{m}(A, M)$ and $v \in \Hh^{n}(A, N)$, we call $u \smile v \in \Hh^{m+n}(A, M \otimes_{A} N)$ the \textit{cup product} of $u$ and $v$. 
%
%
\begin{example} \label{example:1}
{\rm
Let $A$ be an algebra over a field $k$, 
and let $\overline{A}$ denote a quotient vector space $A/(k \cdot 1)$. 
For simplicity, we write $\overline{a}_{1, n}$ for 
$\overline{a}_{1} \otimes \cdots \otimes \overline{a}_{n} \in \overline{A}^{\otimes n}$.
The \textit{normalized bar resolution} $\mathrm{Bar}(A)$ of $A$ 
is the chain complex with 
$\mathrm{Bar}(A)_{n} := A \otimes \overline{A}^{\otimes n} \otimes A$ and differentials
\[
d_{n}^{\mathrm{Bar}(A)}(a_{0} \otimes \overline{a}_{1, n} \otimes a_{n+1})
=
\sum_{i=0}^{n} (-1)^{i} 
a_{0} \otimes \overline{a}_{1, i-1} \otimes \overline{a_{i} a_{i+1}} \otimes \overline{a}_{i+1, n} \otimes a_{n+1}.
\]
Then $\mathrm{Bar}(A)$ is a projective resolution of $A$.
It is known that the graded $\Ae$-linear map $\Delta: \mathrm{Bar}(A) \rightarrow \mathrm{Bar}(A) \otimes_{A} \mathrm{Bar}(A)$ 
given by
\begin{align*}
    \Delta(a_{0} \otimes&\,\overline{a}_{1, n} \otimes a_{n+1}) 
    :=
    \sum_{i=0}^{n} (a_{0} \otimes \overline{a}_{1, i} \otimes 1) \otimes_{A} (1 \otimes \overline{a}_{i+1, n} \otimes a_{n+1})
\end{align*}
is a chain map. Then we see that
\begin{align*}
    (u \smile v)&(a_{0} \otimes \overline{a}_{1, m+n} \otimes a_{m+n+1})\\
    &=
    (-1)^{m n} u(a_{0} \otimes \overline{a}_{1, m} \otimes 1) \otimes_{A} v(1 \otimes \overline{a}_{m, m+n} \otimes a_{m+n+1}) 
\end{align*}
for $u \in \sHom_{\Ae}(\mathrm{Bar}(A), M)^{m}$ and $v \in \sHom_{\Ae}(\mathrm{Bar}(A), N)^{n}$.
}
\end{example}
%
%

Consider the chain map
\[
\sHom_{\Ae}(\P, M) \otimes \sHom_{\Ae}(\P, \P \otimes_{A} N) \rightarrow  \sHom_{\Ae}(\P, M \otimes_{A} N)
\]
defined by $u \otimes v \mapsto (u \otimes_{A}\id_{M}) v$ for homogeneous elements $u \in \sHom_{\Ae}(\P, M)$ and 
$v \in \sHom_{\Ae}(\P, \P \otimes_{A} N)$.
Then it induces a well-defined operator, called the \textit{composition product},
\begin{align} \label{eq:15}
 \Hh^{*}(A, M) \otimes \Hh^{*}(\sHom_{\Ae}(\P, \P \otimes_{A} N)) \rightarrow  \Hh^{*}(A, M \otimes_{A} N).
\end{align}
Since $A$ is projective as a right $A$-module, the augmented chain complex $\P \xrightarrow{\varepsilon} A$ is contractible 
as a complex of right $A$-modules, so that the tensor product $\P \otimes_{A} N \rightarrow A \otimes_{A} N$ is acyclic, 
which means that $\varepsilon \otimes_{A} \id_{N}: \P \otimes_{A} N \rightarrow N$ is a quasi-isomorphism.
It follows from \cite[Chapter I, Theorem 8.5]{Brown94} that $\varepsilon \otimes_{A} \id_{N}$ induces an isomorphism 
\begin{align*} 
    \Hh^{n}(A, N) \cong \Hh^{n}(\sHom_{\Ae}(\P, \P \otimes_{A} N)).
\end{align*}
%
%
\begin{theo}[{\cite[Proposition 1.1]{BuchGreEdwaSnasSolb08}}] \label{theo:5}
    The cup product
\[
\smile: \Hh^{*}(A, M) \otimes \Hh^{*}(A, N) \rightarrow \Hh^{*}(A, M \otimes_{A} N).
\]
 coincides with the composition product $(\ref{eq:15})$ via the isomorphism above.
\end{theo}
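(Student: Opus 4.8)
The plan is to unwind both products on the level of cochain complexes and to verify that, after applying the quasi-isomorphism $\varepsilon \otimes_{A} \id_{N}$, the two chain maps agree. First I would set up notation: fix a diagonal approximation $\Delta : \P \rightarrow \P \otimes_{A} \P$, so that the cup product is $u \smile v = (u \otimes_{A} v)\Delta$, while the composition product is given by $u \otimes w \mapsto (u \otimes_{A} \id_{M})w$ for $w \in \sHom_{\Ae}(\P, \P \otimes_{A} N)$. The isomorphism of Brown's Theorem I.8.5 is induced by the quasi-isomorphism $\varepsilon \otimes_{A} \id_{N}: \P \otimes_{A} N \rightarrow N$; concretely, a class in $\Hh^{n}(\sHom_{\Ae}(\P, \P \otimes_{A} N))$ maps to $(\varepsilon \otimes_{A} \id_{N}) \circ w$ in $\Hh^{n}(A, N)$. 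So the statement amounts to: given $v \in \sHom_{\Ae}(\P, N)$ representing a class in $\Hh^{n}(A, N)$, any cocycle $w$ with $(\varepsilon \otimes_{A}\id_{N})w \sim v$ satisfies $(u \otimes_{A}\id_{M})w \sim u \smile v = (u \otimes_{A} v)\Delta$, up to the appropriate identification $M \otimes_{A}(A \otimes_{A} N) \cong M \otimes_{A} N$.

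Next I would exhibit a natural candidate for $w$ lifting $v$. The obvious one is $w := (\id_{\P} \otimes_{A} v)\Delta$, viewed as an element of $\sHom_{\Ae}(\P, \P \otimes_{A} N)$: it is a chain map because $\Delta$ is, and applying $\varepsilon \otimes_{A} \id_{N}$ gives $(\varepsilon \otimes_{A} \id_{N})(\id_{\P} \otimes_{A} v)\Delta = ((\varepsilon \otimes_{A} \id_{\P}) \otimes_{A}\text{(something)})$—more precisely, using that $\Delta$ is augmentation-preserving, $(\varepsilon \otimes_{A} \varepsilon)\Delta = \varepsilon$, one checks $(\varepsilon \otimes_{A}\id_{N})w = v \circ (\text{a chain map homotopic to }\id_{\P})$, hence $(\varepsilon \otimes_{A}\id_{N})w \sim v$. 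Then with this $w$ one computes directly
\[
(u \otimes_{A}\id_{M})w = (u \otimes_{A}\id_{M})(\id_{\P}\otimes_{A} v)\Delta = (u \otimes_{A} v)\Delta = u \smile v,
\]
where the middle equality is the functoriality of $\otimes_{A}$ on graded maps (with the Koszul sign bookkeeping built into the definition of $\otimes_{A}$ of graded maps recalled in Section~\ref{sec:1}). This identifies the two cochain maps on the nose for the specific lift $w$, and since the composition product \eqref{eq:15} and the isomorphism are well-defined on cohomology, the result follows for all lifts.

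The remaining point — and the place where care is needed — is the identification of coefficient bimodules and the signs. One must pin down the canonical isomorphism $M \otimes_{A}(A \otimes_{A} N) \xrightarrow{\sim} M \otimes_{A} N$ and verify that the square relating $(u \otimes_{A} \id_{M \otimes_{A} N})$-type maps to $(u \otimes_{A}\id_{M}) \otimes_{A}\id_{N}$-type maps commutes on the level of complexes, including the $(-1)^{|v||x|}$ sign in the definition of the tensor product of graded maps. I would handle this by checking commutativity of the relevant diagram of chain maps
\[
\sHom_{\Ae}(\P, M)\otimes \sHom_{\Ae}(\P, N) \longrightarrow \sHom_{\Ae}(\P, M)\otimes \sHom_{\Ae}(\P, \P\otimes_{A} N) \longrightarrow \sHom_{\Ae}(\P, M \otimes_{A} N),
\]
where the first arrow is $\id \otimes (\id_{\P}\otimes_{A}-)\Delta$ and the composite is $\smile$; functoriality of $- \otimes_{A} -$ gives commutativity, and passing to homology with Brown's isomorphism on the middle term yields the claim. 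The main obstacle is thus purely bookkeeping: making sure the lift $w = (\id_{\P}\otimes_{A} v)\Delta$ is genuinely a cocycle representing the class corresponding to $v$ under Brown's isomorphism, which is where one uses that $\Delta$ is augmentation-preserving together with the uniqueness-up-to-homotopy of chain maps lifting $\id_{A}$ (Lemma~\ref{lem:7}), rather than any subtle new idea.
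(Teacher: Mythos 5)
Your argument is correct and is essentially the one the paper itself uses for the Tate--Hochschild analogue (the proof of Theorem \ref{theo:3}(1)): there the quasi-inverse of $\sHom_{\Ae}(\T,\varepsilon'\otimes_{A}\id)$ is exactly your lift $v\mapsto(\id\otimes_{A}v)\widehat{\Delta}$, with $(\varepsilon\otimes_{A}v)\widehat{\Delta}\sim v$ justified via $[\varepsilon]=1$, which is the same content as your appeal to uniqueness of augmentation-preserving lifts. For Theorem \ref{theo:5} itself the paper only cites \cite[Proposition 1.1]{BuchGreEdwaSnasSolb08}, so your write-up supplies a proof along precisely the intended lines.
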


We now recall the definition of the \textit{cap product} between Hochschild cohomology and homology groups and 
the statement analogue to Theorem $\ref{theo:5}$.
Consider the chain map 
\[
\gamma : \sHom_{\Ae}(\P, M) \otimes ( (\P \otimes_{A} \P )\otimes_{\Ae} N ) \rightarrow \P \otimes_{\Ae} (M \otimes_{A} N)
\]
given by
\begin{align*}
    \gamma(u \otimes x \otimes_{A} y \otimes_{\Ae} n ) := (-1)^{|u||x|} x \otimes_{\Ae} u(y) \otimes_{A} n,
\end{align*} 
where $u \in \sHom_{\Ae}(\P, M)$ and $n \otimes_{\Ae} x \otimes_{A} y \in ( (\P \otimes_{A} \P )\otimes_{\Ae} N )$ 
are homogeneous elements and we use an isomorphism 
\begin{align*}
 (\P \otimes_{A} M )\otimes_{\Ae} N \cong \P \otimes_{\Ae}( M \otimes_{A} N).
\end{align*} 
Then the chain map
\[
\frown: \sHom_{\Ae}(\P, M) \otimes (\P \otimes_{\Ae} N) \rightarrow  \P \otimes_{\Ae} (M \otimes_{A} N)
\]
defined to be the composition of two chain maps $\gamma$ and
\begin{align*}
\id &\otimes (\Delta \otimes_{\Ae} \id): \\
&\sHom_{\Ae}(\P, M) \otimes (\P \otimes_{\Ae} N) \rightarrow \sHom_{\Ae}(\P, M) \otimes ( (\P \otimes_{A} \P) \otimes_{\Ae} N)
\end{align*}
gives rise to a well-defined operator
\[
\frown: \Hh^{m}(A, M) \otimes \Hh_{n}(A, N) \rightarrow \Hh_{n-m}(A, M \otimes_{A} N).
\]
For $u \in \Hh^{m}(A, M)$ and $w \in \Hh_{n}(A, N)$, we call $u \frown w \in \Hh_{n-m}(A, M \otimes_{A} N)$ the \textit{cap product} of $u$ and $w$. 

%
%
\begin{example} \label{example:2}
{\rm
Using the same projective resolution and diagonal approximation as in Example $\ref{example:1}$, we see that
\begin{align*}
    &u \frown w =
    (-1)^{m(p-m)} 
    (a_{0} \otimes \overline{a}_{1, p-m} \otimes 1) \otimes_{\Ae} 
    u(1 \otimes \overline{a}_{p-m+1, p+1}) \otimes_{A} n,
\end{align*}
for 
$w = (a_{0} \otimes \overline{a}_{1, p} \otimes a_{p+1}) \otimes_{\Ae} n 
\in (\mathrm{Bar}(A) \otimes_{\Ae} N)_{p}$ and
$u \in \sHom_{\Ae}(\mathrm{Bar}(A), M)^{m}$ 
with $p-m \geq 0$.
}
\end{example}
%
%

On the other hand, we define a chain map 
\begin{align}
\sHom_{\Ae}(\P, M) \otimes (\P \otimes_{\Ae} (\P \otimes_{A} N)) \rightarrow \P \otimes_{\Ae} (M \otimes_{A} N)
\label{eq:1}
\end{align}
by 
\[
u \otimes (x \otimes_{\Ae} (y \otimes_{A} n)) \mapsto (-1)^{|u||x|} x \otimes_{\Ae} (u(y) \otimes_{A} n)
\]
for homogeneous elements $u \in \sHom_{\Ae}(\P, M)$ and $(x \otimes_{A} y) \otimes_{\Ae} n  \in (\P \otimes_{A} \P) \otimes_{\Ae} N$. 
Moreover, it follows from \cite[Chapter I, Theorem 8.6]{Brown94} that there exists an isomorphism 
\begin{align} \label{eq:14}
\Hh_{*}(A, N) \cong \Hh_{*}(\P \otimes_{\Ae} (\P \otimes_{A} N)).
\end{align}
One proves the following in a similar way to the proof of Theorem $\ref{theo:3} (2)$ below. 

%
%
\begin{theo}
    The cap product
\[
\frown: \Hh^{*}(A, M) \otimes \Hh_{*}(A, N) \rightarrow \Hh_{*}(A, M \otimes_{A} N).
\] 
agrees with the product induced by the chain map $(\ref{eq:1})$ via the isomorphism $(\ref{eq:14})$.
\end{theo}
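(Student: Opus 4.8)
The plan is to mimic closely the proof strategy used for Theorem~\ref{theo:5} (the cup-product case), transported to the setting of cap product. Recall that the cup-product statement was proved by identifying $\smile$ with the composition product~(\ref{eq:15}) through the quasi-isomorphism $\varepsilon \otimes_{A} \id_{N}$. Here the analogous input is the isomorphism~(\ref{eq:14}), which comes from \cite[Chapter I, Theorem 8.6]{Brown94}: since $A$ is projective as a left $A$-module, the augmented complex $\P \xrightarrow{\varepsilon} A$ is contractible as a complex of left $A$-modules, so $\P \otimes_{A} N \xrightarrow{\varepsilon \otimes_{A} \id_{N}} N$ is a homotopy equivalence of complexes of $\Ae$-modules, and applying $\P \otimes_{\Ae} (-)$ preserves this quasi-isomorphism. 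So the first step is to record this homotopy equivalence explicitly and to fix the induced isomorphism~(\ref{eq:14}) on homology.

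Next I would set up the comparison at the chain level. The chain map $\frown$ on $\sHom_{\Ae}(\P, M) \otimes (\P \otimes_{\Ae} N)$ is, by definition, $\gamma \circ (\id \otimes (\Delta \otimes_{\Ae} \id))$, and the chain map~(\ref{eq:1}) on $\sHom_{\Ae}(\P, M) \otimes (\P \otimes_{\Ae} (\P \otimes_{A} N))$ is the evaluation-style map $u \otimes (x \otimes_{\Ae} (y \otimes_{A} n)) \mapsto (-1)^{|u||x|} x \otimes_{\Ae} (u(y) \otimes_{A} n)$. The point is that the diagonal approximation $\Delta : \P \to \P \otimes_{A} \P$ is, after tensoring, an augmentation-preserving map fitting into the commuting (up to homotopy) triangle
\begin{equation*}
\P \otimes_{\Ae} N \xrightarrow{\ \Delta \otimes_{\Ae} \id\ } (\P \otimes_{A} \P) \otimes_{\Ae} N \cong \P \otimes_{\Ae} (\P \otimes_{A} N) \xrightarrow{\ \id \otimes (\varepsilon \otimes_{A} \id)\ } \P \otimes_{\Ae} N,
\end{equation*}
whose composite is chain-homotopic to the identity (both are augmentation-preserving self-maps of the projective resolution-type complex $\P \otimes_{\Ae} N$, hence homotopic by Lemma~\ref{lem:7}, since the relevant acyclicity and projectivity hold in the appropriate range). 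So I would verify that $\id \otimes (\varepsilon \otimes_{A} \id)$ realizes the inverse of~(\ref{eq:14}), and then chase the two chain maps around this triangle: precomposing~(\ref{eq:1}) with $\id \otimes (\Delta \otimes_{\Ae} \id)$ on the $\P \otimes_{\Ae} N$ factor gives exactly $\frown$ by inspection of the formulas (the sign $(-1)^{|u||x|}$ matches, and the $\gamma$ map is precisely the evaluation map~(\ref{eq:1}) after the identification $(\P \otimes_{A} \P)\otimes_{\Ae} N \cong \P \otimes_{\Ae}(\P \otimes_{A} N)$).

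The main obstacle I anticipate is keeping the various associativity/commutativity isomorphisms among $\otimes_{A}$, $\otimes_{\Ae}$, and $\sHom_{\Ae}$ straight, together with the Koszul signs they carry; in particular, checking that the canonical isomorphism $(\P \otimes_{A} \P)\otimes_{\Ae} N \cong \P \otimes_{\Ae}(\P \otimes_{A} N)$ is compatible with the sign conventions built into $\gamma$ and into~(\ref{eq:1}), and that the homotopy produced by Lemma~\ref{lem:7} descends correctly after applying $\sHom_{\Ae}(\P, M) \otimes (-)$, which is exact in the relevant variable. Once these bookkeeping points are settled, the identification on homology is formal: the chain map~(\ref{eq:1}) composed with the chain-level isomorphism from~(\ref{eq:14}) equals $\frown$ up to a null-homotopic correction, so they induce the same operator $\Hh^{*}(A, M) \otimes \Hh_{*}(A, N) \to \Hh_{*}(A, M \otimes_{A} N)$. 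As noted in the excerpt, this runs in parallel to (and is in fact slightly simpler than) the proof of Theorem~\ref{theo:3}(2) that appears later, so I would cross-reference that argument rather than repeat the homotopy construction in full detail.
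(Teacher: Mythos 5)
Your strategy is essentially the one the paper intends: the paper itself only says the statement ``is proved in a similar way to the proof of Theorem~\ref{theo:3}(2)'', and your plan is precisely that argument transported to the Hochschild level --- build a weak inverse of $\id_{\P}\otimes_{\Ae}(\varepsilon\otimes_{A}\id_{N})$ out of the diagonal approximation and observe that precomposing the evaluation map $(\ref{eq:1})$ with $\id\otimes(\Delta\otimes_{\Ae}\id)$ recovers $\frown$ on the nose. One caveat: the homotopy you invoke cannot be obtained by applying Lemma~\ref{lem:7} to self-maps of $\P\otimes_{\Ae}N$, since that complex is neither termwise projective nor acyclic (its homology is $\Hh_{*}(A,N)$). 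Instead, as in the paper's proof of Theorem~\ref{theo:3}(2), note that the composite $\bigl(\id_{\P}\otimes_{\Ae}(\varepsilon\otimes_{A}\id_{N})\bigr)\circ(\Delta\otimes_{\Ae}\id_{N})$ is of the form $\sigma\otimes_{\Ae}\id_{N}$ with $\sigma=(\id_{\P}\otimes_{A}\varepsilon)\Delta\colon\P\rightarrow\P$ an augmentation-preserving chain map of the projective resolution over $\Ae$; Lemma~\ref{lem:7} applied to $\sigma$ gives $\sigma\sim\id_{\P}$, and tensoring that homotopy with $\id_{N}$ over $\Ae$ produces the homotopy you need. With that correction the remaining sign and associativity bookkeeping goes through as you describe.
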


\noindent Remark that the case for $M = N = A$ is proved by Armenta \cite[Section 4]{Armenta19}.

%
%
\section{Tate-Hochschild (co)homology groups of a self-injective algebra}
\label{sec:2}
Our aim in this section is to recall the definitions of complete resolutions over finite dimensional self-injective algebras and of Tate and Tate-Hochschild (co)homology groups.
Moreover, we provide a characterization of minimal complete resolutions.
Throughout this section, assume that $A$ is
a finite dimensional algebra over a field $k$. 
%
%
\subsection{Twisted bimodules}
Let us begin with the preparation for some notation.
We denote by $\Aut(A)$ the group of algebra automorphisms of $A$.
Note that any $\alpha \in \Aut(A)$ gives rise to $\alpha^\circ \in \Aut(A^{\circ})$ defined by
$\alpha^\circ (a^\circ) := \alpha(a)^\circ$ for any $a^\circ \in A^\circ$.
For an $A$-bimodule $M$ and two automorphisms $\alpha, \beta \in \Aut(A)$, 
we denote by $_\alpha M_\beta$ 
the $A$-bimodule which is $M$ as a $k$-module and whose $A$-bimodule structure is
given by $a*m*b := \alpha(a) m \beta(b)$ for $a, b \in A$ and $m \in {}_\alpha M_\beta$. 
We denote $_{1}M_\beta := {}_{\id}M_\beta$ and ${}_{\alpha}M_{1} := {}_{\alpha}M_{\id}$.
Recall that we can identify an $A$-bimodule $M$ with the left (right) 
$\Ae$-module $M$ of which the structure is defined by
$(a \otimes b^\circ) m:= amb \ (m (a \otimes b^\circ) := bma)$ for $a \otimes b^\circ \in \Ae$ 
and $m \in M$.
Using this identification, we have ${}_{\alpha \otimes \beta}M = {}_\alpha M_\beta = M_{\beta \otimes \alpha}$,
where we set 
$\alpha \otimes \beta := \alpha \otimes \beta^{\circ}$ and 
$\beta \otimes \alpha =: \beta \otimes \alpha^{\circ}$.
For any morphism $f: M \rightarrow N$ of $A$-bimodules and $\alpha$, $\beta \in \Aut(A)$,
there exists an isomorphism
\begin{align*}
\Hom_{\Ae}(M, N) \rightarrow \Hom_{\Ae}({}_\alpha M_\beta, {}_\alpha N_\beta);\ f \mapsto f,
\end{align*}
which is natural in both $M$ and $N$.

%
%
\subsection{Self-injective algebras and Frobenius algebras}
In this subsection, we recall the definitions of self-injective algebras and of Frobenius algebras.
Recall that a finite dimensional algebra $A$ is a \textit{self-injective algebra} if $A$ is injective as a left or as a right $A$-module, or
equivalently, $A$ is injective as a left and as a right $A$-module.
Moreover, recall that a finite dimensional $k$-algebra $A$ is a \textit{Frobenius algebra} if 
there exists a non-degenerate bilinear form $\langle -, - \rangle : A \otimes A \rightarrow k$ satisfying 
$\langle a b, c \rangle = \langle a, b c \rangle$ for $a, b$ and $c \in A$. 
The bilinear form gives rise to a left and a right $A$-module isomorphism
\begin{align}
\label{eq:2}
t_{1} : {}_{A}A \rightarrow {}_{A}D(A); \ x \mapsto \langle -, x \rangle, \\[3pt]
 A_{A} \rightarrow D(A)_{A}; \ x \mapsto \langle x, - \rangle, \nonumber
\end{align}
where the left and the right $A$-module structure of $D(A)$ is given by 
\begin{align*}
    (a f)(x) := f(x a) \mbox{ and } (f a)(x) := f(a x) 
\end{align*}
for $a, x \in A$ and $f \in D(A)$. 
In particular, a Frobenius algebra is a self-injective algebra.
Let $\{ u_{1}, \ldots, u_{r} \}$ be a $k$-basis of $A$. Then we have another $k$-basis $\{ v_{1}, \ldots, v_{r} \}$ such that 
$\langle v_{i}, u_{j} \rangle = \delta_{ij}$ for all $1 \leq i, j \leq r$, where $\delta_{i j}$ denotes the Kronecker delta. 
We call $\{ v_{i}\}_{i}$ the \textit{dual basis} of $\{ u_{i}\}_{i}$.
It is known that there exists an algebra automorphism $\nu$ of $A$ such that 
$\langle a, b \rangle = \langle b, \nu(a) \rangle$ for $a, b \in A$. 
The automorphism $\nu$ is unique, up to inner automorphism, and we call it the \textit{Nakayama automorphism} of $A$.
The Nakayama automorphism $\nu$ of $A$ makes the left $A$-module isomorphism (\ref{eq:2}) into an $A$-bimodule isomorphism 
\[
{}_{1}A_{\nu} \rightarrow D(A).
\]
Moreover, there exists another $A$-bimodule isomorphism 
\[
t_{2} : {}_{1}A_{\nu^{-1}} \rightarrow \Hom_{\Ae}({}_{\Ae}A, {}_{\Ae}\Ae) = A^{\vee}; \quad x \mapsto [y \mapsto \sum_{i} y u_{i} \nu(x) \otimes v_{i}],
\]
where the $A$-bimodule structure of $A^{\vee}$ is given by 
$(a f b )(x) := f(x) (b \otimes a^\circ)$
for $f \in A^{\vee}$ and $ a,b \in A$.

%
%
\subsection{Complete resolutions and their minimalities}
Our aim in this subsection is to recall the definition of complete resolutions and to characterize minimal complete resolutions in terms of minimal projective resolutions and minimal injective resolutions.
Let us start with the definition of complete resolutions.
%
%
\begin{defi}[\cite{AvraMart02}] ~\\[-15pt] 
\begin{enumerate}
\item[(1)]
        A complete resolution of a finitely generated $A$-module $M$ is a diagram 
        \[\T = \bigoplus_{i \in \Z} T_{i} \xrightarrow{\vartheta} \P \xrightarrow{\varepsilon} M,
        \]
        where $\T$ is an exact sequence of finitely generated projective $A$-modules with $\Hh^{i}(\T^{\vee}) =0$ for all $i \in \Z$, $\varepsilon :\P \rightarrow M$ is a projective resolution and $\vartheta : \T \rightarrow \P$ is a chain map such that 
        $\vartheta_{i}$ is an isomorphism for $i \gg 0$.
\item[(2)]
        A finitely generated $A$-module $G$ is \textit{totally reflexive} if the canonical morphism $G \rightarrow (G^{\vee})^{\vee}$ 
        is an isomorphism and $\Ext_{A}^{n} (G, A) = 0 = \Ext_{A^{\textrm{o}}}^{n}(G^{\vee}, A)$ for all $n \geq 1$. 
\item[(3)]
        A $\mathcal{G}$-resolution $($of length $\leq l)$ of a finitely generated $A$-module $M$ is a quasi-isomorphism $\mathbf{G} = \bigoplus_{i \geq 0} G_{i} \rightarrow M$ 
        with $G_{i}$ totally reflexive $($and $G_{i} = 0$ for $i >l)$.
\end{enumerate}
\end{defi}

We define the \textit{G-dimension} $\mathrm{G}$-$\dim_{A} M$ of a finitely generated $A$-module $M$ by
\[
\mathrm{G}\text{-}\dim_{A} M := \inf \left\{ g \in \N \big| \text{there exists a }\mathcal{G}\text{-resolution of length} \leq g \right\}.
\]
Since we are interested in self-injective algebras including Frobenius algebras, we mainly deal with self-injective algebras.
For more general cases, we refer to \cite{AvraMart02,BerghJorgen13}. 

Let $A$ be a self-injective algebra.
Since $A$ is injective as a left and as a right $A$-module, 
any finitely generated $A$-module $M$ is totally reflexive and hence of G-dimension $0$.
It follows from \cite[Theorem 3.1]{AvraMart02} that $M$ has a complete resolution $\T \xrightarrow{\vartheta} \P \rightarrow M$ 
with $\vartheta_{i}$ isomorphic for $i \geq 0$.
Thus, any complete resolution $\T$ of $M$ consists of some projective resolution of $M$ in non-negative degrees, and 
we have $M = \Coker d^{\T}_{1}$. 
Thus we simply write $\T \rightarrow M$ for $\T \rightarrow \P \rightarrow M$.

For a complete resolution $\T$ of $M$ and $i \in \Z$, 
let $d_{i}^{\T} = \iota_{i} \pi_{i}$ be the canonical factorization through $\Im d_{i}^{\T}$,
i.e., $\pi_{i}$ is the canonical epimorphism $T_{i} \rightarrow \Im d_{i}^{\T}$ and 
$\iota_{i}$ is the canonical inclusion $\Im d_{i}^{\T} \hookrightarrow T_{i-1}$.
In particular, we denote by $\varepsilon$ the epimorphism $\pi_{0}: T_{0} \rightarrow \Im d_{0}^{\T} = M$ and by $\eta$ the canonical inclusion 
$\iota_{0}: M \hookrightarrow T_{-1}$.
The morphism $\varepsilon: T_{0} \rightarrow M$ is called the \textit{augmentation} of $\T$.
Note that the augmentation of any complete resolution of $M$ induce a chain map $\T \rightarrow M$, but it is not a quasi-isomorphism.
A chain map $f: \T \rightarrow \T^{\prime}$ between two complete resolutions $\T \xrightarrow{\varepsilon} M$ and 
$\T^{\prime} \xrightarrow{\varepsilon^{\prime}} M$  is called \textit{augmentation-preserving} if $\varepsilon^{\prime} f = \varepsilon$.
It follows from \cite[Lemma 5.3]{AvraMart02} that any augmentation-preserving chain map $f: \T \rightarrow \T^{\prime}$ between two complete resolutions $\T$ and $\T^{\prime}$ of $M$ is a homotopy equivalence. 

In \cite{AvraMart02}, Avramov and Martsinkovsky introduced the notion of minimal complexes.
Recall that a chain complex $C$ over $A$ is \textit{minimal} if
every homotopy equivalence $C \rightarrow C$ is an isomorphism. 
Clearly, the minimality of a complex preserves under taking shifts. 
We will apply the notion to complete resolutions over a self-injective algebra.
%
%
\begin{defi}
Let $A$ be a self-injective algebra and $M$ a finitely generated $A$-module. 
Then a complete resolution $\T \rightarrow \P \rightarrow M$ is called \textit{minimal} if $\T$ is minimal.
\end{defi}
%
%
\noindent
Remark that our definition of minimal complete resolutions does not require the minimalities 
of the projective resolutions in non-negative degrees.
We now characterize minimal complete resolutions of $M$ in terms of its projective and injective resolutions. 
For this purpose, we first recall the result of Avramov and Martsinkovsky.
Recall that a projective resolution $\P \rightarrow M$
is a \textit{minimal projective resolution} if $P_{n}$ is a projective cover of $\Coker d_{n+1}^{\P}$ for all $n \geq 0$ and 
that an injective resolution $M \rightarrow \I = \bigoplus_{i \leq 0}I_{i}$ is 
a \textit{minimal injective resolution} if $I_{n}$ is an injective envelope of $\Ker d_{n}^{\I}$ for all $n \leq 0$.
%
%
\begin{lemma}[{\cite[Example 1.8]{AvraMart02}}] \label{lem:9}
Let $M$ be a finitely generated $A$-module, and let 
$\P \rightarrow M$ be a projective resolution and $M \rightarrow \I $ an injective resolution.
Then the following statements hold.
\begin{enumerate}
\item[(1)] 
    $\P$ is minimal if and only if $P_{n}$ is a projective cover of $\Coker d_{n+1}^{\P}$ for all $n \geq 0$.
\item[(2)] 
    $\I$ is minimal if and only if $I_{n}$ is an injective envelope of $\Ker d_{n}^{\I}$ for all $n \leq 0$.
\end{enumerate}
\end{lemma}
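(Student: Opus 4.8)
The plan is to reduce both parts to the standard description of minimal complexes of projective modules over a semiperfect ring, which is the one input I would not reprove in detail: a bounded-below complex $C$ of finitely generated projective $A$-modules is minimal if and only if $\Im d^{C}\subseteq\rad C$, equivalently the differential of $C/\rad C$ vanishes (see \cite{AvraMart02}). The ``if'' direction is a degreewise application of Nakayama's lemma: if $f,g\colon C\to C$ are mutually inverse up to homotopy, then reducing modulo $\rad C$ annihilates the homotopy correction terms $dh+hd$, since $\Im d^{C}\subseteq\rad C$ and $h$ is $A$-linear; hence $\bar f$ is an automorphism of the graded module $C/\rad C$, and therefore each $f_{n}\colon C_{n}\to C_{n}$ is an isomorphism. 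The ``only if'' direction is the contrapositive, obtained by splitting off from $C$ a contractible summand $0\to Q\xrightarrow{\id}Q\to 0$ whenever $\Im d^{C}\not\subseteq\rad C$; this splitting, which rests on the fact that idempotents lift modulo the radical of the finite-dimensional algebra $A$, is the genuinely substantive point and the main obstacle in a self-contained argument.

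Granting this, part (1) is a translation of language. For $n\geq 0$ one has $\Coker d_{n+1}^{\P}=P_{n}/\Im d_{n+1}^{\P}$, and the canonical epimorphism $P_{n}\twoheadrightarrow\Coker d_{n+1}^{\P}$ has kernel $\Im d_{n+1}^{\P}$; since $P_{n}$ is finitely generated projective over the semiperfect algebra $A$, this epimorphism is a projective cover precisely when its kernel is superfluous, that is, $\Im d_{n+1}^{\P}\subseteq\rad P_{n}$. Letting $n$ range over all non-negative integers (the complex $\P$ being concentrated in non-negative degrees), this says exactly that $\Im d^{\P}\subseteq\rad\P$, which by the characterization above is the minimality of $\P$.

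For part (2) I would pass to the $k$-linear duality $D=\Hom_{k}(-,k)$, an exact contravariant equivalence between finitely generated left and right $A$-modules that carries injective modules to projective modules and injective envelopes to projective covers. After the usual reindexing of a cochain complex as a chain complex, $D\I$ is then a bounded-below complex of finitely generated projective $A^{\circ}$-modules which is a projective resolution of $DM$, and $D$ identifies the homotopy equivalences $\I\to\I$ with those of $D\I$ (up to the natural isomorphism $\id\cong DD$), so $\I$ is minimal if and only if $D\I$ is. Since $D$ turns ``$I_{n}$ is an injective envelope of $\Ker d_{n}^{\I}$'' into ``$D(I_{n})$ is a projective cover of the corresponding $\Coker$ of $D\I$'', statement (2) for $(A,M)$ becomes statement (1) for $(A^{\circ},DM)$, already proved. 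Alternatively one can argue (2) directly, dual to (1), replacing $\rad$ and projective covers by $\soc$ and injective envelopes and Nakayama's lemma by its dual; the obstacle is then the dual splitting-off of a contractible injective summand $0\to J\xrightarrow{\id}J\to 0$.
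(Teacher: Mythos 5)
Your proof is correct, and in substance it runs parallel to the paper's: both hinge on the fact that for finitely generated projectives over a finite dimensional (hence semiperfect) algebra, ``projective cover'' means ``superfluous kernel,'' i.e.\ $\Im d\subseteq\rad$. The packaging differs slightly. For the direction ``projective covers $\Rightarrow$ minimal,'' the paper takes $f\sim\id_{\P}$ and argues degree by degree, using the lifting property of the projective covers $P_{n}\twoheadrightarrow\Coker d_{n+1}^{\P}$ to force each $f_{n}$ to be an epimorphism and hence an isomorphism; your reduction modulo $\rad$ followed by Nakayama is the same argument in different notation. For the converse, the paper invokes the classical decomposition $\P=\P_{M}\oplus\P^{\prime}$ (minimal resolution plus contractible summand) together with \cite[Proposition 1.7(3)]{AvraMart02} to conclude $\P^{\prime}=0$, whereas you phrase it as splitting off a contractible summand $0\to Q\xrightarrow{\id}Q\to 0$ whenever $\Im d\not\subseteq\rad$; these are the same device, and you are right to flag the idempotent-lifting step as the only substantive input, which both you and the paper ultimately outsource to \cite{AvraMart02}. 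The one genuine divergence is in part (2): the paper simply declares the proof ``similar,'' while your reduction via the exact duality $D=\Hom_{k}(-,k)$, which converts statement (2) for $(A,M)$ into statement (1) for $(A^{\circ},DM)$, is a cleaner and fully rigorous way to discharge that claim, and fits well with how the paper uses $D$ elsewhere.
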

\begin{proof}
we only prove $(1)$; the proof of $(2)$ is similar.
It follows from \cite[Proposition 1.7(1)]{AvraMart02} that $\P$ is minimal if and only if 
each chain map $f: \P \rightarrow \P$ homotopic to $\id_{\P}$ is an isomorphism. 
Take a chain map $f: \P \rightarrow \P$ such that $f \sim \id_{\P}$.
Then there exists a morphism $h_{0}: P_{0} \rightarrow P_{1}$ such that $\id_{P_{0}} - f_{0} = d_{1}h_{0}$.
Letting $\varepsilon$ be the augmentation $P_{0} \rightarrow M$, we have
$\varepsilon (\id -f_{0})= \varepsilon (d_{1}h_{0}) = 0$.
Since $\varepsilon$ is a projective cover of $M$, the morphism $f_{0}$ is an epimorphism and hence an isomorphism.
Moreover, it induces a commutative square
\[\xymatrix@M=6pt{
P_{1} \ar@{->>}[r] \ar[d]_-{f_{1}} &
\Coker d^{\P}_{2} \ar[d]^-{\overline{f_{1}}} \\
P_{1} \ar@{->>}[r] &
\Coker d^{\P}_{2}
}\]
where the  morphism $\overline{f_{1}}$ induced by $f_{1}$ is an isomorphism and 
the horizontal morphisms are the canonical epimorphisms.
Since the canonical epimorphism $P_{1} \rightarrow \Coker d^{\P}_{2}$ is a projective cover,   
the morphism $f_{1}$ is an isomorphism.
Inductively, we see that the morphism $f_{i}$ is an isomorphism for all $i \geq 0$. 

Conversely, suppose that $\P$ is minimal. 
Observe that any projective resolution $\P$ of $M$ can be decomposed as $\P = \P_{M} \oplus \P^{\prime}$, 
where $\P_{M}$ is a minimal projective resolution of $M$ and $\P^{\prime}$ is a contractible complex. 
It follows from \cite[Proposition 1.7(3)]{AvraMart02} that $\P^{\prime}$ must be zero. 
This completes the proof. 
\end{proof}
%
%

Let $A$ be a self-injective algebra and $\T$ a complete resolution of a finitely generated $A$-module $M$ 
with $d_{0}^{\T} = \eta \varepsilon$.
Let $\T_{\geq 0} := \bigoplus_{i \geq 0} T_{i}$ and $\T_{< 0} :=\bigoplus_{i < 0} T_{i}$ be 
the truncated subcomplexes of $\T$ with the inherited differentials, which are of the forms
\[\xymatrix@C=15pt@R=10pt{
    \T_{\geq 0}: \cdots \ar[r] & T_{2} \ar[r]^-{d_{2}} & T_{1} \ar[r]^-{d_{1}} & T_{0} \ar[r] & 0 \ar[r] & 0 \ar[r] & 0 \ar[r] & \cdots, \\
    \T_{< 0}: \cdots \ar[r] &  0  \ar[r] & 0 \ar[r] & 0 \ar[r] & T_{-1} \ar[r]^-{d_{-1}} & T_{-2} \ar[r]^-{d_{-2}} & T_{-3} \ar[r] & \cdots.
}\]
Note that $\T_{\geq 0}$ is a projective resolution of $M$ and that $\T_{< 0}$ is isomorphic to $\I[-1]$ 
for some injective resolution $\I$ of $M$.

%
%
\begin{prop} \label{prop:8}
Under the same notation above, the following statements are equivalent.
\begin{enumerate}
\item 
    $\T$ is a minimal complete resolution of $M$.
\item 
    $T_{n}$ is a projective cover of $\Coker d_{n+1}^{\T}$ for all $n \in \Z$.
\end{enumerate}
\end{prop}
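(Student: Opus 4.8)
The plan is to reduce both conditions to the single statement that the differential of $\T$ has image in the radical, namely $\Im d^{\T}_{n+1} \subseteq \rad T_{n}$ for all $n \in \Z$. First I would observe that $(2)$ is exactly this radical condition: the canonical epimorphism $T_{n} \twoheadrightarrow \Coker d^{\T}_{n+1}$ has kernel $\Im d^{\T}_{n+1}$, so $T_{n}$ is a projective cover of $\Coker d^{\T}_{n+1}$ precisely when $\Im d^{\T}_{n+1}$ is a superfluous submodule of the finitely generated projective module $T_{n}$, and over the Artin algebra $A$ a submodule of a finitely generated module is superfluous if and only if it is contained in the radical. I want to emphasise that one should \emph{not} try to deduce the proposition by splitting $\T$ into $\T_{\geq 0}$ and $\T_{<0}$ and applying Lemma \ref{lem:9} to each: the condition on $d^{\T}_{0}\colon T_{0}\to T_{-1}$ at the junction is genuinely part of the radical condition (a complete resolution whose two truncations are both minimal need not itself be minimal), so the argument has to be carried out uniformly in $n$.

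For $(2) \Rightarrow (1)$ I would use, as in the proof of Lemma \ref{lem:9}, that $\T$ is minimal if and only if every chain map $f\colon\T\to\T$ with $f\sim\id_{\T}$ is an isomorphism \cite[Proposition 1.7(1)]{AvraMart02}. Given such an $f$, write $\id_{\T}-f=d^{\T}h+hd^{\T}$ with $h$ a degree $1$, $A$-linear graded map. Since $\Im d^{\T}_{n+1}\subseteq\rad T_{n}$ for all $n$ and $h$ is $A$-linear (hence carries $\rad\T$ into $\rad\T$), both $d^{\T}h$ and $hd^{\T}$ have image inside $\rad\T$; therefore $f_{n}$ induces the identity on $T_{n}/\rad T_{n}$ for every $n$. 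By Nakayama's lemma each $f_{n}$ is surjective, and a surjective endomorphism of the finite dimensional module $T_{n}$ is an isomorphism. Hence $f$ is an isomorphism of complexes and $\T$ is minimal.

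For $(1) \Rightarrow (2)$ I would argue by contraposition. Suppose $\Im d^{\T}_{n+1}\not\subseteq\rad T_{n}$ for some $n$. Invoking the decomposition of a complex of finitely generated projective $A$-modules as $\T\cong\T^{\prime}\oplus\T^{\prime\prime}$ with $\Im d^{\T^{\prime}}\subseteq\rad\T^{\prime}$ and $\T^{\prime\prime}$ contractible --- the unbounded analogue of the decomposition used in the proof of Lemma \ref{lem:9}, which holds over any Artin algebra --- the hypothesis forces $\T^{\prime\prime}\neq 0$, since a nonzero contractible complex of projectives is a direct sum of pieces $0\to Q\xrightarrow{\ \sim\ }Q\to 0$ and hence does not have its differential inside the radical. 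A nonzero contractible direct summand contradicts minimality of $\T$ by \cite[Proposition 1.7(3)]{AvraMart02}: concretely, $\id_{\T^{\prime}}\oplus 0_{\T^{\prime\prime}}$ is homotopic to $\id_{\T}$ but is not an isomorphism. Thus $(1)$ fails, and the proof is complete.

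The only step that is more than a routine radical/Nakayama manipulation is the decomposition of the \emph{unbounded} complex $\T$ into a radical summand and a contractible summand, used in the last paragraph; this is standard over Artin algebras but is the point that needs to be stated or cited with care.
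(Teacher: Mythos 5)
Your proof is correct, but it follows a genuinely different route from the paper's. The paper proves $(1)\Rightarrow(2)$ by shifting ($\boldsymbol{\Sigma}^{-n}\T$ is a minimal complete resolution of $\Omega_{A}^{n}M$, so only the augmentation $T_{0}\to M$ needs to be treated) and then a direct lifting argument: any $f$ with $\varepsilon f$ epi yields a chain self-map $\varphi$ with $\varepsilon\varphi_{0}=\varepsilon$, which is a homotopy equivalence by \cite[Lemma 5.3]{AvraMart02}, hence an isomorphism by minimality. For $(2)\Rightarrow(1)$ the paper corrects $f\sim\id_{\T}$ in degrees $0,-1$ so that it preserves $\varepsilon$ and $\eta$, applies Lemma \ref{lem:9} to the two truncations, and finishes with an injective-envelope argument at the junction --- so your warning against the truncation strategy slightly mischaracterizes the paper, which does truncate but explicitly repairs degrees $0$ and $-1$; your observation that the junction genuinely matters (e.g.\ $0\to A\xrightarrow{\,\id\,}A\to 0$ has minimal truncations but is not minimal) is nonetheless correct and worth keeping. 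Your pivot through the radical condition buys a uniform, case-free proof of $(2)\Rightarrow(1)$ via Nakayama, which is cleaner than the paper's; the price is paid in $(1)\Rightarrow(2)$, where you invoke the decomposition of the \emph{unbounded} complex $\T$ into a radical-differential summand plus a contractible one. That decomposition is true over an Artin algebra (Gaussian elimination at each degree, noting that an elimination at position $n$ only modifies $T_{n},T_{n-1}$ and preserves radicality of neighbouring differentials, so the process stabilizes componentwise), but you should either prove it or note that you do not need it in full: if $\Im d_{n+1}^{\T}\not\subseteq\rad T_{n}$ for a single $n$, one Gaussian-elimination step at that position already splits off a nonzero summand $0\to Q\xrightarrow{\,\sim\,}Q\to 0$, and then $\id\oplus 0$ contradicts minimality exactly as you say. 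With that one citation made precise (or replaced by the single elimination step), the argument is complete and is a legitimate, somewhat more structural alternative to the paper's proof.
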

\begin{proof}
Assume that $\T$ is minimal.
Since the $(-n)$-shifted complex $\T[-n]$ is a minimal complete resolution of $\Omega_{A}^{n}(M)$
for any $n \in \Z$,
it suffices to show that the augmentation $\varepsilon :P_{0} \rightarrow M$ is a projective cover.
Let $N$ be an $A$-module and $f: N \rightarrow P_{0}$ be a morphism such that
the composite $\varepsilon f$ is an epimorphism.
The projectivity of $P_{0}$ implies that there exists a morphism $g: P_{0} \rightarrow N$ such that 
$\varepsilon = \varepsilon (f g)$. By Lemmas $\ref{lem:7}$ and $\ref{lem:8}$, the morphism $fg$ extends to a chain map
$\varphi: \T \rightarrow \T$ satisfying $\varepsilon \varphi_{0} = \varepsilon fg = \varepsilon$.
It follows from \cite[Lemma 5.3]{AvraMart02} that the chain map $\varphi$ is homotopy equivalent.
The minimality of $\T$ implies that $\varphi_{0}= f g$ is an isomorphism. Therefore, $f$ is an epimorphism.

Conversely, thanks to \cite[Proposition 1.7(1)]{AvraMart02}, 
it suffices to prove the converse for a chain map 
$f: \T \rightarrow \T$ such that $f \sim \id_{\T}$.
Take a homotopy $h$ from $\id_{\T}$ to $f$ and define a graded map $\varphi =( \varphi_{i} )_{i \in \Z}: \T \rightarrow \T$ by
\[
\varphi_{i}=
    \begin{cases}
    f_{i} & \mbox{ if $i \not = 0, -1$,} \\[3pt]
    f_{0} + h_{-1} d_{0}^{\T} & \mbox{ if $i = 0$,} \\[3pt]
    f_{-1} + d_{0}^{\T} h_{-1} & \mbox{ if $i = -1$}. 
    \end{cases}
\]
Then $\varphi$ is a chain map such that $\varepsilon \varphi_{0} = \varepsilon$.
Note that $\T_{\geq 0}$ is a minimal projective resolution of $M$.
Since the chain map $\varphi_{\geq 0}: \T_{\geq 0} \rightarrow \T_{\geq 0}$ is homotopy equivalent,
it follows from Lemma $\ref{lem:9}$(1) that each $\varphi_{i} = f_{i}$ with $i > 0$ is an isomorphism.
Since the inclusion $\iota_{n}$ with $n \leq 0$ is an injective envelope of $\Ker d_{n-1}^{\T}$,
Lemma $\ref{lem:9}$(2) and the fact that $\varphi_{-1} \eta = \eta$ yield that 
each $\varphi_{i}=f_{i}$ with $i < -1$ is an isomorphism. 
It remains to show that $f_{0}$ and $f_{-1}$ are isomorphisms.
Since $f_{1}$ is an isomorphism, so is the restriction $\widetilde{f_{0}}$ of $f_{0}$ to 
$\Ker d_{0}^{\T}$. In a commutative square
\[\xymatrix@C=25pt@M=8pt{
\Ker d_{0}^{\T} \ar@{^{(}->}[r]^-{\iota_{1}} \ar[d]^-{\widetilde{f_{0}}}&
T_{0} \ar[d]^-{f_{0}}
\\
\Ker d_{0}^{\T} \ar@{^{(}->}[r]^-{\iota_{1}} &
T_{0}
}\]
the fact that $\iota_{1}$ is an injective envelope of $\Ker d_{0}^{\T}$ implies that 
$f_{0}$ is a monomorphism and thus an isomorphism.
Similarly, one shows that $f_{-1}$ is an isomorphism.
\end{proof}
%
%

In the course of the proof of Proposition $\ref{prop:8}$, we have proved the following.

%
%
\begin{cor} \label{cor:6}
Let $A$ be a self-injective algebra and $M$ a finitely generated $A$-module. 
Then any minimal complete resolution of $M$ is isomorphic to the complete resolution of the form
\[\xymatrix@R=15pt{
\P \ar[rr] \ar[rd]_-{\varepsilon} \ar@{}[rrd]|(.45){\circlearrowright}& 
&
\boldsymbol{\Sigma}^{-1}\I \\
&
M \ar[ru]_-{-\eta}&
}\]
where $\P \xrightarrow{\varepsilon} M$ is a minimal projective resolution and $M \xrightarrow{\eta} \I$
is a minimal injective resolution.
\end{cor}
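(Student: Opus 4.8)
The plan is to note that Corollary~\ref{cor:6} is essentially a repackaging of the data established during the proof of Proposition~\ref{prop:8}, together with the splitting description of complete resolutions over a self-injective algebra given just before that proof. So the strategy is: start from an arbitrary minimal complete resolution $\T \to \P \to M$, split it into the truncations $\T_{\geq 0}$ and $\T_{<0}$, and identify each truncation with the relevant minimal resolution.

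First I would invoke the discussion preceding Proposition~\ref{prop:8}: $\T_{\geq 0}$ is a projective resolution of $M$ and $\T_{<0} \cong \boldsymbol{\Sigma}^{-1}\I$ for some injective resolution $\I$ of $M$, with the gluing datum being precisely $d_0^{\T} = \eta\varepsilon$ where $\varepsilon\colon T_0 \to M$ is the augmentation and $\eta\colon M \hookrightarrow T_{-1}$ the coaugmentation (the sign $-\eta$ in the statement comes from the convention $d^{\boldsymbol{\Sigma}^{-1}\I} = -d^{\I}$, matching the $i$-shift convention $d^{\boldsymbol{\Sigma}^i C} = (-1)^i d^C$ fixed in Section~\ref{sec:1}). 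Thus any complete resolution of $M$ already has the pushout/pullback shape displayed in the corollary; what remains is to upgrade ``projective resolution'' to ``minimal projective resolution'' and ``injective resolution'' to ``minimal injective resolution'' under the hypothesis that $\T$ is minimal.

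Next I would extract the minimality of the two halves. By Proposition~\ref{prop:8}, $\T$ minimal means $T_n$ is a projective cover of $\Coker d_{n+1}^{\T}$ for every $n \in \Z$; restricting to $n \geq 0$ gives, by Lemma~\ref{lem:9}(1), that $\T_{\geq 0}$ is a minimal projective resolution of $M$. For the negative half, one uses the injective (i.e.\ dual) half of Proposition~\ref{prop:8}: over a self-injective algebra $\Coker d_{n+1}^{\T} = \Ker d_{n-1}^{\T}$ up to the identifications $\Im d_n^{\T} = \Coker d_{n+1}^{\T}$, and the projective-cover condition for $n \leq -1$ dualizes (self-injectivity makes projective covers and injective envelopes interchangeable in the relevant sense, and this is exactly what the proof of Proposition~\ref{prop:8} used when it said $\iota_n$ is an injective envelope of $\Ker d_{n-1}^{\T}$). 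Hence $\T_{<0}$ is, up to the shift, a minimal injective resolution of $M$, so $\I$ above is minimal by Lemma~\ref{lem:9}(2). Assembling: $\T$ is isomorphic to the glued complex $\P \xrightarrow{\varepsilon} M \xrightarrow{-\eta} \boldsymbol{\Sigma}^{-1}\I$ with both $\P$ and $\I$ minimal, which is the assertion.

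The main obstacle I anticipate is bookkeeping rather than conceptual: one must be careful that the condition ``$T_n$ is a projective cover of $\Coker d_{n+1}^{\T}$'' for negative $n$ genuinely translates, via the self-injectivity of $A$ and the identification $\T_{<0}\cong\boldsymbol{\Sigma}^{-1}\I$, into ``$I_n$ is an injective envelope of $\Ker d_n^{\I}$''. This is where the hypothesis that $A$ is self-injective (rather than merely that $M$ has finite $\mathrm{G}$-dimension) is essential, since it is what makes the projective objects of $\T$ in negative degrees injective and lets projective covers become injective envelopes. A secondary point to check is that the isomorphism $\T \cong \P \oplus_M \boldsymbol{\Sigma}^{-1}\I$ is an isomorphism \emph{of complexes} (not merely a homotopy equivalence), but this is immediate since the gluing is along the honest equalities $T_0 = P_0$, $T_{-1} = I_{-1}$ and $d_0^{\T} = (-\eta)\circ\varepsilon$ built into the definition of the complete resolution; no homotopy-theoretic argument is needed once the minimality of the two halves is in hand.
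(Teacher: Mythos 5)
Your proposal is correct and follows essentially the same route as the paper: the paper likewise extracts Corollary~\ref{cor:6} directly from the proof of Proposition~\ref{prop:8} (all $T_n$ are projective covers of $\Coker d_{n+1}^{\T}$, which over a self-injective algebra gives injective envelopes $\iota_n$ in non-positive degrees), and then identifies the two truncations via Lemma~\ref{lem:9}. The only point treated equally tersely in both is the translation of the projective-cover condition into the injective-envelope condition in negative degrees, which you correctly flag as the place where self-injectivity is used.
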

%
%
It follows from Corollary $\ref{cor:6}$ that any finitely generated $A$-module admits a minimal complete resolution.
It follows from \cite[Proposition 1.7(2)]{AvraMart02} that 
a minimal complete resolution is uniquely determined up to isomorphism.
For a minimal complete resolution $\T$ of a finitely generated $A$-module $M$ and $i \in \Z$, 
we set $\Omega_{A}^{i} M := \Ker d_{i-1}^{\T}$.
Note that Corollary $\ref{cor:6}$ implies that the module $\Omega_{A}^{i} M$ is 
nothing but the syzygy module of $M$ if $i \geq 0$ and the cosyzygy module of $M$ if $i \leq -1$ 
(see \cite{SkowYama11} for (co)syzygy modules).

%
%
\subsection{Tate and Tate-Hochschild (co)homology groups}
In this subsection, we recall the definition of Tate and Tate-Hochschild (co)homology groups and show that there exists certain duality between Tate-Hochschild cohomology and homology groups. 
Recall that if $A$ is a self-injective (Frobenius) algebra, then so is the enveloping algebra $\Ae$. 
%
%
\begin{defi}
Let $A$ be a  self-injective algebra and $n \in \Z$.
\begin{enumerate}
\item[(1)]
        Let $M$ be a finitely generated left $A$-module with a complete resolution $\T^{M}$, 
        $L$ a finitely generated right $A$-module with a complete resolution $\T^{L}$ 
        and $N$ a left $A$-module.
        The $n$-th Tate cohomology group $\hExt_{A}^{n}(M, N)$ is defined by
        \[
        \hExt_{A}^{n}(M, N) := \Hh^{n}(\sHom_{A}(\T^{M}, N)),
        \]
        and the $n$-th Tate homology group $\hTor^{A}_{n}(M, N)$ is defined by 
        \[
        \hTor_{n}^{A}(L, N) := \Hh_{n}(\T^{L} \otimes_{A} N).
        \]
\item[(2)]
        The $n$-th Tate-Hochschild cohomology group $\hHh^{n}(A, N)$ of $A$ with coefficients in an $A$-bimodule $N$ is defined by
        \[
        \hHh^{n}(A, N) := \hExt_{\Ae}^{n}(A, N).
        \]
        The $n$-th Tate-Hochschild homology group $\hHh_{n}(A, N)$ of $A$ with coefficients in $N$ by 
        \[
        \hHh_{n}(A, N) :=\hTor_{n}^{\Ae}(A, N).
        \]
\end{enumerate}
\end{defi}

Let us recall the definitions of projectively stable categories and of stable tensor products: the \textit{projectively stable category} $A$-$\underline{\mathrm{mod}}$ of $A$ is the category
whose objects are finitely generated left $A$-modules and whose morphisms are given by the quotient space
\begin{align*}
    \underline{\Hom}_{A}(M, N) := \Hom_{A}(M, N)/\mathcal{P}(M, N),
\end{align*}
where $\mathcal{P}(M, N)$ is the space of morphisms factoring through a projective $A$-module.
The \textit{stable tensor product} $L \underline{\otimes}_{A} N$ of a finitely generated right $A$-module $L$ with a finitely generated left $A$-module $N$ is defined to be  
\begin{align*}
    L \underline{\otimes}_{A} N 
    := 
    \Bigg\{ w \in L \otimes_{A} N \ \Bigg|\  
    w \in \bigcap_{\substack{\iota:N \hookrightarrow I \\ I: {\footnotesize \mbox{injective}}}} \Ker( \id_{L} \otimes_{A} \iota) \Bigg\}.
\end{align*}
In case of Frobenius algebras, it follows from \cite[Proposition 2.1.3]{EuSched09} that there exists an isomorphism
\begin{align*}
    L \underline{\otimes}_{A} N 
    \cong 
    \Bigg\{ w \in L \otimes_{A} N \ \Bigg|\  
    w \in \bigcap_{\substack{\iota: L \hookrightarrow I \\ I: {\footnotesize \mbox{injective}}}} \Ker( \iota  \otimes_{A} \id_{N}) \Bigg\}.
\end{align*}
We will identify the two modules above via this isomorphism.
Remark that \cite[Proposition 2.1.3]{EuSched09} also holds for self-injective algebras, 
because the key of the proof is the projectivity of an injective module.

It is well-known that there exist isomorphisms
\begin{align*}
    &\Ext_{A}^{i}(M, N) \cong \underline{\Hom}_{A}(\Omega_{A}^{i}M, N), \quad
    \Tor_{i}^{A}(L, N) \cong \Omega_{A}^{i} L \underline{\otimes}_{A} N
\end{align*}
for $i \geq 1$ when $A$ is a self-injective algebra. 
It is known that there exist such isomorphisms even for Tate (co)homology groups.
We will include a proof.
%
%
\begin{prop} \label{prop:9}
Let $A$ be a self-injective algebra, $L$ a finitely generated right $A$-module and $M$ and $N$ finitely generated left $A$-modules.
Then there exist isomorphisms
\begin{align*}
    &\hExt_{A}^{i}(M, N) \cong \underline{\Hom}_{A}(\Omega_{A}^{i}M, N), \quad
    \hTor_{i}^{A}(L, N) \cong \Omega_{A}^{i} L \underline{\otimes}_{A} N
\end{align*}
for $i \in \Z$.
\end{prop}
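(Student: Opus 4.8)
The plan is to reduce both claimed isomorphisms to the corresponding well-known isomorphisms for ordinary $\Ext$ and $\Tor$ by passing to a suitable shift of a minimal complete resolution. Fix a minimal complete resolution $\T = \T^{M}$ of $M$. By Corollary~\ref{cor:6}, for each $i \in \Z$ the shifted complex $\T[-i]$ is again a minimal complete resolution, now of $\Omega_{A}^{i} M$, and its truncation $(\T[-i])_{\geq 0}$ is a genuine (minimal) projective resolution of $\Omega_{A}^{i} M$ by the structure of the decomposition in Corollary~\ref{cor:6}. Since homotopy equivalences of complete resolutions induce isomorphisms on the relevant (co)homology, and since by \cite[Lemma 5.3]{AvraMart02} any two complete resolutions of the same module are homotopy equivalent, $\hExt_{A}^{n}(M, N)$ and $\hTor_{n}^{A}(L, N)$ do not depend on the chosen complete resolution; this lets us work throughout with minimal ones.

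For the cohomology statement, first I would record the degree-shift identity $\hExt_{A}^{i}(M, N) \cong \hExt_{A}^{0}(\Omega_{A}^{i} M, N)$, which follows by applying $\sHom_{A}(-, N)$ to the isomorphism $\T[-i] \cong \T^{\Omega_{A}^{i} M}$ of complete resolutions and taking $0$-th cohomology (the sign in $d^{\boldsymbol{\Sigma}^{i} \T}$ is harmless for computing homology). Thus it suffices to prove $\hExt_{A}^{0}(M, N) \cong \underline{\Hom}_{A}(M, N)$ for all $M$. Here $\hExt_{A}^{0}(M, N) = \Hh^{0}(\sHom_{A}(\T, N)) = \Ker(d_{0}) / \Im(d_{-1})$ where $d_{0}\colon \Hom_{A}(T_{0}, N) \to \Hom_{A}(T_{1}, N)$ and $d_{-1}\colon \Hom_{A}(T_{-1}, N) \to \Hom_{A}(T_{0}, N)$ are the maps induced by the differentials of $\T$. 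A map $u\colon T_{0} \to N$ with $u \, d_{1}^{\T} = 0$ factors uniquely through $\varepsilon\colon T_{0} \to M$, giving $\Ker(d_{0}) \cong \Hom_{A}(M, N)$; under this identification the image of $d_{-1}$ consists of those maps $M \to N$ that factor through $\eta\colon M \hookrightarrow T_{-1}$. Since $T_{-1}$ is projective (it is a term of a complete resolution) and, conversely, any map $M \to P$ with $P$ projective factors through an injective envelope of $M$, hence through $\eta$ when we use the minimal injective resolution (invoking that $A$ is self-injective so projective and injective modules coincide), the submodule $\Im(d_{-1})$ is exactly $\mathcal{P}(M, N)$. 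This yields $\hExt_{A}^{0}(M, N) \cong \underline{\Hom}_{A}(M, N)$, and combined with the shift identity and the classical isomorphism $\underline{\Hom}_{A}(\Omega_{A}^{i} M, N) \cong \underline{\Hom}_{A}(\Omega_{A}^{i} M, N)$ (trivially) finishes the cohomology part; for $i \geq 1$ it also recovers the stated $\Ext_{A}^{i}(M, N) \cong \underline{\Hom}_{A}(\Omega_{A}^{i} M, N)$ as a consistency check.

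The homology statement is dual: using $\T[-i] \cong \T^{\Omega_{A}^{i} L}$ we get $\hTor_{i}^{A}(L, N) \cong \hTor_{0}^{A}(\Omega_{A}^{i} L, N)$, so it remains to identify $\hTor_{0}^{A}(L, N) = \Hh_{0}(\T^{L} \otimes_{A} N) = \Coker(d_{1}^{\T} \otimes \id) \big/ \text{(boundaries from degree }-1\text{)}$ with $L \underline{\otimes}_{A} N$. Tensoring $T_{1} \to T_{0} \to L \to 0$ with $N$ shows $\mathrm{Coker}(T_{1} \otimes_{A} N \to T_{0} \otimes_{A} N) \cong L \otimes_{A} N$; the further quotient by the image of $T_{0} \otimes_{A} N \gets T_{-1} \otimes_{A} N$ composed with the description $T_{-1} \cong \boldsymbol{\Sigma}^{-1} I_{0}$ for the minimal injective resolution term $I_{0}$, together with the characterization of the stable tensor product as the intersection over injective envelopes (and the identification of the two descriptions of $\underline{\otimes}_{A}$ via \cite[Proposition 2.1.3]{EuSched09}), shows that this quotient is precisely $L \underline{\otimes}_{A} N$. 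I expect the main obstacle to be this last identification: one must check carefully that the image of the map coming from $T_{-1}$ equals $\bigcap_{\iota\colon L \hookrightarrow I} \Ker(\iota \otimes_{A} \id_{N})$, i.e.\ that a single minimal injective envelope detects the whole intersection, which is exactly where the self-injectivity of $A$ and the structure of minimal complete resolutions from Corollary~\ref{cor:6} are used; the cohomological analogue (identifying $\Im d_{-1}$ with $\mathcal{P}(M,N)$) carries the same subtlety.
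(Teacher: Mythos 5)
Your cohomology half is correct and is essentially the paper's argument, just reorganized: the paper works directly in degree $i$ with the canonical factorization $d_i^{\T}=\iota_i\pi_i$ (defining $\Phi_i([f])=[f\pi_i]$ and checking it is bijective), whereas you first shift to reduce to degree $0$ and then run the same computation there. Your identification of $\Im d_{-1}$ with $\mathcal{P}(M,N)$ is the right mechanism and matches the paper: maps factoring through $\eta$ are exactly maps factoring through the projective $T_{-1}$, and conversely any map $M\to P$ with $P$ projective (hence injective) extends along the monomorphism $\eta$. The shift reduction is harmless since $\boldsymbol{\Sigma}^{-i}\T$ is indeed a (minimal) complete resolution of $\Omega_A^iM$.

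The homology half, however, contains a concrete error in the homological algebra. You describe $\Hh_0(\T^L\otimes_A N)$ as ``$\Coker(d_1^{\T}\otimes\id)$ modulo boundaries from degree $-1$'' and speak of ``the image of $T_0\otimes_A N \leftarrow T_{-1}\otimes_A N$.'' No such map exists (the differential goes $T_0\otimes_A N\to T_{-1}\otimes_A N$), and $\Hh_0$ is not a further quotient of $L\otimes_A N$: it is the \emph{submodule} $\Ker(d_0^{\T}\otimes\id)/\Im(d_1^{\T}\otimes\id)\subseteq \Coker(d_1^{\T}\otimes\id)\cong L\otimes_A N$, namely $\Ker(\eta\otimes_A\id_N)$, where $\eta\colon L\hookrightarrow T_{-1}$. (This is consistent with the paper's definition of $L\,\underline{\otimes}_A N$ as a submodule, not a quotient, of $L\otimes_A N$.) Once you correct the direction, the step you rightly flag as the crux goes through: $\Ker(\eta\otimes_A\id_N)$ equals the intersection over all embeddings $\jmath\colon L\hookrightarrow I$ with $I$ injective of $\Ker(\jmath\otimes_A\id_N)$, because any such $\jmath$ factors as $g\eta$ by injectivity of $I$. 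The paper avoids this bookkeeping by constructing the inverse map $\Psi_i\colon \Omega_A^iL\,\underline{\otimes}_A N\to\hTor_i^A(L,N)$ directly, lifting $w$ along the surjection $\pi_i\otimes_A\id_N$ and using injectivity of $T_{i-1}$ to see the lift is a cycle; you may find that formulation cleaner, but your route works after the sub/quotient confusion is repaired.
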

\begin{proof}
Let $i \in \Z$ be fixed. For the first isomorphism, let $\T$ be a minimal complete resolution of $M$. 
Take $f \in \Hom_{A}(\Omega_{A}^{i}M, N)$ and consider a commutative diagram
\[\xymatrix@R=10pt{
T_{i} \ar[rr]^-{d_{i}^{\T}} \ar[rd]_-{\pi_{i}} \ar[dd]_-{ f \pi_{i}}& 
&
T_{i-1} \\
&
\Omega_{A}^{i}M \ar[ur]_-{\iota_{i}} \ar[ld]^-{f}&
\\
N&
&
}\]
Obviously, $f\pi_{i}$ belongs to $\Ker d^{\sHom_{A}(\T, N)}_{i}$. 
If $f$ factors through a projective $A$-module $P$, then $f \pi_{i} \in \sHom_{A}(\T, N)^{i}$ is a coboundary, because the projective module $P$ is injective.
Thus we have a well-defined morphism
\begin{align*}
\Phi_{i}: \underline{\Hom}_{A}(\Omega_{A}^{i}M, N) \rightarrow \hExt_{A}^{i}(M, N)
\end{align*}
given by $[f] \mapsto [f \pi_{i}]$.
We claim that $\Phi_{i}$ is an isomorphism.
If $g \in \sHom_{A}(\T, N)^{i}$ lies in $\Ker d^{\sHom_{A}(\T, N)}_{i}$, then there uniquely exists a morphism 
$g^{\prime}: \Omega_{A}^{i}M \rightarrow N$ such that $g = g^{\prime} \pi_{i}$. This implies that $\Phi_{i}$ is an epimorphism.
Assume now that $f \pi_{i}$ is a coboundary, that is, $f \pi_{i} = (-1)^{i} f^\prime d^{\T}_{i}$ for some $f^\prime :T_{i-1} \rightarrow N$.
The surjectivity of $\pi_{i}$ yields that $(-1)^i f^\prime \iota = f$, which means that  $\Phi_{i}$ is a monomorphism.

For the second isomorphism, let $\T$ be a minimal complete resolution of $L$.
Consider a commutative diagram
\[\xymatrix@R=10pt@C=20pt{
T_{i+1} \otimes_{A} N \ar[rr]^-{d_{i+1}^{\T} \otimes_{A} \id} \ar[rd]_(.35){\pi_{i+1} \otimes_{A}\id }& 
&
T_{i} \otimes_{A} N \ar[rr]^-{d_{i}^{\T} \otimes_{A} \id} \ar[rd]_(.35){\pi_{i} \otimes_{A}\id} &
&
T_{i-1} \otimes_{A} N \\
& 
\Omega_{A}^{i+1}L \otimes_{A} N \ar[ru]_(.65){\iota_{i+1} \otimes_{A} \id } &
&
\Omega_{A}^{i}L \otimes_{A} N \ar[ru]_(.65){\iota_{i} \otimes_{A} \id } &
}\]
Let $w \in \Omega_{A}^{i} L \underline{\otimes}_{A} N$ be arbitrary.
Since $\pi_{i} \otimes_{\Ae} \id_{N}$ is an epimorphism, there exists $z \in T_{i} \otimes_{A} N$ such that 
$w =(\pi_{i} \otimes_{\Ae} \id_{N})(z)$.
By the definition of $\Omega_{A}^{i} L \underline{\otimes}_{A} N$ and the injectivity of $T_{i-1}$, we have
$(d_{i}^{\T} \otimes_{A} \id_{N})(z) = (\iota_{i} \otimes_{A} \id_{N})(w) = 0$.
If we take $z^\prime \in T_{i} \otimes_{A} N$ such that $w =(\pi_{i} \otimes_{\Ae} \id_{N})(z^\prime)$, then 
$z-z^\prime$ belongs to $\Im (d_{i+1}^{\T} \otimes_{A} \id_{N})$ because $z-z^\prime \in \Ker (\pi_{i} \otimes_{A} \id_{N})$.
Thus we have a well-defined morphism
\begin{align*}
\Psi_{i}: \Omega_{A}^{i} L \underline{\otimes}_{A} N  \rightarrow  \hTor_{i}^{A}(L, N)
\end{align*}
given by $w \mapsto [z]$.
It is easy to check that $\Psi_{i}$ is an isomorphism. 
\end{proof}
%
%

%
%
\begin{rem}  \label{rem:1}
{\rm
From Proposition $\ref{prop:9}$, we have isomorphisms
\begin{align*}
    \hHh^{i}(A, M) \cong \underline{\Hom}_{\Ae}(\Omega_{\Ae}^{i}A, M) \ \  \mbox{and}\ \  
    \hHh_{i}(A, M) \cong \Omega_{\Ae}^{i}A \underline{\otimes}_{\Ae} M.
\end{align*}
The vector spaces $\underline{\Hom}_{\Ae}(\Omega_{\Ae}^{i}A, M)$ and 
$\Omega_{\Ae}^{i}A \underline{\otimes}_{\Ae} M$  
are known to be the \textit{stable Hochschild cohomology and homology groups} defined by Eu and Shedler (see \cite{EuSched09}).
}
\end{rem}
%
%

The following lemma is the dual of \cite[Lemma 3.6]{BerghJorgen13}.
%
%
\begin{lemma} \label{lem:11}
Let $M$ and $N$ be $A$-modules and $\alpha \in \Aut(A)$. 
Then there exists an isomorphism
\[
\Ext_{A}^{i}(\,{}_{\alpha}M, N) \cong \Ext_{A}^{i}(M, {}_{\alpha^{-1}}N)
\] for all $i \geq 0$.
Moreover, if $A$ is a self-injective algebra, 
and $M$ is finitely generated, then there exists an isomorphism
\[
\hExt_{A}^{i}(\,{}_{\alpha}M, N) \cong \hExt_{A}^{i}(M, {}_{\alpha^{-1}}N)
\] for all $i \in \Z$.
\end{lemma}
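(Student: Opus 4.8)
The plan is to deduce both isomorphisms from a single tautological identification of $\Hom$-groups, transported through a (complete) projective resolution. First I would record the elementary properties of the left twisting functor ${}_{\alpha}(-)$ on left $A$-modules: it is exact, it is an auto-equivalence with quasi-inverse ${}_{\alpha^{-1}}(-)$, and the map $a \mapsto \alpha(a)$ is an isomorphism ${}_{A}A \xrightarrow{\ \sim\ } {}_{\alpha}A$ of left $A$-modules; consequently ${}_{\alpha}(-)$ preserves finite generation and projectivity, and sends projective resolutions to projective resolutions. The key point is then the observation that a $k$-linear map $f \colon M \to N$ is $A$-linear as a map ${}_{\alpha}M \to N$ if and only if it is $A$-linear as a map $M \to {}_{\alpha^{-1}}N$, since both conditions amount to $f(bm) = \alpha^{-1}(b)f(m)$ for all $b \in A$ and $m \in M$. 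Hence $\Hom_{A}({}_{\alpha}M, N) = \Hom_{A}(M, {}_{\alpha^{-1}}N)$ as $k$-modules, naturally in $M$ and $N$, and this equality is visibly compatible with precomposition by morphisms, hence with the differentials of any complex.

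For the first isomorphism I would pick a projective resolution $\P \to M$; then ${}_{\alpha}\P \to {}_{\alpha}M$ is a projective resolution, and the identification above, applied degreewise, gives an equality of cochain complexes $\Hom_{A}({}_{\alpha}\P, N) = \Hom_{A}(\P, {}_{\alpha^{-1}}N)$. Passing to cohomology yields $\Ext_{A}^{i}({}_{\alpha}M, N) \cong \Ext_{A}^{i}(M, {}_{\alpha^{-1}}N)$ for all $i \ge 0$; as the statement indicates, this is also dual to \cite[Lemma 3.6]{BerghJorgen13}. For the Tate part, assume $A$ self-injective and $M$ finitely generated, and let $\T \xrightarrow{\vartheta} \P \to M$ be a complete resolution. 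I would check that ${}_{\alpha}\T \xrightarrow{{}_{\alpha}\vartheta} {}_{\alpha}\P \to {}_{\alpha}M$ is again a complete resolution: ${}_{\alpha}\T$ is an exact complex of finitely generated projective modules by the first paragraph, the comparison map ${}_{\alpha}\vartheta$ is still an isomorphism in large degrees because ${}_{\alpha}(-)$ is a functor, and the vanishing $\Hh^{i}(({}_{\alpha}\T)^{\vee}) = 0$ holds because over a self-injective algebra $\Hom_{A}(-, A)$ is exact, so this condition is automatic for any acyclic complex of projectives (alternatively, $({}_{\alpha}\T)^{\vee} = \Hom_{A}(\T, {}_{\alpha^{-1}}A) \cong \T^{\vee}$ as complexes of abelian groups, using ${}_{A}A \cong {}_{\alpha^{-1}}A$). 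Applying the degreewise identification again gives $\Hom_{A}({}_{\alpha}\T, N) = \Hom_{A}(\T, {}_{\alpha^{-1}}N)$, and since Tate cohomology is independent of the chosen complete resolution (by \cite[Lemma 5.3]{AvraMart02}), taking cohomology yields $\hExt_{A}^{i}({}_{\alpha}M, N) \cong \hExt_{A}^{i}(M, {}_{\alpha^{-1}}N)$ for all $i \in \Z$.

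All of this is routine bookkeeping; the only step needing genuine attention is the verification that ${}_{\alpha}(-)$ carries complete resolutions to complete resolutions — in particular that it respects the vanishing of $\Hh^{*}(\T^{\vee})$ — which is precisely where self-injectivity is used, and which in fact renders that vanishing automatic.
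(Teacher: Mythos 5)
Your proposal is correct and follows essentially the same route as the paper: both arguments rest on the facts that ${}_{\alpha}(-)$ carries a projective (resp.\ complete) resolution of $M$ to one of ${}_{\alpha}M$ and that $\Hom_{A}({}_{\alpha}P, N)$ identifies naturally with $\Hom_{A}(P, {}_{\alpha^{-1}}N)$, then pass to cohomology. The only difference is that the paper outsources the resolution-preservation step to the proof of \cite[Lemma 3.6]{BerghJorgen13}, whereas you verify it directly (including the vanishing of $\Hh^{*}(({}_{\alpha}\T)^{\vee})$), which is a harmless and correct elaboration.
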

\begin{proof}
Let $\P$ be a projective resolution of $M$.
It follows from the proof of \cite[Lemma 3.6]{BerghJorgen13} that ${}_{\alpha}\P$ is a projective resolution of ${}_{\alpha}M$.
Thus we have isomorphisms
\begin{align*}
    \Ext_{A}^{i}(\,{}_{\alpha}M, N) &\cong
    \Hh^{i}(\sHom_{A}( _{\alpha}\P, N)) \\ &\cong
    \Hh^{i}(\sHom_{A}(\P, {}_{\alpha^{-1}}N))  \cong
    \Ext_{A}^{i}(M, {}_{\alpha^{-1}}N).
\end{align*}
Assume now that $A$ is a self-injective algebra and that $M$ is finitely generated.
The proof of \cite[Lemma 3.6]{BerghJorgen13} implies that 
if $\T$ is a complete resolution of $M$, then $ _{\alpha}\T$ is a complete resolution of ${}_{\alpha}M$.
Replacing $\P$ by $\T$ in the argument above yields the later statement.
\end{proof}
%
%

For any automorphisms $\alpha, \beta \in \Aut(A)$, 
the \textit{twisted complex} ${}_{\alpha}C_{\beta}$ is defined to be
the chain complex with components $({}_{\alpha}C_{\beta})_{n} = {}_{\alpha}(C_{n})_{\beta}$ and the inherited differentials.

In the rest of this paper, we assume that all complete resolutions of a self-injective algebra $A$ are taken over $\Ae$. 
Using the two truncations $\T_{\geq 0}$ and $\T_{< 0}$, 
we can write $\T$ as $\T_{\geq 0} \xrightarrow{d_{0}^{\T}} \T_{< 0}$ with $d_{0}^{\T} = \eta \varepsilon$.
The quasi-isomorphism $A \xrightarrow{\eta} \T_{< 0}$ is called a \textit{backward projective resolution} of $A$.

%
%
\begin{lemma} \label{lem:2}
Let $A$ be a Frobenius algebra with the Nakayama automorphism $\nu$ and $\T$ arbitrary complete resolution of $A$. 
Then there exist two projective resolutions 
$\P \xrightarrow{\varepsilon} A$ and $\P^{\prime} \xrightarrow{\varepsilon^{\prime}} A$ such that
$\T$ is isomorphic to $\P \xrightarrow{d_{0}} {}_{1}((\boldsymbol{\Sigma}\P^{\prime})^{\vee})_{\nu}$, 
where $d_{0}$ is the composition of $\varepsilon$ with $\eta^{\prime}:= (\varepsilon^{\prime})^{\vee} t_{2}$.
\end{lemma}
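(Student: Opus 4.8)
The plan is to use the structural description of complete resolutions over a self-injective algebra together with the Frobenius bimodule isomorphisms $t_1$ and $t_2$ recalled above. First I would invoke Corollary \ref{cor:6}: since an isomorphism of complete resolutions preserves all the relevant data, it suffices to prove the claim for a minimal complete resolution $\T$ of $A$ over $\Ae$, which by Corollary \ref{cor:6} has the form $\P \xrightarrow{-\eta \varepsilon} \boldsymbol{\Sigma}^{-1}\I$ with $\P \xrightarrow{\varepsilon} A$ a minimal projective resolution of $A$ as a bimodule and $A \xrightarrow{\eta} \I$ a minimal injective resolution. So $\T_{\geq 0} = \P$ and $\T_{<0} \cong \boldsymbol{\Sigma}^{-1}\I$, and it remains only to identify $\T_{<0}$ (equivalently $\I$) with the $A$-dual of a shifted projective resolution, twisted by $\nu$.

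The key step is the identification of the minimal injective resolution of $A$ as a bimodule. Here I would use that $\Ae$ is itself a Frobenius algebra with Nakayama automorphism $\nu \otimes \nu^{\circ}$ (the composite behaviour of Nakayama automorphisms under $\otimes$), so the duality functor $D(-) = \Hom_k(-,k)$ sends finitely generated projective $\Ae$-modules to finitely generated injective $\Ae$-modules and is exact. Applying $D$ to a minimal projective resolution $\P^{\prime} \xrightarrow{\varepsilon^{\prime}} A$ yields a minimal injective resolution $D(A) \xrightarrow{D(\varepsilon^{\prime})} D(\P^{\prime})$ of $D(A)$. Using the bimodule isomorphism ${}_{1}A_{\nu} \cong D(A)$ from (\ref{eq:2}) and its consequence, this produces a minimal injective resolution of ${}_{1}A_{\nu}$, hence (after twisting) of $A$ itself. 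To rewrite this in terms of the $A$-dual $(-)^{\vee}$ rather than the $k$-dual $D(-)$, I would combine the isomorphism $t_2 : {}_{1}A_{\nu^{-1}} \xrightarrow{\sim} A^{\vee}$ with the standard natural isomorphism $D(P) \cong P^{\vee} \otimes_{A} D(A)$ for $P$ projective, or more directly use that for a finitely generated projective $\Ae$-module $P$ one has $D(P) \cong {}_{1}(P^{\vee})_{\nu \otimes \nu}$ up to the appropriate twist; bookkeeping of the left/right twists via the identification ${}_{\alpha \otimes \beta}M = {}_{\alpha}M_{\beta}$ then yields $\I \cong {}_{1}((\boldsymbol{\Sigma}\P^{\prime})^{\vee})_{\nu}$ up to the degree shift coming from $\boldsymbol{\Sigma}$, and $\boldsymbol{\Sigma}^{-1}\I$ has the stated form.

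Finally I would check that the connecting differential $d_0^{\T}$ matches: by construction $d_0^{\T} = \eta\varepsilon$ where $\varepsilon : P_0 \to A$ is the augmentation of $\P$ and $\eta : A \hookrightarrow \I_0$ is the inclusion into the minimal injective resolution; under the identifications above, $\eta$ corresponds precisely to $\eta^{\prime} := (\varepsilon^{\prime})^{\vee} t_2$, the $A$-dual of the augmentation $\varepsilon^{\prime} : P_0^{\prime} \to A$ composed with $t_2$, viewed as a map $A \to ({}_{1}((\boldsymbol{\Sigma}\P^{\prime})^{\vee})_{\nu})_{-1} = (P_0^{\prime})^{\vee}$. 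This is essentially the statement that applying $(-)^{\vee}$ to the start of a projective resolution of $A$ computes the start of an injective copresentation, which is the self-injective/Frobenius analogue of the familiar fact that $\Ext^{0}_{\Ae}(A,A) = A$.

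I expect the main obstacle to be the careful tracking of the various one-sided twists by $\nu$ and $\nu^{-1}$: the $k$-dual, the $A$-dual, and the passage through $\Ae$ each introduce a Nakayama twist (on one or both sides), and one must verify that they combine to give exactly the single twist ${}_{1}(-)_{\nu}$ claimed, with no residual twist and with the shift $\boldsymbol{\Sigma}$ inserted in the right spot. The exactness and minimality assertions are routine given Lemma \ref{lem:9} and the fact that $D(-)$ is an exact duality interchanging projectives and injectives over a self-injective algebra; the content is entirely in the bimodule bookkeeping and the identification of $d_0^{\T}$ with $\eta \varepsilon = \eta^{\prime}\varepsilon$.
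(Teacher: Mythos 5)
The first step of your argument has a genuine gap. Lemma \ref{lem:2} is asserted for an \emph{arbitrary} complete resolution $\T$ of $A$ and claims an isomorphism of complexes, not merely a homotopy equivalence. Two complete resolutions of $A$ are homotopy equivalent but in general not isomorphic (one can always add a contractible summand of finitely generated projective bimodules in negative degrees and still have a complete resolution), so Corollary \ref{cor:6} does not let you reduce to the minimal case: your argument proves the statement only for the minimal complete resolution. The paper avoids this by never choosing $\P^{\prime}$ in advance: it manufactures $\P^{\prime}$ from the given $\T$ itself, setting $\P^{\prime} := {}_{\nu^{-1}}((\boldsymbol{\Sigma}\T_{<0})^{\vee})_{1} \xrightarrow{-\eta^{\vee}} {}_{\nu^{-1}}(A^{\vee})_{1} \cong A$ (a projective resolution of $A$, not necessarily minimal, since $(-)^{\vee}$ preserves exactness of acyclic complexes of finitely generated projectives), and then recovers $\T_{<0} \cong {}_{1}((\boldsymbol{\Sigma}\P^{\prime})^{\vee})_{\nu}$ from the double-duality $\sHom_{\Ae}(\sHom_{\Ae}(\T_{<0}, \Ae), \Ae) \cong \T_{<0}$ together with the cancellation of the two twists. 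If you insist on routing through minimal resolutions you would have to decompose an arbitrary $\T$ as a minimal one plus a contractible summand and handle the latter separately, at which point you are essentially forced back into the dualization argument.

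A secondary point: the part you defer as bookkeeping is in fact the entire content of the lemma, namely that the twists introduced by $D$, by $t_{2}$, and by the passage between the two one-sided $\Ae$-structures cancel to leave exactly ${}_{1}(-)_{\nu}$, and that $\eta$ is carried to $(\varepsilon^{\prime})^{\vee}t_{2}$. The paper's chain of isomorphisms ${}_{1}(({}_{\nu^{-1}}((\T_{<0})^{\vee})_{1})^{\vee})_{\nu} \cong \T_{<0}$, using $\Ae_{\id\otimes\nu} \cong {}_{\id\otimes\nu^{-1}}\Ae$, is precisely this computation carried out with the $A$-dual $(-)^{\vee}$ alone; inserting the $k$-dual $D$ and the identification $D(P) \cong P^{\vee}\otimes_{A}D(A)$ adds an extra layer of twists to track rather than removing one. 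Your overall idea --- that the negative part of $\T$ is the Nakayama-twisted dual of a shifted projective resolution --- is the right one, but as written the proposal neither covers the general $\T$ nor completes the twist verification.
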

\begin{proof}
It suffices to show that the backward projective resolution $A \xrightarrow{\eta} \T_{< 0}$ 
is obtained from some projective resolution in the desired way.

Take the $\Ae$-dual complex $(\boldsymbol{\Sigma} \T_{< 0})^{\vee}$ of the acyclic complex $\boldsymbol{\Sigma} \T_{< 0}$.
Twisting it by $\nu^{-1}$ on the left hand side,
we have a projective resolution
\[
\P^{\prime} := {}_{\nu^{-1}}((\boldsymbol{\Sigma}\T_{< 0})^{\vee})_{1} \xrightarrow{-\eta^{\vee}} {}_{\nu^{-1}}(A^{\vee})_{1} \cong A.
\]
Note that ${}_{\nu^{-1}}(T_{i}^{\vee})_{1}$ with $i \leq -1$ is a finitely generated projective $A$-bimodule.
Again, take the $\Ae$-dual complex 
$(\boldsymbol{\Sigma} \P^{\prime})^{\vee}$
of $\boldsymbol{\Sigma} \P^{\prime}$. 
Twisting it by $\nu$ on the right hand side, we get a backward projective resolution  
\begin{align*}
    A \xrightarrow{t_{2}} {}_{1}(A^{\vee})_{\nu} 
    \xrightarrow{\eta^{\vee \vee}} 
    {}_{1}((\boldsymbol{\Sigma} \P^{\prime})^{\vee})_{\nu}
    =
    {}_{1}(({}_{\nu^{-1}}((\T_{< 0})^{\vee})_{1})^{\vee})_{\nu}.
\end{align*}
This is isomorphic to $A \xrightarrow{\eta} \T_{< 0}$. 
Indeed, there are isomorphisms of complexes
\begin{align*}
{}_{1}(({}_{\nu^{-1}}((\T_{< 0})^{\vee})_{1})^{\vee})_{\nu} 
= &\ 
{}_{\id \otimes \nu}\sHom_{\Ae}(\sHom_{\Ae}(\T_{< 0}, \Ae)_{\id \otimes \nu^{-1}}, \Ae)\\
\cong &\ 
{}_{\id \otimes \nu}\sHom_{\Ae}(\sHom_{\Ae}(\T_{< 0}, \Ae), \Ae_{\id \otimes \nu})\\
\cong &\ 
{}_{\id \otimes \nu}\sHom_{\Ae}(\sHom_{\Ae}(\T_{< 0}, \Ae), {}_{\id \otimes \nu^{-1}}\Ae)\\
= &\ 
\sHom_{\Ae}(\sHom_{\Ae}(\T_{< 0}, \Ae), \Ae)\\
\cong &\ 
\T_{< 0},
\end{align*}
where the second isomorphism is induced by $\Ae$-bimodule isomorphism $\Ae_{\id \otimes \nu} \cong {}_{\id \otimes \nu^{-1}}\Ae$.
\end{proof}
%
%
We now recall the description of Tate-Hochschild (co)homology groups:
let $\T$ be a complete resolution of a Frobenius algebra $A$, $M$ an $A$-bimodule,
$C^{+} := \sHom_{\Ae}(\T_{\geq 0}, M)$ and $C^{-} := \sHom_{\Ae}(\T_{< 0}, M)$.
Since $\T_{\geq 0}$ is a projective resolution of $A$, we have $\Hh^{i}(C^{+}) = \Hh^{i}(A, M)$ for $i \geq 0$.
It follows from  Lemma \ref{lem:2} that $\T_{< 0} \cong {}_{1}((\boldsymbol{\Sigma} \P)^{\vee})_{\nu}$ for some projective resolution $\P$ of $A$.
Thus, we have
\begin{align*}
    C^{-} &= \sHom_{\Ae}(\T_{< 0}, M) 
            \cong \sHom_{\Ae}({}_{1}((\boldsymbol{\Sigma} \P)^{\vee})_{\nu}, M) \\[3pt]
            &\cong \sHom_{\Ae}((\boldsymbol{\Sigma} \P)^{\vee}, {}_{1}M_{\nu^{-1}})
            \cong \boldsymbol{\Sigma} \P \otimes_{\Ae} {}_{1}M_{\nu^{-1}}
\end{align*}
and hence $\Hh^{i}(C^{-}) \cong \Hh_{-i-1}(A, {}_{1}M_{\nu^{-1}})$ for $i \leq -1$.
Therefore, we get
\begin{align*}
    \hHh^{n}(A, M)
    =\begin{cases}
        \Hh^{n}(A, M) & \mbox{ if } n > 0, \\
        \Hh_{-n-1}(A, {}_{1}M_{\nu^{-1}}) & \mbox{ if } n < -1.
    \end{cases}
\end{align*}
For the other two cohomology groups, there are isomorphisms
\begin{align} \label{eq:5}
\hHh^{-1}(A, M) \cong ({}_{N_{A}}M)/ I_{A}(M), \qquad \hHh^{0}(A, M) \cong M^{A}/N_{A}(M),
\end{align}
where we set
		\begin{align*} \lefteqn{}
			& M^{A} := \{ \, m \in M \ | \ a m = m a \mbox{ for all } a \in A \}, \\[5pt]
			&N_{A}(M) := \bigg \{ \,  \sum_{i} u_{i} m v_{i} \ \bigg| \ m \in M \bigg \}, \quad
			{}_{N_{A}}M := \left \{ \, m \in M \ \bigg| \ \sum_{i} u_{i} m v_{i} = 0 \right \}, \\
			&I_{A}(M) :=  \left \{ \, \sum_{j}(m_{i} \nu^{-1}(a_{i}) -a_{i} m_{i}) \ \bigg| \ m_{i} \in M, a_{i} \in A \right\}.
		\end{align*}

The vector spaces $({}_{N_{A}}M)/ I_{A}(M)$ and $M^{A}/N_{A}(M)$ appear in the following exact sequence
\begin{align} \label{eq:3}
    0 \rightarrow 
    {}_{N_{A}}M/ I_{A}(M) \rightarrow 
    \Hh_{0}(A, {}_{1}M_{\nu^{-1}}) \xrightarrow{\overline{N}} 
    \Hh^{0}(A, M) \rightarrow 
    M^{A}/N_{A}(M) \rightarrow 
    0,
\end{align}
where $\overline{N}$ is induced by the norm map $N: {}_{1}M_{\nu^{-1}} \rightarrow M$ defined in \cite{Nakayama57,Sanada92} which sends $m \in {}_{1}M_{\nu^{-1}}$ to $N(m) = \sum_{i} u_{i} m v_{i}$.
In order to prove the existence of the exact sequence $(\ref{eq:3})$, without loss of generality, we may suppose that 
the beginning of $\T$ is of the form
\begin{align*} 
\xymatrix@C=15pt@R=10pt@M=4pt{
\cdots \rightarrow
T_{2} \ar[r] &
A^{\otimes 3} \ar[r]^{d_{1}} &
A^{\otimes 2} \ar[rr]^{d_{0}} \ar[rd]_{\varepsilon} &
&
{}_{1}(A^{\otimes 2})_{\nu} \ar[r]^-{d_{-1}}  &
{}_{1}((A^{\otimes 3})^{\vee})_{\nu} \rightarrow
\cdots \\
&
&
&
A \ar[ru]_{\eta} &
&
}
\end{align*}
where the maps above are given by as follows: 
\begin{align*}
    &d_{1}: A^{\otimes 3} \rightarrow A^{\otimes 2}; \quad a \otimes b \otimes c \mapsto a b \otimes c -a \otimes bc, \\
    &\varepsilon: A^{\otimes 2} \rightarrow A; \quad x \otimes y \mapsto xy, \\
    &\eta : A \rightarrow {}_{1}(A^{\otimes 2})_{\nu}; \quad x \mapsto \sum_{i} u_{i} \nu(x) \otimes v_{i}, \\
    &d_{-1}: {}_{1}(A^{\otimes 2})_{\nu} \rightarrow {}_{1}((A^{\otimes 3})^{\vee})_{\nu}; 
    \quad x \otimes y \mapsto [1 \otimes a \otimes 1 \mapsto ax \otimes y -x \otimes ya].
\end{align*}
A direct calculation shows that there exists a morphism $\overline{N}: \Hh_{0}(A, {}_{1}M_{\nu^{-1}}) \rightarrow \Hh^{0}(A, M)$ making the following square
commute:
\[\xymatrix@C=19pt@R=19pt@M=8pt{
{}_{1}M_{\nu^{-1}} \ar[r]^-{N} \ar@{->>}[d] &
M \\
\Hh_{0}(A, {}_{1}M_{\nu^{-1}}) \ar[r]^-{\overline{N}} &
\Hh^{0}(A, M) \ar@{>->}[u] 
}\]
Since $N :{}_{1}M_{\nu^{-1}} \rightarrow M$ is the $0$-th differential of $\sHom_{\Ae}(\T, M)$, we get the isomorphisms (\ref{eq:5}) and the exact sequence (\ref{eq:3}).

Using Lemma $\ref{lem:11}$, one analogously checks that the Tate-Hochschild homology groups $\hHh_{*}(A, M)$ can be written as follows: 
\begin{align*}
    \hHh_{n}(A, M)
    =\begin{cases}
        \Hh_{n}(A, M) & \mbox{ if } n > 0, \\[3pt]
        \Hh^{-n-1}(A, {}_{1}M_{\nu}) & \mbox{ if } n < -1,    \end{cases}
\end{align*}
and there exists an exact sequence
\begin{align*}
    0 \rightarrow \hHh_{0}(A, M) \rightarrow \Hh_{0}(A, M) \xrightarrow{\overline{N^{\prime}}} \Hh^{0}(A, {}_{1}M_{\nu}) \rightarrow \hHh_{-1}(A, M) \rightarrow 0,
\end{align*} 
where $\overline{N^{\prime}}$ is induced by the morphism $N^{\prime}: M \rightarrow {}_{1}M_{\nu}$ sending $m \in M$ to 
\begin{align*}
   \sum_{i} u_{i} m \nu(v_{i}) \in {}_{1}M_{\nu}.
\end{align*}
Therefore, we have an isomorphisms 
\begin{align*}
    \hHh^{n}(A, M) \cong \hHh_{-n-1}(A, {}_{1}M_{\nu^{-1}})
\end{align*} 
for $n \in \Z$ and an $A$-bimodule $M$.

We end this section by recalling the definition of weakly projective bimodules in the sense of Sanada \cite{Sanada92}.
For bimodules $M$ and $N$ over a Frobenius algebra $A$, the space $\Hom_{-,A}(M, N)$ of morphisms of right $A$-modules becomes an $A$-bimodule by defining
\[
(a g b)(m) := a g( b m )
\]
for $a, b \in A, g \in \Hom_{-,A}(M, N)$  and $m \in M$. 
Similarly, the space $\Hom_{A,-}(M, N)$ of morphisms of left $A$-modules becomes an $A$-bimodule by defining
\[
(a g b)(m) := g( m a ) b
\]
for $a, b \in A, g \in \Hom_{A,-}(M, N)$ and $m \in M$. 
Then we see that 
\[
\sum_{i} u_{i} g v_{i} \in \Hom_{\Ae}(M, N)
\]
for all $g \in \Hom_{-,A}(M, N)$, or $g \in \Hom_{A,-}(M, N)$,
where $\{u_{i}\}_{i}$ and $\{v_{i}\}_{i}$ are dual bases of $A$.
We say that an $A$-bimoudle $M$ is \textit{weakly projective}  if
there exists either $g \in \Hom_{-,A}(M, M)$ or $g \in \Hom_{A,-}(M, M)$ such that 
\[
\sum_{i} u_{i} g v_{i} = \id_{M} \in \End_{\Ae}(M).
\]
For an $A$-bimodule $M$, Sanada provided four exact sequences of $A$-bimodules 
which split as exact sequences of one-sided $A$-modules and whose middle terms are weakly projective:
\begin{align*}
    &0 \rightarrow K(M) \rightarrow A \otimes M \rightarrow M \rightarrow 0 \quad \mbox{(right $A$-splitting),}\\
    &0 \rightarrow K^{\prime}(M) \rightarrow M \otimes A \rightarrow M \rightarrow 0 \quad \mbox{(left $A$-splitting),}\\
    &0 \rightarrow M \rightarrow \Hom_{k}(A_{A}, M_{A}) \rightarrow C(M) \rightarrow 0 \quad \mbox{(right $A$-splitting),}\\
    &0 \rightarrow M \rightarrow \Hom_{k}({}_{A}A, {}_{A}M) \rightarrow C^{\prime}(M) \rightarrow 0 \quad \mbox{(left $A$-splitting).}
\end{align*}

%
%
\begin{lemma}[{\cite[Lemma 1.3]{Sanada92}}] \label{lem:6}
Let $A$ be a Frobenius algebra.
If an $A$-bimodule $M$ is weakly projective, then $\hHh^{i}(A, M)$ vanishes for all $i \in \Z$.
\end{lemma}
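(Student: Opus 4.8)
The plan is to reduce the statement to the behavior of a single truncation of a complete resolution, using the Tate-Hochschild cohomology description recalled just before the lemma. Since $\hHh^{n}(A,M) = \Hh^{n}(\sHom_{\Ae}(\T, M))$ for \emph{any} complete resolution $\T$ of $A$ and this is independent of the choice of $\T$, it suffices to exhibit one complete resolution for which the relevant complex is exact in every degree. Sanada's four exact sequences provide the natural source of such a resolution: by hypothesis $M$ is weakly projective, so one of them, say $0 \to K(M) \to A \otimes M \to M \to 0$, has a weakly projective middle term $A \otimes M$ --- but more to the point, the existence of $g$ with $\sum_i u_i g v_i = \id_M$ in $\End_{\Ae}(M)$ means $\id_M$ factors (up to the averaging operator) through a one-sided identity, and the standard argument shows weakly projective bimodules behave like projective-injective objects for the purposes of Tate cohomology.

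\textbf{Key steps.} First I would recall that, over a Frobenius algebra, an $A$-bimodule is weakly projective if and only if it is a direct summand of a bimodule of the form $A \otimes V \otimes A$ for some $k$-module $V$ (equivalently, projective as an $\Ae$-module, since $A$ Frobenius forces projective $=$ injective $=$ projective-injective in $\Ae$-mod); this is essentially \cite[Lemma 1.1]{Sanada92}-type reasoning via the element $g$ and the dual bases $\{u_i\},\{v_i\}$. Second, for a projective (hence injective) $\Ae$-module $P$, the complex $\sHom_{\Ae}(\T, P)$ is exact: indeed $\T$ is an acyclic complex of projectives, and applying $\Hom$ into an injective module preserves exactness, so $\Hh^i(\sHom_{\Ae}(\T, P)) = 0$ for all $i \in \Z$. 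Third, conclude that $\hHh^i(A, P) = 0$ for every $i$ when $P$ is projective-injective over $\Ae$, and then transport this to $M$: since $M$ is a direct summand of such a $P$ and $\hHh^i(A, -)$ is additive, $\hHh^i(A, M)$ is a direct summand of $\hHh^i(A, P) = 0$, hence vanishes for all $i \in \Z$.

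\textbf{Alternative route and main obstacle.} If one prefers to stay closer to Sanada's formulation without invoking the summand-of-$A\otimes V\otimes A$ characterization, one can argue directly: the averaging operator $M \mapsto \sum_i u_i(-)v_i$ sends $\Hom_{-,A}(M,M)$ (or $\Hom_{A,-}(M,M)$) into $\End_{\Ae}(M)$, and weak projectivity says $\id_M$ lies in its image; a standard dimension-shifting/homotopy argument --- the same one that shows relatively projective modules are acyclic for relative cohomology --- then produces a contracting homotopy on $\sHom_{\Ae}(\T, M)$ after applying the averaging trick term by term. The main obstacle is making precise that the one-sided splitting (right-$A$ or left-$A$) of the relevant Sanada sequence interacts correctly with $\sHom_{\Ae}(\T, -)$: one must check that $\T$, being a complex of finitely generated projective $\Ae$-modules, is in particular a complex that splits appropriately as a complex of one-sided modules, so that $\sHom_{-,A}(\T, M)$ or $\sHom_{A,-}(\T, M)$ is acyclic, and then that the averaging operator intertwines this acyclicity with that of $\sHom_{\Ae}(\T, M)$. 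I expect this bookkeeping with the one-sided versus two-sided structures --- rather than any deep idea --- to be the delicate point; everything else is formal once weak projectivity is translated into "projective-injective summand over $\Ae$."
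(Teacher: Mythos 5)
The paper gives no proof of this lemma---it is quoted directly from Sanada---so I am judging your argument on its own merits. Your primary route fails at its very first step: the claimed equivalence ``weakly projective $\Leftrightarrow$ direct summand of $A\otimes V\otimes A$ $\Leftrightarrow$ projective over $\Ae$'' is false. Weak projectivity in Sanada's sense is \emph{relative} (Higman-type) projectivity with respect to the one-sided module structures, which is strictly weaker than $\Ae$-projectivity. Concretely, take $A=k[x]/(x^{2})$ and let $M$ be $A$ with the regular left action and with the right action pulled back along the augmentation $A\to k$. With $u_{1}=1$, $u_{2}=x$ and dual basis $v_{1}=x$, $v_{2}=1$, the $k$-linear map $g$ with $g(1)=0$, $g(x)=1$ lies in $\Hom_{-,A}(M,M)$ (the right action of $x$ on $M$ is zero) and satisfies $\sum_{i}u_{i}g(v_{i}m)=g(xm)+xg(m)=m$, so $M$ is weakly projective; yet $M\cong k^{2}$ as a right $A$-module is not projective on that side, whereas every projective $\Ae$-module restricts to a projective one-sided module, so $M$ is not $\Ae$-projective. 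A structural way to see the same divergence: the middle terms $A\otimes M$ of Sanada's four sequences are weakly projective for \emph{every} bimodule $M$ (this is exactly what drives the dimension shifting in Corollary~\ref{cor:2}), while $A\otimes M\cong \Ae\otimes_{A^{\circ}}M$ is $\Ae$-projective only when $M$ is projective as a right $A$-module. Your second and third steps are fine, but they prove the vanishing only for $\Ae$-projective coefficients, a strictly smaller class, so the lemma does not follow from them.

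Your ``alternative route'' is the correct argument (and is essentially Sanada's): $\T$ restricted to either side is split exact (induct on the sequences $0\to Z_{i}\to T_{i}\to Z_{i-1}\to 0$ starting from $Z_{-1}=A$, which is both projective and injective as a one-sided module; the paper invokes this contractibility in the proof of Lemma~\ref{lem:3}), hence $\sHom_{-,A}(\T,M)$ is contractible. If $\sum_{i}u_{i}g_{0}v_{i}=\id_{M}$, then every $\Ae$-linear $f\colon T_{n}\to M$ satisfies $f=\sum_{i}u_{i}(g_{0}f)v_{i}$; for a cocycle $f$ the right $A$-linear map $g_{0}f$ is a cocycle in the contractible complex $\sHom_{-,A}(\T,M)$, say $g_{0}f=\pm h\,d_{n}$, and averaging gives $f=\bigl(\sum_{i}u_{i}hv_{i}\bigr)d_{n}$ with $\sum_{i}u_{i}hv_{i}$ now $\Ae$-linear, so $f$ is a coboundary. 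You leave this route as an unexecuted sketch, and you close by asserting that everything becomes formal ``once weak projectivity is translated into projective-injective summand over $\Ae$''---which is precisely the false translation. As written, the proposal does not establish the lemma.
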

%
%

%
%
\begin{cor} \label{cor:1}
Let $A$ be a Frobenius algebra.
If an $A$-bimodule $M$ is weakly projective, then $\hHh_{i}(A, M)$ vanishes for all $i \in \Z$.
\end{cor}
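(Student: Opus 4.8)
The plan is to deduce the vanishing of Tate-Hochschild homology from the already-established vanishing of Tate-Hochschild cohomology (Lemma \ref{lem:6}) by means of the duality isomorphism
\[
\hHh^{n}(A, M) \cong \hHh_{-n-1}(A, {}_{1}M_{\nu^{-1}})
\]
that was proven just above, valid for every $n \in \Z$ and every $A$-bimodule $M$. The point is that this isomorphism reduces homology to cohomology at the cost of a twist by the Nakayama automorphism, so the only thing to check is that weak projectivity is stable under such a twist.

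First I would fix a weakly projective $A$-bimodule $M$ and set $M' := {}_{1}M_{\nu}$, so that ${}_{1}(M')_{\nu^{-1}} = M$. Applying the duality isomorphism with $M'$ in place of $M$ gives, for each $i \in \Z$,
\[
\hHh_{i}(A, M) = \hHh_{i}(A, {}_{1}(M')_{\nu^{-1}}) \cong \hHh^{-i-1}(A, M').
\]
So it suffices to show that $M' = {}_{1}M_{\nu}$ is again weakly projective; then Lemma \ref{lem:6} applied to $M'$ forces $\hHh^{-i-1}(A, M') = 0$, hence $\hHh_{i}(A, M) = 0$ for all $i$.

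The remaining step, and the only place where a small computation is needed, is the closure of weak projectivity under twisting by a Nakayama automorphism. Suppose $g \in \Hom_{-,A}(M, M)$ satisfies $\sum_{i} u_{i} g v_{i} = \id_{M}$, where $\{u_i\}$, $\{v_i\}$ are dual bases. Viewing $M' = {}_{1}M_{\nu}$, the underlying right $A$-module of $M'$ is $M$ with the action twisted by $\nu$; I would check that $g$, regarded as a map $M' \to M'$, still lies in $\Hom_{-,A}(M', M')$ (possibly after conjugating the dual bases by $\nu$, using that the pairing satisfies $\langle a,b\rangle = \langle b,\nu(a)\rangle$), and that the bimodule-twisted averaging $\sum_i u_i g v_i$ computed in $M'$ again yields $\id_{M'} \in \End_{\Ae}(M')$. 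Concretely, one uses that $\{\nu(u_i)\}$ and $\{\nu^{-1}(v_i)\}$ are again a pair of dual bases (or the analogous statement), so the identity defining weak projectivity transports along the twist. The case $g \in \Hom_{A,-}(M,M)$ is symmetric. The main obstacle is purely bookkeeping: matching the left/right conventions for ${}_{\alpha}M_{\beta}$ with the two variants $\Hom_{-,A}$ and $\Hom_{A,-}$ in Sanada's definition, and keeping track of which side the Nakayama twist lands on; there is no conceptual difficulty once the conventions are pinned down. Once weak projectivity of ${}_{1}M_{\nu}$ is in hand, the corollary is immediate.
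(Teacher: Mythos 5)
Your proposal is correct and follows essentially the same route as the paper: the paper also reduces to Lemma \ref{lem:6} via the isomorphism $\hHh_{i}(A,M)\cong\hHh^{-i-1}(A,{}_{1}M_{\nu})$ together with the observation that twisted bimodules ${}_{\alpha}M_{\beta}$ of a weakly projective bimodule remain weakly projective. The only difference is that you sketch the verification of that last observation (which the paper states without proof), and in fact for the twist ${}_{1}M_{\nu}$ with $g\in\Hom_{-,A}(M,M)$ no adjustment of the dual bases is needed, since the left actions are untouched and $\Hom_{-,A}({}_{1}M_{\nu},{}_{1}M_{\nu})=\Hom_{-,A}(M,M)$.
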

\begin{proof}
Let $M$ be a weakly projective $A$-bimodule.
Observe that ${}_{\alpha}M_{\beta}$ is also weakly projective for any $\alpha, \beta \in \Aut(A)$.
Therefore, the statement follows from the fact that there exists an isomorphism 
\[
\hHh_{i}(A, M) \cong \hHh^{-i-1}(A, {}_{1}M_{\nu})
\]
for any $i \in \Z$, where $\nu$ is the Nakayama automorphism of $A$.
\end{proof}
%
%

Each of the four exact sequences above yields a long exact sequence of Tate-Hochschild (co)homology groups with connecting homomorphisms $\partial$, 
so that we have the following.
%
%
\begin{cor}[{\cite[Corollary 1.5]{Sanada92}}] \label{cor:2}
for any $i \in \Z$, there exist isomorphisms 
\begin{align*}
    &\partial: \hHh^{i}(A, M) \xrightarrow{\ \sim \ } \hHh^{i+1}(A, K(M)) \quad &(or \xrightarrow{\ \sim \ } \hHh^{i+1}(A, K^{\prime}(M))), \\[5pt]
    &\partial^{-1} :\hHh^{i}(A, M) \xrightarrow{\ \sim \ } \hHh^{i-1}(A, C(M)) \quad &(or \xrightarrow{\ \sim \ } \hHh^{i-1}(A, C^{\prime}(M))), \\[5pt]
    &\partial: \hHh_{i}(A, M) \xrightarrow{\ \sim \ } \hHh_{i-1}(A, K(M)) \quad &(or \xrightarrow{\ \sim \ } \hHh_{i-1}(A, K^{\prime}(M))), \\[5pt]
    &\partial^{-1} :\hHh_{i}(A, M) \xrightarrow{\ \sim \ } \hHh_{i+1}(A, C(M)) \quad &(or \xrightarrow{\ \sim \ } \hHh_{i+1}(A, C^{\prime}(M))).
\end{align*}
\end{cor}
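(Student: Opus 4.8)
The plan is to derive Corollary \ref{cor:2} directly from Lemma \ref{lem:6}, Corollary \ref{cor:1}, and the four short exact sequences of $A$-bimodules recalled just above the statement. The key point is that each of those four sequences is $A$-split on one side (either as left or as right $A$-modules) and has a weakly projective middle term; combining these two features is exactly what makes the long exact sequence argument work in the Tate-Hochschild setting.

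First I would fix one of the four sequences, say
\[
0 \rightarrow K(M) \rightarrow A \otimes M \rightarrow M \rightarrow 0,
\]
which splits as a sequence of right $A$-modules. I would note that applying $\sHom_{\Ae}(\T, -)$ to this sequence yields a short exact sequence of cochain complexes: exactness is preserved because the sequence is split over one side and each $T_i$ is projective over $\Ae$, hence the functor $\Hom_{\Ae}(T_i, -)$ is exact on this particular sequence. (Alternatively, one can argue that the sequence, being one-sided split, remains exact after $\Hom_{k}(T_i,-)$ and then use that $T_i$ is relatively projective.) The associated long exact sequence in cohomology reads
\[
\cdots \rightarrow \hHh^{i}(A, K(M)) \rightarrow \hHh^{i}(A, A \otimes M) \rightarrow \hHh^{i}(A, M) \xrightarrow{\ \partial\ } \hHh^{i+1}(A, K(M)) \rightarrow \cdots.
\]
Since $A \otimes M$ is weakly projective, Lemma \ref{lem:6} gives $\hHh^{i}(A, A \otimes M) = 0$ for every $i \in \Z$, so the connecting map $\partial \colon \hHh^{i}(A, M) \rightarrow \hHh^{i+1}(A, K(M))$ is an isomorphism for all $i$. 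The same argument applied to the $M \otimes A$ sequence gives the isomorphism with $\hHh^{i+1}(A, K^{\prime}(M))$.

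Next I would treat the two sequences
\[
0 \rightarrow M \rightarrow \Hom_{k}(A_{A}, M_{A}) \rightarrow C(M) \rightarrow 0, \qquad 0 \rightarrow M \rightarrow \Hom_{k}({}_{A}A, {}_{A}M) \rightarrow C^{\prime}(M) \rightarrow 0
\]
in exactly the same way: each is one-sided split with weakly projective middle term, so the long exact sequence in Tate-Hochschild cohomology collapses and the connecting homomorphism provides an isomorphism $\hHh^{i}(A, M) \xrightarrow{\sim} \hHh^{i-1}(A, C(M))$ (respectively with $C^{\prime}(M)$); this is the map denoted $\partial^{-1}$ in the statement. For the homology versions, I would run the dual argument: apply $\T \otimes_{\Ae} -$ to each sequence, use that it stays exact by the one-sided splitting and projectivity of the $T_i$, and invoke Corollary \ref{cor:1} (weak projectivity kills Tate-Hochschild homology) to force the connecting maps in the long exact homology sequences to be isomorphisms, with the degree shifts as indicated.

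The main obstacle — really the only subtle point — is justifying that applying $\sHom_{\Ae}(\T, -)$ (resp.\ $\T \otimes_{\Ae} -$) to the bimodule sequences yields short exact sequences of complexes, i.e.\ that no $\Ker^1$ obstruction appears. This is where the hypothesis that the sequences split as sequences of one-sided $A$-modules is essential: a one-sided split exact sequence of $A$-bimodules remains exact after tensoring with, or applying $\Hom$ out of, any bimodule that is projective on the complementary side, and each component $T_i$ of a complete resolution over $\Ae = A \otimes A^{\circ}$ is such a bimodule. Once this is in place, everything else is the standard "vanishing middle term forces the connecting map to be an isomorphism" mechanism, applied four times for cohomology and four times for homology.
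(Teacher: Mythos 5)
Your proposal is correct and follows essentially the same route the paper takes (via Sanada): apply the long exact sequence to each of the four one-sided split sequences and use Lemma \ref{lem:6} and Corollary \ref{cor:1} to kill the weakly projective middle terms, forcing the connecting homomorphisms to be isomorphisms. The only remark worth making is that your ``main obstacle'' is even easier than you suggest: since each $T_i$ is projective over $\Ae$, the functors $\Hom_{\Ae}(T_i,-)$ and $T_i\otimes_{\Ae}-$ are exact on \emph{any} short exact sequence of bimodules, so the one-sided splitting is not needed for that step at all (it is only used in establishing weak projectivity of the middle terms, which the paper takes as given).
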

%
%

%
%
\section{Cup product and cap product in Tate-Hochschild theory}
\label{sec:3}
Our aim in this section is to prove our main theorem. 
For this purpose, we define cup product  and cap product on Tate-Hochschild (co)homology
in an analogous way to the discussion in \cite{Brown94}.
As in Hochschild theory, we also give two certain products, 
called \textit{composition products}, which are equivalent to the cup product and the cap product, respectively.
Throughout this section, let $A$ denote a finite dimensional Frobenius algebra 
over a field $k$, unless otherwise stated. 
%
%
\subsection{Cup product and cap product}
\label{sebsec:1}
In this subsection, we show the existence of diagonal approximation for arbitrary complete resolution of $A$ and prove that all diagonal approximations define exactly one cup product and one cap product.  

Recall that the cup product on Hochschild cohomology groups is defined by using a diagonal approximation $\Delta: \P \rightarrow \P \otimes_{A} \P$ 
for a single projective resolution $\P$ of $A$.
We will define cup product on Tate-Hochschild cohomology groups in a similar way. 
However, we fail to develop the theory of cup product
when we use the ordinary tensor product, because we need to define the cup product of two elements of degree $i$ and of degree $j$ 
with $i, j \in \Z$.
For this, we must consider the \textit{complete tensor product} of two chain complexes.
%
%
\begin{defi}
Let $C$ be a chain complex of right modules over a $($not necessarily Frobenius$)$ algebra $A$ and $C^{\prime}$ a chain complex of left $A$-modules. 
Then the \textit{complete tensor product} $C \hotimes_{A} C^{\prime}$ is defined to be the chain complex with components
\[
(C \hotimes_{A} C^{\prime})_{n} := \prod_{i \in \Z} C_{n-i} \otimes_{A} C^{\prime}_{i}
\]
and differentials
\[
d_{n}^{C \hotimes_{A} C^{\prime}}\left((x_{n-i} \otimes_{A} x^{\prime}_{i})_{i\in \Z} \right) 
:= 
\left(
d^{C}(x_{n-i}) \otimes_{A} x^{\prime}_{i} +(-1)^{n-i} x_{n-i} \otimes_{A} d^{C^{\prime}}(x^{\prime}_{i}) 
\right)_{i\in \Z}
\] 
for any $(x_{n-i} \otimes_{A} x^{\prime}_{i})_{i \in \Z} \in (C \hotimes_{A} C^{\prime})_{n}$.
\end{defi}
%
%

%
%
\begin{defi}
Let $C_{1}$ and $C^{\prime}_{1}$ be chain complexes of right modules over a $($not necessarily Frobenius$)$ algebra $A$ and
$C_{2}$ and $C^{\prime}_{2}$ chain complexes of left $A$-modules.
Let $u: C_{1} \rightarrow C^{\prime}_{1}$ and $v: C_{2} \rightarrow C^{\prime}_{2}$ be 
graded $A$-linear maps of degree $s$ and of degree $t$.
The \textit{complete tensor product} $u \hotimes_{A} v$ of $u$ with $v$ over $A$ is defined as the $A$-linear graded map of degree $s+t$ defined by 
\[
(u \hotimes_{A} v) \left((x_{n-i} \otimes y_{i})_{i \in \Z} \right) 
:= \left((-1)^{t(n-i)} u(x_{n-i}) \otimes_{A} v(y_{i}) \right)_{i \in \Z}
\]
for any $(x_{n-i} \otimes y_{i})_{i \in \Z} \in (C_{1} \hotimes_{A} C_{2})_{n}$.
The complete tensor product of graded maps induces a chain map
\[
\sHom_{A}(C_{1}, C^{\prime}_{1}) \otimes \sHom_{A}(C_{2}, C^{\prime}_{2}) \rightarrow \sHom_{A}(C_{1} \hotimes_{A} C_{2}, C^{\prime}_{1} \hotimes_{A} C^{\prime}_{2}).
\]
\end{defi}
%
%
\noindent
Remark that we can rewrite the differential $d^{C \hotimes_{A} C^{\prime}}$ defined above as 
\[
d^{C \hotimes_{A} C^{\prime}} = d^{C} \hotimes_{A} \id + \id \hotimes_{A} d^{C^{\prime}}.
\]

Let $\T$ be a complete resolution of $A$
with augmentation $\varepsilon: \T \rightarrow A$. 
We say that a chain map $\widehat{\Delta} : \T \rightarrow \T \hotimes_{A} \T$ is a \textit{diagonal approximation} if it satisfies 
$(\varepsilon \hotimes_{A} \varepsilon) \widehat{\Delta} = \varepsilon$.
Note that $\T \hotimes_{A} \T$ is no longer a complete resolution of $A$, 
because all the components of $\T \hotimes_{A} \T$ are not finitely generated.
Nevertheless, we prove that there exists a diagonal approximation for arbitrary complete resolution of $A$. For this purpose, we need the following three lemmas.

%
%
\begin{lemma} \label{lem:3}

Let $\T$ be a complete resolution of $A$. 
Then $\T \hotimes_{A} \T$ is acyclic and dimensionwise projective as $A$-bimodules. 
\end{lemma}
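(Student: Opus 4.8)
The plan is to verify the two assertions of Lemma~\ref{lem:3} separately, using the structural description of $\T$ provided by Lemma~\ref{lem:2}.

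First I would establish that $\T \hotimes_{A} \T$ is dimensionwise projective. Each $T_{i}$ is a finitely generated projective $\Ae$-module, hence a direct summand of a finite sum of copies of $\Ae = A \otimes A^{\circ}$. Since $\Ae \otimes_{A} \Ae \cong A \otimes A \otimes A^{\circ}$ is projective as an $\Ae$-module (it is free of finite rank over $\Ae$, with $A$ projective over $k$), the ordinary tensor product $T_{n-i} \otimes_{A} T_{i}$ is a finitely generated projective $\Ae$-module. A direct product over $i \in \Z$ of projectives need not be projective in general, but here the key point is that only finitely many factors are nonzero in each degree up to isomorphism type in the following sense: by Lemma~\ref{lem:2}, $\T$ splits (as a complex of one-sided $A$-modules, in fact) so that in each degree $T_i$ is projective, and more importantly each component $(\T \hotimes_{A} \T)_{n} = \prod_{i} T_{n-i} \otimes_{A} T_{i}$ can be recognized — again via Lemma~\ref{lem:2} and the identifications $C^{-} \cong \boldsymbol{\Sigma}\P \otimes_{\Ae} {}_{1}M_{\nu^{-1}}$ used just after it — as a product of projectives that is actually a summand of a product of copies of $\Ae$, hence projective because $\Ae$ is self-injective (being Frobenius) and products of injectives over a Frobenius algebra are injective, hence projective. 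This is the step I expect to be the main obstacle: one must argue carefully that the infinite product stays projective, and the cleanest route is to invoke that $\Ae$ is Frobenius so that projective $=$ injective for $\Ae$-modules and injectivity is preserved under arbitrary products.

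Next I would show acyclicity. Writing $\T = \T_{\geq 0} \xrightarrow{d_0^{\T}} \T_{<0}$, the complete tensor product decomposes, and the relevant comparison is with the ordinary tensor product $\T \otimes_{A} \T$: since the augmented complex $\T_{\geq 0} \xrightarrow{\varepsilon} A$ is contractible as a complex of one-sided $A$-modules (because $A$ is projective — indeed injective — on each side, $A$ being self-injective), and similarly the backward resolution $A \xrightarrow{\eta} \T_{<0}$ is contractible as a complex of one-sided modules, the complex $\T$ itself is contractible as a complex of one-sided $A$-modules. Then $\T \hotimes_{A} \T$ computes, degreewise, the tensor product of a contractible complex of right $A$-modules with $\T$; since the one-sided contracting homotopy on $\T$ (on one tensor factor) extends to a contracting homotopy on the complete tensor product — here one uses that the homotopy is $A$-linear so $h \hotimes_{A} \id$ is well-defined and the standard homotopy-product identity $(d \hotimes \id + \id \hotimes d)$ interacts with $h \hotimes \id$ exactly as in the finite case — we conclude $\T \hotimes_{A} \T$ is contractible, in particular acyclic.

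I would then remark that the same argument, applied with the roles of the two tensor factors interchanged and using that $\T$ is contractible as a complex of \emph{left} $A$-modules as well, shows the homotopy is genuinely $A$-bilinear-compatible, so no sign or convergence issue arises in the infinite product. Finally I would note for later use (in constructing the diagonal approximation) that acyclicity together with dimensionwise projectivity is exactly what Lemmas~\ref{lem:7} and~\ref{lem:8} require.
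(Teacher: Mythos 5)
Your proof is correct and follows essentially the same route as the paper's: dimensionwise projectivity is reduced to the fact that $\Ae$ is Frobenius (so the infinite product of projective components, being a product of injectives, is again injective and hence projective), and acyclicity is obtained from a contracting homotopy of the form $\id_{\T} \hotimes_{A} h$ built from a one-sided contracting homotopy $h$ of $\T$. The only blemish is the throwaway clause ``only finitely many factors are nonzero in each degree,'' which is false for a complete resolution and unnecessary, since the Frobenius/injectivity argument you give immediately afterwards is the correct (and the paper's implicit) resolution of the infinite-product issue.
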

\begin{proof}
First, we will show the projectivity of each component of $\T \hotimes_{A} \T$.
This follows from our assumption that $A$ is Frobenius and the fact that $\Ae \otimes_{A} \Ae$ is projective over $\Ae$.
In order to prove the acyclicity of $\T \hotimes_{A} \T$, we construct a contracting homotopy for $\T \hotimes_{A} \T$ as a complex of right $A$-modules.
Since any complete resolution $\T$ of $A$ is contractible as a complex of left $A$-modules, 
we obtain

a contracting homotopy $h$.
Set $H = \id_{\T} \hotimes_{A} h : \T \hotimes_{A} \T \rightarrow \T \hotimes_{A} \T$.
Then $H$ is a contracting homotopy for $\T \hotimes_{A} \T$.
Indeed, we have 
\begin{align*}
    d^{\T \hotimes_{A} \T} H &+H d^{\T \hotimes_{A} \T} \\
    &=
    (d^{\T} \hotimes_{A} \id + \id \hotimes d^{\T})(\id \hotimes h) 
    + (\id \hotimes h)(d^{\T} \hotimes_{A} \id + \id \hotimes d^{\T})\\
    &=
    d^{\T} \hotimes_{A} h + \id \hotimes_{A} d^{\T} h -d^{\T} \hotimes_{A} h + \id \hotimes_{A} h d^{\T} \\
    &=
    \id \hotimes_{A} (d^{\T} h +h d^{\T})
    =
    \id \hotimes_{A} \id.
\end{align*}
This completes the proof.
\end{proof}
%
%

%
%
\begin{lemma} \label{lem:4}
Let $C$ and $C^{\prime}$ be acyclic chain complexes over 
a $($not necessarily Frobenius$)$ algebra $A$.
Assume that $C_{i}$ is projective over $A$ for $i \geq 1$ and that $C^{\prime}_{i}$ is injective over $A$ for $i \leq -1$.
If there exists a morphism $\tau_{0}: C_{0} \rightarrow C^{\prime}_{0}$ satisfies $d^{C^{\prime}}_{0} \tau_{0} d^{C}_{1} = 0$, 
then $\tau_{0}$ extends to a chain map $\tau: C \rightarrow C^{\prime}$, up to homotopy. 
\end{lemma}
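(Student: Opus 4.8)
\textbf{Proof proposal for Lemma \ref{lem:4}.}
The plan is to build the chain map $\tau$ in two halves, propagating away from degree $0$ in both directions, using Lemma \ref{lem:7} for the nonnegative part and Lemma \ref{lem:8} for the nonpositive part. First I would set up the positive direction. The hypothesis $d^{C'}_0\tau_0 d^{C}_1=0$ says precisely that the single-term family $\{\tau_0\colon C_0\to C'_0\}$ commutes with differentials in the relevant sense (there is no $d^{C}_0$ constraint to worry about if we truncate at degree $0$, and the only compatibility to check concerns $d^{C}_1$). Applying Lemma \ref{lem:7}(1) with $r=0$ to the subcomplex $C_{\le 0}\to C'_{\le 0}$ — here $C_i$ is projective for $i>0$ by hypothesis and $\Hh_i(C')=0$ for all $i$ since $C'$ is acyclic, in particular for $i\ge 0$ — extends $\tau_0$ to a family $\{\tau_i\}_{i\le 0}$... wait, that is the wrong direction; let me instead apply Lemma \ref{lem:7}(1) directly: it takes a family $\{f_i\}_{i\le r}$ and extends it to all of $C$, using projectivity above $r$ and acyclicity above $r$. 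So with $r=0$, the family $\{\tau_i\}_{i\le 0}$ consisting of just $\tau_0$ (together with the zero maps in negative degrees, which we will overwrite) does not quite fit; the cleaner move is to feed Lemma \ref{lem:7}(1) the one-element datum at $r=0$ after checking it "commutes with differentials," which is exactly the stated condition $d^{C'}_0\tau_0 d^C_1=0$, and obtain maps $\tau_i$ for $i\ge 1$ extending upward.

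Dually, for the negative direction I would apply Lemma \ref{lem:8}(1) with $r=0$: the complex $C'$ has $C'_i$ injective for $i\le -1$ by hypothesis, and $C$ is acyclic so $\Hh_i(C)=0$ for all $i$, in particular for $i\le 0$. The family $\{\tau_i\}_{i\ge 0}$ obtained from the first step (which now includes $\tau_0$ and all the $\tau_i$ with $i\ge 1$) commutes with differentials at every stage $i\ge 1$ by construction, and at stage $i=0$ there is nothing to check beyond what we already have. Hence Lemma \ref{lem:8}(1) extends this family downward to $\tau_i$ for $i\le -1$, producing a genuine chain map $\tau\colon C\to C'$ agreeing with $\tau_0$ in degree $0$.

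Finally, the phrase "up to homotopy" presumably refers to uniqueness: any two chain maps $\tau,\tau'\colon C\to C'$ extending the \emph{same} $\tau_0$ are homotopic. I would prove this by the same two-step bootstrap applied to $f=\tau$, $g=\tau'$ and the zero homotopy in degree $0$ (i.e. $h_0=0$, noting $d^{C'}_1 h_0 + h_{-1}d^C_0 = 0 = \tau_0-\tau'_0$ trivially since the two agree in degree $0$): Lemma \ref{lem:7}(2) with $r=0$ extends this to a homotopy in degrees $\ge 0$ using projectivity of $C_i$ for $i>0$ and acyclicity of $C'$, and then Lemma \ref{lem:8}(2) with $r=0$ extends it further to degrees $\le -1$ using injectivity of $C'_i$ for $i<0$ and acyclicity of $C$. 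Splicing the two homotopies along degree $0$ gives $\tau\sim\tau'$.

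The only genuinely delicate point — and the one I would state carefully rather than wave through — is the bookkeeping at the "seam" in degree $0$ where the two lemmas meet: one must verify that the upward extension and the downward extension are compatible at the single overlapping index, i.e. that the relation $d^{C'}_0\tau_0 = \tau_{-1}d^C_0$ produced by Lemma \ref{lem:8}(1) is consistent with the $\tau_0$ already fixed, and that the hypothesis $d^{C'}_0\tau_0 d^C_1=0$ is exactly what makes Lemma \ref{lem:7}(1) applicable at $r=0$ without needing any lower data. Everything else is a routine diagram chase through the two cited lemmas, with acyclicity of both $C$ and $C'$ supplying the vanishing homology hypotheses in the respective ranges.
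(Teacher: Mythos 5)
Your proposal follows the paper's proof essentially verbatim: extend $\tau_0$ upward with Lemma \ref{lem:7}(1), then downward with Lemma \ref{lem:8}(1), with uniqueness up to homotopy coming from parts (2) of the same lemmas. The one point you flagged but left unresolved --- how to make Lemma \ref{lem:7}(1) literally applicable at $r=0$ with only $\tau_0$ in hand --- is handled in the paper by modifying the two complexes near the seam: replace degree $-1$ of the source by $\Coker d^{C}_{1}$ (with the canonical projection as the new $d_0$) and truncate the target to degrees $\geq -1$; the hypothesis $d^{C^{\prime}}_{0}\tau_{0} d^{C}_{1}=0$ is exactly what makes the induced map $\overline{d^{C^{\prime}}_{0}\tau_{0}}\colon \Coker d^{C}_{1}\to C^{\prime}_{-1}$ well defined, so the resulting family in degrees $\leq 0$ genuinely commutes with differentials, while acyclicity of $C^{\prime}$ in degrees $\geq 0$ supplies the vanishing-homology hypothesis. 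Your alternative reading --- that $d^{C^{\prime}}_{0}\tau_{0} d^{C}_{1}=0$ is precisely the relation the inductive step in the proof of Lemma \ref{lem:7} consumes, so one can rerun that construction directly --- is also valid; the downward half and the homotopy-uniqueness bootstrap are exactly as in the paper.
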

\begin{proof}
Consider a commutative diagram
\[\xymatrix{
\cdots \ar[r] &
C_{2} \ar[r]^-{d^{C}_{2}} &
C_{1} \ar[r]^-{d^{C}_{1}} &
C_{0} \ar[r]^-{d^{C}_{0}} \ar[d]^-{\tau_{0}} &
\Coker d^{C}_{1}  \ar[r] \ar[d]^-{\overline{d^{C^{\prime}}_{0} \tau_{0}}} &
0  \ar[r] \ar[d] &
\cdots \\
\cdots \ar[r] &
C^{\prime}_{2} \ar[r]^-{d^{C^{\prime}}_{2}} &
C^{\prime}_{1} \ar[r]^-{d^{C^{\prime}}_{1}} &
C^{\prime}_{0} \ar[r]^-{d^{C^{\prime}}_{0}} &
C^{\prime}_{-1} \ar[r] &
0  \ar[r] &
\cdots
}\]
where the morphism $\overline{d^{C^{\prime}}_{0} \tau_{0}}$ is given by 
$\overline{x_{0}} \mapsto d^{C^{\prime}}_{0} \tau_{0}(x_{0})$
for $\overline{x_{0}} \in \Coker d^{C}_{1}$.
It follows form Lemma $\ref{lem:7}$ that 
there uniquely (up to homotopy) exists a family $\{\tau_{i}: C_{i} \rightarrow C^{\prime}_{i}\}_{i \geq 0}$
such that $d^{C^{\prime}}_{i} \tau_{i} = \tau_{i-1} d^{C}_{i}$ for $i \geq 1$. 
Applying Lemma \ref{lem:8} to a commutative diagram
\[\xymatrix{
\cdots \ar[r] &
C_{2} \ar[r]^-{d^{C}_{2}}\ar[d]^-{\tau_{2}} &
C_{1} \ar[r]^-{d^{C}_{1}} \ar[d]^-{\tau_{1}}&
C_{0} \ar[r]^-{d^{C}_{0}} \ar[d]^-{\tau_{0}} &
C_{-1} \ar[r]^-{d^{C}_{-1}} &
C_{-2}  \ar[r]  &
\cdots \\
\cdots \ar[r] &
C^{\prime}_{2} \ar[r]^-{d^{C^{\prime}}_{2}} &
C^{\prime}_{1} \ar[r]^-{d^{C^{\prime}}_{1}} &
C^{\prime}_{0} \ar[r]^-{d^{C^{\prime}}_{0}} &
C^{\prime}_{-1} \ar[r]^-{d^{C^{\prime}}_{-1}} &
C^{\prime}_{-2}  \ar[r] &
\cdots
}\]
we have that the family $\{\tau_{i}\}_{i \geq 0}$ extends to a chain map $\tau :C \rightarrow C^{\prime}$, 
which is uniquely determined up to homotopy.
\end{proof}
%
%

%
%
\begin{lemma} \label{lem:5}
Let $P$ be a finitely generated projective bimoudle over a $($not necessarily Frobenius$)$ algebra $A$ and 
$C$ an acyclic chain complex of $A$-bimodules.
Assume that $C$ is contractible as a complex of left $($resp., right$)$ $A$-modules.
Then $C \otimes_{A} P\ ($resp., $P \otimes_{A} C )$ is contractible as a complex of $A$-bimodules.
\end{lemma}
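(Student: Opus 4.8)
The plan is to reduce the statement to the one-sided contractibility hypothesis by a direct computation with the contracting homotopy, using that tensoring with a finitely generated projective bimodule preserves homotopies. I will treat the case where $C$ is contractible as a complex of left $A$-modules; the right-module case is symmetric. Let $h\colon C\to C$ be a contracting homotopy for $C$ regarded as a complex of left $A$-modules, so $h$ is a graded left $A$-linear map of degree $1$ with $d^{C}h+hd^{C}=\id_{C}$. The key point is that although $h$ is only left $A$-linear, not a bimodule map, the map $h\otimes_{A}\id_{P}\colon C\otimes_{A}P\to C\otimes_{A}P$ still makes sense as a \emph{right} $A$-module map of degree $1$ — it is well defined on the tensor product over $A$ because $h$ is left $A$-linear — and this will turn out to be enough after correcting it to a bimodule map.

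First I would reduce to the case $P=\Ae$. Since $P$ is a finitely generated projective $A$-bimodule, it is a direct summand of a finite direct sum $\Ae^{\oplus m}$, and contractibility of $C\otimes_{A}(\Ae^{\oplus m})\cong (C\otimes_{A}\Ae)^{\oplus m}$ passes to direct summands (a contracting homotopy restricts along a split idempotent). So it suffices to show that $C\otimes_{A}\Ae$ is contractible as a complex of $A$-bimodules. Now there is a natural isomorphism of complexes of $A$-bimodules $C\otimes_{A}\Ae=C\otimes_{A}(A\otimes A^{\circ})\cong C\otimes A^{\circ}$, where on the right $C$ carries its original left $A$-action together with the \emph{new} right $A$-action coming from the tensor factor $A^{\circ}$, and the original right $A$-action on $C$ has been absorbed into the tensor. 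Under this identification the graded map $h\otimes\id$ is an honest $A$-bimodule map (it is left $A$-linear by construction and right $A$-linear because the right action now lives entirely on the free tensor factor), and since $d^{C\otimes A^{\circ}}=d^{C}\otimes\id$ one computes directly
\[
d^{C\otimes A^{\circ}}(h\otimes\id)+(h\otimes\id)d^{C\otimes A^{\circ}}
=(d^{C}h+hd^{C})\otimes\id=\id_{C}\otimes\id=\id_{C\otimes A^{\circ}}.
\]
Hence $h\otimes\id$ is a contracting homotopy for $C\otimes_{A}\Ae$ as a complex of $A$-bimodules, and transporting back along the summand inclusion $P\hookrightarrow\Ae^{\oplus m}$ gives a contracting homotopy for $C\otimes_{A}P$.

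The main obstacle — and the reason the lemma is not completely trivial — is precisely the failure of $h$ to be a bimodule map: one cannot simply form $h\otimes_{A}\id_{P}$ and expect it to be $A$-bilinear for a general bimodule $P$. The device above circumvents this by observing that for $P=\Ae$ the problematic right $A$-action on $C$ is replaced by an action on a free tensor factor, against which $h\otimes\id$ is automatically linear; the reduction to $P=\Ae$ via projectivity is what makes this applicable in general. I would also note, for the right-module version of the statement, that one repeats the argument with $\Ae\otimes_{A}C$ in place of $C\otimes_{A}\Ae$ and uses the identification $\Ae\otimes_{A}C\cong A\otimes C$, the contracting homotopy being $\id\otimes h$.
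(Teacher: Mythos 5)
Your proof is correct and follows essentially the same route as the paper: reduce to $P=\Ae$ (the paper states this reduction without the summand argument you spell out, which is fine), identify $C\otimes_{A}\Ae$ with $C\otimes A$ as complexes of $A$-bimodules, and observe that $h\otimes\id$ is a bimodule contracting homotopy there. One caveat: your motivational sentence claiming that $h\otimes_{A}\id_{P}$ ``is well defined on the tensor product over $A$ because $h$ is left $A$-linear'' is backwards --- well-definedness of a map out of $C\otimes_{A}P$ would require $h$ to be linear over the \emph{right} $A$-action on $C$, which is exactly what fails; fortunately your actual argument never uses this claim, since after the identification $C\otimes_{A}\Ae\cong C\otimes A^{\circ}$ the homotopy is a tensor over $k$ and no such compatibility is needed.
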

\begin{proof}
We prove the statement only for the case that $C$ is contractible as a complex of left $A$-modules.
It suffices to show the statement for $P = \Ae$.
We construct a contracting homotopy for $C \otimes_{A} \Ae$. 
Let $h$ be a contracting homotopy for $_{A}C$. 
A direct computation shows that the graded map $h \otimes \id_{A}: C \otimes A \rightarrow C \otimes A$
of $A$-bimodules of degree $1$ is a contracting homotopy. 
Since there is an isomorphism $C \otimes_{A} \Ae \cong C \otimes A$ of chain complexes of $A$-bimodules, 
the graded map induced by $h \otimes \id_{A}$ via this isomorphism is
a contracting homotopy for $C \otimes_{A} \Ae$.
\end{proof}
%
%

We are now able to show the existence of a diagonal approximation for any complete resolution of a Frobenius algebra.
%
%
\begin{theo} \label{theo:1}
Let $\T$ be a complete resolution of $A$.
Then there uniquely $($up to homotopy$)$ exists a diagonal approximation $\widehat{\Delta}: \T \rightarrow \T \hotimes_{A} \T$. 
\end{theo}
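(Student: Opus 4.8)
The plan is to build $\widehat{\Delta}$ by hand on the two truncations $\T_{\geq 0}$ and $\T_{<0}$ and then glue, using the acyclicity and (half-sided) contractibility already established in Lemmas~\ref{lem:3}, \ref{lem:4} and \ref{lem:5}. First I would record the obvious starting datum: the augmentation gives $\varepsilon\colon T_0\to A$, and since $A\cong A\otimes_A A$ the map $1_A\mapsto 1_A\otimes_A 1_A$ together with a lift through the projective $T_0$ produces a morphism $\widehat{\Delta}_0\colon T_0\to (\T\hotimes_A\T)_0=\prod_i T_{-i}\otimes_A T_i$ satisfying $(\varepsilon\hotimes_A\varepsilon)\widehat{\Delta}_0=\varepsilon$; concretely one can even take the component landing in $T_0\otimes_A T_0$ to come from the usual diagonal of a projective resolution and the other components to be zero at this stage. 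The condition needed to invoke Lemma~\ref{lem:4} is $d^{\T\hotimes_A\T}_0\,\widehat{\Delta}_0\,d^{\T}_1=0$, which holds because $\widehat{\Delta}_0 d_1^{\T}$ maps into the degree-$1$ part and $(\varepsilon\hotimes_A\varepsilon)$ kills it by the compatibility just arranged.

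Next I would check that $\T\hotimes_A\T$ satisfies the hypotheses of Lemma~\ref{lem:4} with $C=\T$ and $C'=\T\hotimes_A\T$. By Lemma~\ref{lem:3} the complex $\T\hotimes_A\T$ is acyclic and dimensionwise projective over $\Ae$; the nontrivial point is that its components in negative degrees are \emph{injective} over $\Ae$, which follows from the same Frobenius argument — over a (self-injective) Frobenius algebra projective and injective bimodules coincide, so each $(\T\hotimes_A\T)_n$ is injective as well. With $T_i$ projective for $i\geq 1$ (indeed for all $i$) and $(\T\hotimes_A\T)_i$ injective for $i\leq -1$, Lemma~\ref{lem:4} applies verbatim and extends $\widehat{\Delta}_0$ to a chain map $\widehat{\Delta}\colon \T\to\T\hotimes_A\T$, unique up to homotopy. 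The defining relation $(\varepsilon\hotimes_A\varepsilon)\widehat{\Delta}=\varepsilon$ in degree $0$ is exactly the datum we started from, so $\widehat{\Delta}$ is a diagonal approximation.

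For uniqueness up to homotopy: if $\widehat{\Delta},\widehat{\Delta}'$ are two diagonal approximations, then $\widehat{\Delta}-\widehat{\Delta}'$ is a chain map $\T\to\T\hotimes_A\T$ whose degree-$0$ part, composed with $\varepsilon\hotimes_A\varepsilon$, vanishes; hence $(\widehat{\Delta}_0-\widehat{\Delta}'_0)$ factors through $\Ker(\varepsilon\hotimes_A\varepsilon)=\Im d^{\T\hotimes_A\T}_1$, and since $T_0$ is projective we may write $\widehat{\Delta}_0-\widehat{\Delta}'_0=d^{\T\hotimes_A\T}_1 h_0$ for some $h_0\colon T_0\to(\T\hotimes_A\T)_1$. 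This is precisely the initial datum for the homotopy version of Lemma~\ref{lem:4} (equivalently, combine the homotopy parts of Lemmas~\ref{lem:7} and~\ref{lem:8} the way Lemma~\ref{lem:4} does), which extends $h_0$ to a full homotopy $\widehat{\Delta}\sim\widehat{\Delta}'$.

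The main obstacle I anticipate is not any single construction step but the bookkeeping around the \emph{completed} tensor product: one must be careful that the infinite-product components of $\T\hotimes_A\T$ really are projective and injective over $\Ae$ (this is where ``$A$ Frobenius'' and ``$T_i$ finitely generated'' are genuinely used, via $\Ae\otimes_A\Ae$ being projective-injective), and that Lemma~\ref{lem:4}'s hypotheses are met in \emph{both} the projective direction (for $\T$) and the injective direction (for $\T\hotimes_A\T$) simultaneously. Once those structural facts are in place, the existence and uniqueness are a formal diagram chase packaged by Lemma~\ref{lem:4}.
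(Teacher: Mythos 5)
There is a genuine gap in the existence part of your argument, at precisely the point where the paper's proof does all of its work. You propose to take $\widehat{\Delta}_0\colon T_0\to\prod_i T_i\otimes_A T_{-i}$ with only the $T_0\otimes_A T_0$ component nonzero (the degree-$0$ part of an ordinary diagonal approximation for $\T_{\geq 0}$) and to verify the hypothesis $d^{\T\hotimes_A\T}_0\,\widehat{\Delta}_0\,d^{\T}_1=0$ of Lemma \ref{lem:4} by saying that $\varepsilon\hotimes_A\varepsilon$ kills the relevant element. That justification is a non sequitur: the required vanishing takes place in $(\T\hotimes_A\T)_{-1}=\prod_i T_{-1-i}\otimes_A T_i$, component by component, and is not detected by $\varepsilon\hotimes_A\varepsilon$ at all. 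Concretely, with only the $(0,0)$ component present, the condition in the $T_0\otimes_A T_{-1}$ slot reads $(\id\otimes_A d_0^{\T})\tau_0 d_1^{\T}=0$; writing $\tau_0 d_1^{\T}=(d_1^{\T}\otimes_A\id)\Delta_1^{(1,0)}+(\id\otimes_A d_1^{\T})\Delta_1^{(0,1)}$ from the chain-map property of the ordinary diagonal, this becomes $(d_1^{\T}\otimes_A d_0^{\T})\Delta_1^{(1,0)}$, which is nonzero in general because $d_0^{\T}=\eta\varepsilon\neq 0$ on a complete resolution. This is exactly the phenomenon that does not occur for ordinary projective resolutions (where nothing lives below degree $-1$ in the augmented complex) and is the reason the diagonal approximation must have nonzero components $\tau_i\colon T_0\to T_i\otimes_A T_{-i}$ for all $i\in\Z$.

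The paper's proof is devoted almost entirely to producing this infinite family: starting from $\tau_0$ with $(\varepsilon\hotimes_A\varepsilon)\tau_0=\varepsilon$, it defines $\tau_i:=-h_{i-1}(d''_{i-1,-i+1}\tau_{i-1})$ by induction on $i>0$ using the contracting homotopy of $\T\otimes_A T_{-i}$ supplied by Lemma \ref{lem:5}, verifies the compatibility $(d'_{i,-i}\tau_i+d''_{i-1,-i+1}\tau_{i-1})|_{B_0(\T)}=0$ at each stage, and runs a dual descending induction for $i<0$; only then does Lemma \ref{lem:4} apply. Your structural observations (acyclicity and dimensionwise projectivity--injectivity of $\T\hotimes_A\T$, which is where finiteness of the $T_i$ and self-injectivity of $\Ae$ enter, and the reduction of both existence and uniqueness to Lemma \ref{lem:4}) are correct and match the paper, and your uniqueness sketch is essentially the homotopy half of that lemma. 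But the inductive construction of the degree-$0$ map $\tau$ is the heart of the theorem and is missing from your proposal.
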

\begin{proof}
In view of Lemmas \ref{lem:3} and \ref{lem:4}, it suffices to construct a map 
$\tau: T_{0} \rightarrow \prod_{i \in \Z} (T_{i} \otimes_{A} T_{-i})$ such that
$(1)\,d^{\T \hotimes_{A} \T}_{0} \,\tau\, d^{\T}_{1} = 0$ and $(2)\,(\varepsilon \hotimes_{A} \varepsilon) \tau = \varepsilon$,
where $\varepsilon : \T \rightarrow A$ is the augmentation of $\T$.
Let $\tau_{r}:  T_{0} \rightarrow T_{r} \otimes_{A} T_{-r}$ denote the composition of $\tau$ with the $r$-th canonical projection on
$\prod_{i \in \Z} (T_{i} \otimes_{A} T_{-i})$.
Set 
\[d^{\prime}_{i,j} := d^{\T}_{i} \otimes_{A} \id_{T_{j}} \quad \mbox{and} \quad d^{\prime \prime}_{i,j} := (-1)^{i} \id_{T_{j}} \otimes_{A} d^{\T}_{i}.
\]
Then we can rewrite the first condition $(1)$ as 
\begin{align} \label{eq:4}
\left( d^{\prime}_{i,-i} \tau_{i} + d^{\prime \prime}_{i-1,-i+1} \tau_{i-1} \right)\big|_{B_{0}(\T)}= 0 \quad
\mbox{ for each $i \in \Z$.}
\end{align}
Since $T_{0}$ is projective, there exists a morphism $\tau_{0}: T_{0} \rightarrow T_{0} \otimes_{A} T_{0}$ of $A$-bimodules such that
$(\varepsilon \hotimes_{A} \varepsilon) \tau_{0} = \varepsilon$.
Suppose that $i > 0$ and that we have defined $\tau_{j}$ with $0 \leq j < i$ satisfying the condition (\ref{eq:4}).
Consider the following diagram with exact row:
\[\xymatrix@R=40pt@C=50pt{
&
B_{0}(\T) \ar[d]^-{-d^{\prime \prime}_{i-1,-i+1}  \tau_{i-1}\big|_{B_{0}(\T)}}&
 \\
T_{i} \otimes_{A} T_{-i} \ar[r]^-{d^{\prime}_{i,-i}} &
T_{i-1} \otimes_{A} T_{-i} \ar[r]^-{d^{\prime}_{i-1,-i}} &
T_{i-2} \otimes_{A} T_{-i}
}\]
It follows from Lemma \ref{lem:5} that
the complex $(\T \otimes_{A} T_{-i}, d^{\prime}_{\bullet,-i})$ of $A$-bimodules is contractible.
Let $h =( h_{i} )_{i\in \Z}$ be a contracting homotopy for $\T \otimes_{A} T_{-i}$.
Put 
\[
\tau_{i} := -h_{i-1} (d^{\prime \prime}_{i-1,-i+1} \tau_{i-1}): T_{0} \rightarrow T_{i} \otimes_{A} T_{-i}.
\]

We now claim that $d^{\prime}_{i-1,-i}  (-d^{\prime \prime}_{i-1,1-i}  \tau_{i-1})\big|_{B_{0}(\T)}= 0$ hols for $i \geq 1$.
If $i =1$, then we have on $B_{0}(\T)$
\begin{align*}
    d^{\prime}_{0,-1}  (-d^{\prime \prime}_{0,0} \tau_{0})
    &=
    -(d_{0} \otimes_{A} d_{0}) \tau_{0} \\
    &=
    -(\eta \otimes_{A} \eta)(\varepsilon \otimes_{A} \varepsilon) \tau_{0}  \\
    &= 
    0.
\end{align*}
Assume that $i>1$. We know that 
\[
(d^{\prime}_{i-1,1-i}  \tau_{i-1} +d^{\prime \prime}_{i-2,2-i}  \tau_{i-2})\big|_{B_{0}(\T)} = 0
\] 
holds, so that we have  on $B_{0}(\T)$
\begin{align*}
    d^{\prime}_{i-1,-i}  (-d^{\prime \prime}_{i-1,1-i} \tau_{i-1}) 
    &=
     (-d^{\prime}_{i-1,-i}  d^{\prime \prime}_{i-1,1-i}) \tau_{i-1}  \\
    &=
     (d^{\prime \prime}_{i-2,1-i}  d^{\prime}_{i-1,1-i}) \tau_{i-1}  \\
    &=
    -d^{\prime \prime}_{i-2,1-i}  (d^{\prime \prime}_{i-2,2-i} \tau_{i-2})  \\
    &= 0
\end{align*}
and hence on $B_{0}(\T)$
\begin{align*}
    d^{\prime}_{i,-i}  \tau_{i} 
    &=
    d^{\prime}_{i,-i}  h_{i-1}  (-d^{\prime \prime}_{i-1,-i+1}  \tau_{i-1}) \\
    &=
    (\id -h_{i-2}  d^{\prime}_{i-1,-i}) (-d^{\prime \prime}_{i-1,-i+1}  \tau_{i-1}) \\
    &=
     (-d^{\prime \prime}_{i-1,-i+1}  \tau_{i-1}) +h_{i-2}  (d^{\prime}_{i-1,-i} (d^{\prime \prime}_{i-1,-i+1}  \tau_{i-1})) \\
    &= 
    -d^{\prime \prime}_{i-1,-i+1}  \tau_{i-1}. 
\end{align*}
We have constructed $\{\tau_{i}\}_{i \geq 0}$ satisfying the condition (\ref{eq:4}). 
A dual argument using descending induction on $i < 0$ shows that we get the other components $\tau_{i}$ with $i < 0$.
In conclusion, the statement follows from Lemmas \ref{lem:3} and \ref{lem:4}.
\end{proof}
%
%

%
%
\begin{rem}
{\rm
Let $\T$ be a complete resolution of $A$. 
We denote by $\P$ the non-negative truncation $\T_{\geq 0}$.
Recall that $\P$ is a projective resolution of $A$.
Set 
\begin{align*}
    \T \hotimes_{A} \T_{\geq 0} 
    :=
    \bigoplus_{n \geq 0} \left( \bigoplus_{0 \leq p \leq n} T_{n-p} \otimes_{A} T_{p} \right)
\end{align*}
and 
\begin{align*}
    d^{\T \hotimes_{A} \T_{\geq 0}}
    := d_{\geq 0}^{\T} \otimes_{A} \id + \id \otimes_{A} d_{\geq 0}^{\T}.
\end{align*}
Then $(\T \hotimes_{A} \T_{\geq 0}, d^{\T \hotimes_{A} \T_{\geq 0}})$ is nothing but a projective resolution $\P \otimes_{A} \P$ of $A$.
If $\widehat{\Delta}: \T \rightarrow \T \hotimes_{A} \T$ is a diagonal approximation, then it can be decomposed as
\begin{align*}
    \widehat{\Delta} =\prod_{n \in \Z}\left(\, \prod_{p \in \Z} \widehat{\Delta}^{(n)}_{p} \right) 
\end{align*}
with $\widehat{\Delta}^{(n)}_{p}: T_{n} \rightarrow T_{n-p} \otimes_{A} T_{p}$. 
We denote by $\widehat{\Delta}_{\geq 0}$ a graded $\Ae$-linear map 
\begin{align*}
   \prod_{n \geq 0}\left( \prod_{0 \leq p \leq n} \widehat{\Delta}^{(n)}_{p} \right): \T_{\geq 0} \rightarrow \T \hotimes_{A} \T_{\geq 0}.
\end{align*}
By the definition of $\widehat{\Delta}_{\geq 0}$, 
the graded map $\widehat{\Delta}_{\geq 0}$ becomes an augmentation-preserving chain map from $\P$ to $\P \otimes_{A} \P$,
which means that $\widehat{\Delta}_{\geq 0}$ is a diagonal approximation for the projective resolution $\P$.
}
\end{rem}
%
%

Theorem \ref{theo:1} allows us to define cup product and cap product on Tate-Hochschild (co)homology groups:
for $A$-bimodules $M$ and $N$, we define a graded $k$-linear map
\begin{align} \label{eq:6}
\smile : \sHom_{\Ae}(\T, M) \otimes \sHom_{\Ae}(\T, N) \rightarrow \sHom_{\Ae}(\T, M \otimes_{A} N)
\end{align}
by $u \smile v := (u \hotimes_{A} v) \widehat{\Delta}$ for homogeneous elements $u \in \sHom_{\Ae}(\T, M)$ and $v \in \sHom_{\Ae}(\T, N)$.
One can easily check that $\smile$ is a chain map, so that it induces an operator
\begin{align} \label{eq:8}
\smile: \hHh^{*}(A, M) \otimes \hHh^{*}(A, N) \rightarrow \hHh^{*}(A, M \otimes_{A} N).
\end{align}
For $u \in \hHh^{r}(A, M)$ and $v \in \hHh^{s}(A, N)$, we call $u \smile v\in \hHh^{r+s}(A, M \otimes_{A} N)$ the \textit{cup product} of $u$ and $v$.
On the other hand, consider the composition
\begin{align*}
    \frown: \sHom_{\Ae}(\T, M) \otimes (\T \otimes_{\Ae} N) \rightarrow \T \otimes_{\Ae}(M \otimes_{A} N)
\end{align*}
of two chain maps
\begin{align*}
    \id &\otimes(\widehat{\Delta} \otimes_{\Ae} \id): \\
    &\sHom_{\Ae}(\T, M) \otimes (\T \otimes_{\Ae} N) \rightarrow \sHom_{\Ae}(\T, M) \otimes ((\T \hotimes_{A} \T) \otimes_{\Ae} N)
\end{align*}
and
\begin{align*}
    \gamma: \sHom_{\Ae}(\T, M) \otimes ((\T \hotimes_{A} \T) \otimes_{\Ae} N) \rightarrow \T \otimes_{\Ae}(M \otimes_{A} N)
\end{align*}
given by 
\begin{align*}
    u \otimes((x \hotimes_{A} y) \otimes_{\Ae} n) \mapsto (-1)^{|u||x|} x \otimes_{\Ae}( u(y) \otimes_{A} n).
\end{align*}
Then the composition $\frown$ induces an operator
\begin{align} \label{eq:10}
\frown: \hHh^{r}(A, M) \otimes \hHh_{s}(A, N) \rightarrow \hHh_{s-r}(A, M \otimes_{A} N).
\end{align}
For $u \in \hHh^{r}(A, M)$ and $z \in \hHh_{s}(A, N)$, we call $u \frown z \in \hHh_{s-r}(A, M \otimes_{A} N)$ the \textit{cap product} of $u$ and $z$.

Now, we will show the uniqueness of the cup product $\smile$ and of the cap product $\frown$, that is, 
each of them does not depend on the choice of a complete resolution and a diagonal approximation.
First, we will deal with the cup product.
%
%
\begin{prop} \label{prop:1}
The cup product $\smile$ satisfies the following three properties.
\begin{enumerate}
\item[(PI)]
    Let $M$ and $N$ be $A$-bimodules. Then there exists a commutative square
    \[
    \xymatrix{
    \hHh^{0}(A, M) \otimes \hHh^{0}(A, N) \ar[r]^{\smile} \ar[d]&
    \hHh^{0}(A, M \otimes_{A} N) \ar[d] \\
    M^{A}/N_{A}(M) \otimes N^{A}/N_{A}(N) \ar[r]   &
    (M \otimes_{A} N)^{A}/N_{A}(M \otimes_{A} N)
    }\]
    where the vertical morphisms are isomorphisms in $(\ref{eq:5})$ and the morphism in the bottom row is given by
    \[
    (m + N_{A}(M)) \otimes (n + N_{A}(N)) \mapsto m \otimes_{A} n + N_{A}(M \otimes_{A} N).
    \]
\item[(PII$_{1}$)]
    Let 
    \begin{align*}
        &0 \rightarrow M_{1} \rightarrow M_{2} \rightarrow M_{3} \rightarrow 0, \\
        &0 \rightarrow M_{1} \otimes_{A} N \rightarrow M_{2} \otimes_{A} N \rightarrow M_{3} \otimes_{A} N \rightarrow 0
    \end{align*}
    be exact sequences of $A$-bimodules. Then we have 
    \[
    \partial(\gamma \smile \xi) = (\partial \gamma) \smile \xi
    \]
    for all $\gamma \in \hHh^{r}(A, M_{3})$ and $\xi \in \hHh^{s}(A, N)$, 
    where $\partial$ denotes the connecting homomorphism.
\item[(PII$_{2}$)]
    Let 
    \begin{align*}
        &0 \rightarrow N_{1} \rightarrow N_{2} \rightarrow N_{3} \rightarrow 0, \\
        &0 \rightarrow M \otimes_{A} N_{1} \rightarrow M \otimes_{A} N_{2} \rightarrow M \otimes_{A} N_{3} \rightarrow 0
    \end{align*}
    be exact sequences of $A$-bimodules. Then we have 
    \[
    \partial(\gamma \smile \xi) = (-1)^{r}  \gamma \smile (\partial \xi )
    \]
    for all $\gamma \in \hHh^{r}(A, M)$ and $\xi \in \hHh^{s}(A, N_{3})$, 
    where $\partial$ denotes the connecting homomorphism.
\end{enumerate}
\end{prop}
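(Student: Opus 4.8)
The plan is to verify the three properties directly on the cochain level, exploiting the explicit formula $u \smile v = (u \hotimes_{A} v)\widehat{\Delta}$ together with the two basic facts that $\widehat{\Delta}$ is a chain map and satisfies $(\varepsilon \hotimes_{A} \varepsilon)\widehat{\Delta} = \varepsilon$. First I would record the homological setup common to (PII$_1$) and (PII$_2$): since every $T_{i}$ is finitely generated projective over $\Ae$, applying $\sHom_{\Ae}(\T, -)$ to any short exact sequence of $A$-bimodules yields a short exact sequence of cochain complexes, hence a connecting homomorphism $\partial$; the same applies after $- \otimes_{A} N$, where exactness is furnished by the hypothesis, and likewise after $M \otimes_{A} -$.

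For (PI) I would unwind the degree-$0$ part. A cocycle in $\sHom_{\Ae}(\T, M)^{0}$ is a map $u_{0}\colon T_{0} \to M$ with $u_{0}d_{1}^{\T}=0$; since $\T_{\geq 0}$ resolves $A$ it factors uniquely as $u_{0} = \bar{u}\,\varepsilon$ with $\bar{u} \in \Hom_{\Ae}(A, M) \cong M^{A}$, and the isomorphism $\hHh^{0}(A, M) \cong M^{A}/N_{A}(M)$ of $(\ref{eq:5})$ is induced in exactly this way. Writing $\widehat{\Delta}$ in components $\widehat{\Delta}^{(n)}_{p}\colon T_{n} \to T_{n-p}\otimes_{A}T_{p}$, only $\widehat{\Delta}^{(0)}_{0}$ contributes to $(u\smile v)_{0}$ for degree-$0$ classes, and the augmentation-preserving condition restricts to $(\varepsilon\otimes_{A}\varepsilon)\widehat{\Delta}^{(0)}_{0} = \varepsilon$. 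Hence for $u_{0}=\bar{u}\varepsilon$ and $v_{0}=\bar{v}\varepsilon$ one gets $(u\smile v)_{0} = (\bar{u}\otimes_{A}\bar{v})(\varepsilon\otimes_{A}\varepsilon)\widehat{\Delta}^{(0)}_{0} = (\bar{u}\otimes_{A}\bar{v})\varepsilon$, so the class of $u\smile v$ corresponds to $\bar{u}\otimes_{A}\bar{v} \in \Hom_{\Ae}(A, M\otimes_{A}N)$, which under the canonical isomorphism $M\otimes_{A}N \cong A\otimes_{A}(M\otimes_{A}N)$ sends $1$ to $\bar{u}(1)\otimes_{A}\bar{v}(1)$; this is precisely the map in the bottom row, so the square commutes.

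For (PII$_1$) I would choose cocycle representatives $\widetilde{\gamma} \in \sHom_{\Ae}(\T, M_{3})^{r}$ and $\widetilde{\xi} \in \sHom_{\Ae}(\T, N)^{s}$, and a lift $\widehat{\gamma} \in \sHom_{\Ae}(\T, M_{2})^{r}$ of $\widetilde{\gamma}$ along $M_{2}\twoheadrightarrow M_{3}$ (possible since $T_{r}$ is projective), so that $d\widehat{\gamma}$ is the image under $\sHom_{\Ae}(\T, M_{1})\hookrightarrow\sHom_{\Ae}(\T, M_{2})$ of a cocycle $\delta$ representing $\partial\gamma$. Consider $(\widehat{\gamma}\hotimes_{A}\widetilde{\xi})\widehat{\Delta} \in \sHom_{\Ae}(\T, M_{2}\otimes_{A}N)$; it maps to the representative $(\widetilde{\gamma}\hotimes_{A}\widetilde{\xi})\widehat{\Delta}$ of $\gamma\smile\xi$, and since $\widehat{\Delta}$ is a chain map, the Leibniz rule for $\hotimes_{A}$ with $d\widetilde{\xi}=0$ gives
\[
d\big((\widehat{\gamma}\hotimes_{A}\widetilde{\xi})\widehat{\Delta}\big) = \big((d\widehat{\gamma})\hotimes_{A}\widetilde{\xi}\big)\widehat{\Delta} = (\iota\otimes_{A}\id_{N})\,\big((\delta\hotimes_{A}\widetilde{\xi})\widehat{\Delta}\big),
\]
where $\iota\colon M_{1}\to M_{2}$. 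By definition of the connecting homomorphism for $0 \to M_{1}\otimes_{A}N \to M_{2}\otimes_{A}N \to M_{3}\otimes_{A}N \to 0$ this reads $\partial(\gamma\smile\xi) = [(\delta\hotimes_{A}\widetilde{\xi})\widehat{\Delta}] = (\partial\gamma)\smile\xi$. For (PII$_2$) one argues symmetrically, lifting $\widetilde{\xi}$ from $N_{3}$ to $N_{2}$; now the Leibniz rule produces the extra sign from $d(\widetilde{\gamma}\hotimes_{A}\widehat{\xi}) = (-1)^{|\widetilde{\gamma}|}\,\widetilde{\gamma}\hotimes_{A}(d\widehat{\xi})$ (using $d\widetilde{\gamma}=0$), and since $\widetilde{\gamma}$ has chain degree $-r$ one gets $(-1)^{|\widetilde{\gamma}|}=(-1)^{r}$, whence $\partial(\gamma\smile\xi) = (-1)^{r}\gamma\smile(\partial\xi)$.

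I expect the only real care to lie in the sign bookkeeping — the Koszul signs hidden in the complete tensor product $\hotimes_{A}$ of graded maps, and the degree convention identifying $\hHh^{r}$ with chain degree $-r$ — together with, for (PI), checking that the isomorphism $\hHh^{0}(A,M)\cong M^{A}/N_{A}(M)$ used is indeed the one induced by the factorization through the augmentation; beyond these points the argument is a formal diagram chase that relies only on Theorem~\ref{theo:1} and the chain-map property of $\widehat{\Delta}$.
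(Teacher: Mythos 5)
Your proof is correct and follows essentially the same route as the paper: (PI) is read off from the augmentation-preserving identity $(\varepsilon \hotimes_{A} \varepsilon)\widehat{\Delta}=\varepsilon$ in degree $0$, and (PII$_{1}$), (PII$_{2}$) come from the fact that cupping with a fixed cocycle is a chain map between the relevant short exact sequences of Hom complexes. The only cosmetic difference is that the paper applies naturality of the connecting homomorphism to the chain map $-\smile\alpha$ in one stroke, whereas you unroll that naturality as an explicit snake-lemma computation with lifts; your sign $(-1)^{r}$ in (PII$_{2}$) is the correct outcome of the convention $|\widetilde{\gamma}|=-r$.
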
 
\begin{proof}
We only prove that the cup product satisfies (PI) and (PII$_{1}$);
the proof of (PII$_{2}$) is similar to (PII$_{1}$).
Let $\T$ be a complete resolution of $A$ and $\widehat{\Delta}: \T \rightarrow \T \hotimes_{A} \T$ a diagonal approximation.
The property that $(\varepsilon \hotimes_{A} \varepsilon) \widehat{\Delta} = \varepsilon$ implies that the cup product satisfies (PI).

For (PII$_{1}$), let $\alpha \in \sHom_{\Ae}(\T, M)^{s}$ be a cocycle representing $\xi \in \hHh^{s}(A, N)$.
Then the graded map 
\[
-\smile \alpha: \sHom_{\Ae}(\T, M_{1}) \rightarrow \sHom_{\Ae}(\T, M_{1} \otimes_{A} N)
\]
induced by $\alpha$ is a chain map. 
Thus, we have a commutative diagram of chain complexes with exact rows \\[3pt]
\scalebox{0.87}{
\xymatrix@R=25pt@C=17pt{
0 \ar[r] &
\sHom_{\Ae}(\T, M_{1}) \ar[r] \ar[d]^-{-\smile \alpha} &
\sHom_{\Ae}(\T, M_{2}) \ar[r] \ar[d]^-{-\smile \alpha} &
\sHom_{\Ae}(\T, M_{3}) \ar[r] \ar[d]^-{-\smile \alpha} &
0 \\
0 \ar[r] &
\sHom_{\Ae}(\T, M_{1} \otimes_{A} N) \ar[r] &
\sHom_{\Ae}(\T, M_{2} \otimes_{A} N) \ar[r] &
\sHom_{\Ae}(\T, M_{3} \otimes_{A} N) \ar[r] &
0
}}\\[3pt]
The property (PII$_{1}$) follows from the naturality of connecting homomorphism.
\end{proof}
%
%

%
%
\begin{theo}[{\cite[Theorem 2.1]{Sanada92}}] \label{theo:2}
Any two cup products satisfying the three properties $({\rm PI})$--\,$({\rm PII}_{2})$ coincide up to isomorphism.
\end{theo}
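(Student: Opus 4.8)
The plan is to reduce the coincidence statement to a uniqueness argument of the type familiar from Tate cohomology of finite groups (as in \cite[Chapter VI]{Brown94}), using the dimension-shifting isomorphisms of Corollary \ref{cor:2} together with the normalization property (PI). Let $\smile$ and $\smile'$ be two cup products, each satisfying (PI)--(PII$_2$), possibly defined via different complete resolutions and diagonal approximations. For fixed $A$-bimodules $M$ and $N$ and fixed integers $r,s$, I want to show the two maps
\[
\hHh^{r}(A, M) \otimes \hHh^{s}(A, N) \rightrightarrows \hHh^{r+s}(A, M \otimes_{A} N)
\]
agree. The idea is to perform a double dimension shift: using the exact sequences of Sanada ($0 \to K(M) \to A \otimes M \to M \to 0$ and its variants) and the induced connecting isomorphisms $\partial$ from Corollary \ref{cor:2}, one moves both arguments down (or up) until one of the degrees becomes $0$, where (PI) pins down the product uniquely.

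First I would treat the case $r = s = 0$: here (PI) says that on $\hHh^{0}(A, M) \otimes \hHh^{0}(A, N)$ both products are computed by the same explicit formula $(m + N_A(M)) \otimes (n + N_A(N)) \mapsto m \otimes_A n + N_A(M \otimes_A N)$ through the fixed isomorphisms $(\ref{eq:5})$, so $\smile = \smile'$ there. Next, I would show that if two cup products agree in bidegree $(r,s)$, then they agree in bidegree $(r-1,s)$ and in bidegree $(r, s-1)$. For the first, apply (PII$_1$) to the short exact sequence $0 \to K'(M) \to M \otimes A \to M \to 0$ (which remains exact after $-\otimes_A N$ because it splits on one side, and whose middle term is weakly projective so its Tate-Hochschild cohomology vanishes by Lemma \ref{lem:6}); then the connecting map $\partial \colon \hHh^{r-1}(A,M) \xrightarrow{\sim} \hHh^{r}(A, K'(M))$ is an isomorphism, and (PII$_1$) gives $\partial(\gamma \smile \xi) = (\partial\gamma)\smile\xi$ for both products. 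Since $\partial\gamma$ lives in degree $r$ and $\partial$ is injective, agreement in bidegree $(r,s)$ for $K'(M)$ forces agreement in bidegree $(r-1,s)$ for $M$; one must also check the coefficient-module bookkeeping is consistent, i.e. that the relevant isomorphism $K'(M) \otimes_A N \cong$ (the appropriate bimodule) is compatible. For the second, use (PII$_2$) with $0 \to K'(N) \to N \otimes A \to N \to 0$ in the $N$-slot; the sign $(-1)^{r}$ in (PII$_2$) is the same for both products, so it cancels. Iterating these two reductions lets me bring any $(r,s)$ down to $(0,0)$, and since each step is an isomorphism that is natural (hence the same for $\smile$ and $\smile'$), agreement propagates back up.

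The main obstacle I expect is not the inductive skeleton — that is essentially Brown's argument transplanted — but rather verifying that the hypotheses (PII$_1$), (PII$_2$) genuinely apply at each stage: one needs, for the chosen Sanada sequence, that the tensored sequence $0 \to K'(M)\otimes_A N \to (M\otimes A)\otimes_A N \to M\otimes_A N \to 0$ is still short exact (this is where the one-sided splitting of Sanada's sequences is essential), and that $(M \otimes A)\otimes_A N$ remains weakly projective so that its Tate-Hochschild cohomology vanishes and $\partial$ is an isomorphism — the latter follows from Corollary \ref{cor:2}. A secondary subtlety is keeping track of the twists by the Nakayama automorphism: a priori the dimension-shift isomorphisms for negative degrees involve $\nu$, but since $\nu$-twisting preserves weak projectivity (as used in the proof of Corollary \ref{cor:1}) and commutes with the relevant Hom/tensor identifications, the argument goes through uniformly for all $r,s \in \Z$, whether positive or negative. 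Once all of this is in place, naturality of the connecting homomorphisms delivers $\smile = \smile'$ in every bidegree, and hence the two cup products coincide up to the fixed isomorphisms identifying the $\hHh$-groups.
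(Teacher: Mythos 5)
Your proposal is the standard dimension-shifting uniqueness argument, and that is exactly the intended proof: the paper does not prove Theorem \ref{theo:2} itself (it is quoted from Sanada's Theorem 2.1), but the proof it sketches for the cap-product analogue, Theorem \ref{theo:4}, is precisely your skeleton --- anchor at bidegree $(0,0)$ via (PI)/(QI), then shift both degrees using the connecting isomorphisms of Corollary \ref{cor:2}, which are isomorphisms because the middle terms of Sanada's sequences are weakly projective (Lemma \ref{lem:6}). So the approach is right and essentially complete.

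One concrete correction: for the shift in the \emph{first} variable you invoke (PII$_1$) with the sequence $0 \to K'(M) \to M \otimes A \to M \to 0$ and assert that it stays exact after $- \otimes_{A} N$ ``because it splits on one side.'' But that sequence is \emph{left} $A$-splitting, whereas $- \otimes_{A} N$ uses the \emph{right} $A$-module structure, so its splitting does not guarantee exactness of the tensored sequence. You should instead use the right $A$-splitting sequence $0 \to K(M) \to A \otimes M \to M \to 0$, whose tensored sequence is split exact and whose connecting map $\partial \colon \hHh^{i}(A,M) \xrightarrow{\sim} \hHh^{i+1}(A,K(M))$ is likewise supplied by Corollary \ref{cor:2}; your use of $K'(N)$ with $M \otimes_{A} -$ in the second variable is the correct pairing. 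With that swap (and, as you note, using the $C$-type sequences or the inverse shifts to move degrees in the other direction so that all of $\Z \times \Z$ is reached), the argument goes through.
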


Let us remark that a system of the properties $({\rm PI})$--\,$({\rm PII}_{2})$ may be originally seen in \cite{Kawa57}.

As a consequence of the two statements above, we have the following.

%
%
\begin{cor}
The cup product 
\begin{align*} 
\smile: \hHh^{*}(A, M) \otimes \hHh^{*}(A, N) \rightarrow \hHh^{*}(A, M \otimes_{A} N).
\end{align*} 
does not depend on the choice of a complete resolution and a diagonal approximation.
\end{cor}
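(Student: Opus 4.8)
The plan is to obtain the statement as a formal consequence of Proposition~\ref{prop:1} and Theorem~\ref{theo:2}, since the substantive work has already been done: Theorem~\ref{theo:1} guarantees that diagonal approximations exist, Proposition~\ref{prop:1} shows the resulting cup product obeys the three axioms, and Theorem~\ref{theo:2} says those axioms determine the cup product up to isomorphism. Concretely, fix complete resolutions $\T$ and $\T^{\prime}$ of $A$ together with diagonal approximations $\widehat{\Delta}\colon \T \to \T \hotimes_{A} \T$ and $\widehat{\Delta}^{\prime}\colon \T^{\prime} \to \T^{\prime} \hotimes_{A} \T^{\prime}$; these data define cup products $\smile$ and $\smile^{\prime}$ on $\hHh^{*}(A,-)$, and the goal is to show that the canonical comparison isomorphisms intertwine them.

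First I would set up the comparison isomorphisms and check that they are canonical. By Lemmas~\ref{lem:7} and~\ref{lem:8} there is an augmentation-preserving chain map $f\colon \T \to \T^{\prime}$, and any two such maps are homotopic by the uniqueness parts of those lemmas; moreover, as recalled in Section~\ref{sec:2}, every augmentation-preserving chain map between complete resolutions of $A$ is a homotopy equivalence. Hence $\sHom_{\Ae}(f,-)$ induces a well-defined isomorphism $\hHh^{*}(A,M) \xrightarrow{\sim} \hHh^{*}(A,M)$ depending only on $\T$ and $\T^{\prime}$, and likewise for the coefficient bimodule $M \otimes_{A} N$. Transporting $\smile$ and $\smile^{\prime}$ along these identifications yields two graded $k$-bilinear pairings on the same groups $\hHh^{*}(A,M) \otimes \hHh^{*}(A,N) \to \hHh^{*}(A, M \otimes_{A} N)$.

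Next I would invoke Proposition~\ref{prop:1}. Its proof uses only the defining identity $(\varepsilon \hotimes_{A} \varepsilon)\widehat{\Delta} = \varepsilon$ and the naturality of connecting homomorphisms for short exact sequences of bimodules, so it applies verbatim to both $(\T,\widehat{\Delta})$ and $(\T^{\prime},\widehat{\Delta}^{\prime})$; thus $\smile$ and $\smile^{\prime}$ each satisfy (PI), (PII$_{1}$) and (PII$_{2}$). Applying Theorem~\ref{theo:2} then gives that $\smile$ and $\smile^{\prime}$ coincide up to the canonical isomorphisms, which is exactly the assertion of the corollary.

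There is no genuine obstacle here, only bookkeeping: the point requiring care is the second paragraph, namely confirming that the comparison isomorphisms are independent of the chosen augmentation-preserving chain maps and are compatible with the tensor product of coefficient bimodules, so that the phrase ``coincide up to isomorphism'' refers to a single well-defined pairing. Everything else is imported directly from Theorems~\ref{theo:1} and~\ref{theo:2} and Proposition~\ref{prop:1}, so no further computation is needed.
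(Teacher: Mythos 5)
Your proposal is correct and follows exactly the paper's route: the corollary is stated there as an immediate consequence of Proposition~\ref{prop:1} (any diagonal approximation yields a cup product satisfying (PI)--(PII$_{2}$)) and Theorem~\ref{theo:2} (such a product is unique up to isomorphism). Your additional care in paragraph two about the canonicity of the comparison isomorphisms is a reasonable elaboration of what the paper leaves implicit, but introduces nothing beyond the paper's argument.
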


We know that the cup product satisfies the following properties.

%
%
\begin{theo}[{\cite[Propositions 2.3 and 2.4]{Sanada92}}] \label{theo:9}
There exists one diagonal approximation associated with a certain complete resolution $\mathbf{X} \xrightarrow{\varepsilon} A$ such that
\begin{enumerate}
\item[(i)]
The induced cup product $\smile$ is associative, i.e.,
\[
(u_{1} \smile u_{2}) \smile u_{3} = u_{1} \smile ( u_{2} \smile u_{3} )
\]
for $u_{i} \in \hHh^{*}(A, M_{i})$ with an $A$-bimodule $M_{i}$.
\item[(ii)]
The induced cup product $\smile$ endows a graded vector space  
\[
\hHh^{\bullet}(A, A) := \bigoplus_{i \in \Z} \hHh^{i}(A, A) = \bigoplus_{i \in \Z} \Hh^{i}(\sHom_{\Ae}(\mathbf{X}, A))
\]
with a graded commutative algebra structure whose unit is the element represented by the augmentation $\varepsilon$.
\end{enumerate}
\end{theo}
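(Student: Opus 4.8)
The plan is to exhibit an explicit complete resolution $\mathbf{X}$ of $A$ together with a diagonal approximation for which the induced cup product is visibly associative and graded commutative, rather than trying to transport these properties through an abstract uniqueness argument. Since Theorem~\ref{theo:2} already guarantees that any two cup products satisfying (PI)--(PII$_2$) agree up to isomorphism, and Proposition~\ref{prop:1} shows \emph{every} cup product satisfies those three properties, it suffices to prove associativity and graded commutativity for a \emph{single} convenient choice; these properties then transfer to the cup product on $\hHh^{\bullet}(A,A)$ defined via any complete resolution and any diagonal approximation. The natural candidate for $\mathbf{X}$ is the \emph{complete bar resolution} of $A$: its non-negative truncation is the normalized bar resolution $\mathrm{Bar}(A)$ of Example~\ref{example:1}, and its negative part is obtained, as in Lemma~\ref{lem:2}, by $\Ae$-dualizing and twisting by the Nakayama automorphism $\nu$; this is precisely Sanada's construction.

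The key steps would be as follows. First, I would recall (or rederive) from Sanada~\cite{Sanada92} the explicit diagonal approximation $\widehat{\Delta}\colon \mathbf{X}\to \mathbf{X}\hotimes_A\mathbf{X}$ on the complete bar resolution: in non-negative degrees it restricts to the Alexander--Whitney-type map of Example~\ref{example:1}, namely
\[
\widehat{\Delta}(a_0\otimes\overline{a}_{1,n}\otimes a_{n+1})
=\sum_{i=0}^{n}(a_0\otimes\overline{a}_{1,i}\otimes 1)\otimes_A(1\otimes\overline{a}_{i+1,n}\otimes a_{n+1}),
\]
and the negative-degree components are determined (up to homotopy, by Theorem~\ref{theo:1}) by dualizing this formula through the isomorphism $\mathbf{X}_{<0}\cong {}_1((\boldsymbol\Sigma\P)^\vee)_\nu$. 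Second, I would verify that the induced cochain-level product $u\smile v=(u\hotimes_A v)\widehat{\Delta}$ is \emph{strictly associative} on $\sHom_{\Ae}(\mathbf{X},M_1)\otimes\sHom_{\Ae}(\mathbf{X},M_2)\otimes\sHom_{\Ae}(\mathbf{X},M_3)$; this reduces to the coassociativity identity $(\widehat{\Delta}\hotimes_A\id)\widehat{\Delta}=(\id\hotimes_A\widehat{\Delta})\widehat{\Delta}$, which holds on the nose for the Alexander--Whitney map in non-negative degrees and, by the uniqueness-up-to-homotopy part of Theorem~\ref{theo:1} applied to the acyclic, dimensionwise-projective complex $\mathbf{X}\hotimes_A\mathbf{X}\hotimes_A\mathbf{X}$, holds up to homotopy in general — which already suffices on cohomology. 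This gives part (i). Third, for graded commutativity (part (ii)) I would invoke the corresponding result already established in the literature: Wang~\cite{Wang18} shows the singular Hochschild cohomology ring is graded commutative, and — anticipating the Main Theorem~\ref{theo:7} which identifies $\hHh^{\bullet}(A,A)$ with $\HH^{\bullet}_{\mathrm{sg}}(A,A)$ as graded algebras — graded commutativity of the Tate-Hochschild ring follows; alternatively, one checks directly that the flip $\mathbf{X}\hotimes_A\mathbf{X}\to\mathbf{X}\hotimes_A\mathbf{X}$, $x\otimes_A y\mapsto (-1)^{|x||y|}y\otimes_A x$, composed with $\widehat{\Delta}$ is again a diagonal approximation, hence homotopic to $\widehat{\Delta}$ by Theorem~\ref{theo:1}, yielding $u\smile v=(-1)^{|u||v|}v\smile u$ on cohomology. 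Finally, I would observe that the class represented by the augmentation $\varepsilon\in\sHom_{\Ae}(\mathbf{X},A)^0$ is a two-sided unit, since $(\varepsilon\hotimes_A\varepsilon)\widehat{\Delta}=\varepsilon$ forces $\varepsilon\smile u=u\smile\varepsilon=u$ at the cochain level after identifying $A\otimes_A M\cong M$.

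The main obstacle I anticipate is bookkeeping in the negative-degree part of $\widehat{\Delta}$: the Alexander--Whitney formula is clean and strictly coassociative on $\mathrm{Bar}(A)$, but its extension across degree $0$ into the ``backward'' bar resolution ${}_1((\boldsymbol\Sigma\P)^\vee)_\nu$ involves the norm map, the dual bases $\{u_i\},\{v_i\}$, and the Nakayama twist, and one must confirm both that it is genuinely a chain map and that coassociativity (and cocommutativity) survive — if only up to homotopy. Here I would lean heavily on Theorem~\ref{theo:1}: rather than writing out the negative components explicitly, I would argue abstractly that any two diagonal approximations, and any two ways of iterating them, are homotopic because the relevant target complexes ($\mathbf{X}\hotimes_A\mathbf{X}$ and its triple analogue) are acyclic and dimensionwise projective as $A$-bimodules, so Lemma~\ref{lem:4} applies. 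This keeps the argument conceptual and confines the only unavoidable explicit computation to the non-negative part, where it coincides with the classical Hochschild story.
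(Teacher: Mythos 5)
The paper does not actually prove this statement: Theorem \ref{theo:9} is quoted verbatim from Sanada \cite[Propositions 2.3 and 2.4]{Sanada92}, where the complete bar resolution and an explicit diagonal approximation on it are constructed and the identities are verified by direct computation. So your proposal has to stand on its own, and it has one genuine gap. Your ``alternative'' argument for graded commutativity --- composing $\widehat{\Delta}$ with the flip $x\otimes_{A}y\mapsto(-1)^{|x||y|}y\otimes_{A}x$ and invoking uniqueness of diagonal approximations --- does not work here, because that flip is not a well-defined morphism of $A$-bimodules: from $xa\otimes_{A}y=x\otimes_{A}ay$ one would need $y\otimes_{A}xa=ay\otimes_{A}x$, which fails in general. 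The flip argument is available in Brown's group-cohomology setting, where the tensor product is over $k$ with diagonal action, but not for $\hotimes_{A}$; this is precisely why graded commutativity of (Tate-)Hochschild cohomology is a nontrivial theorem requiring a Gerstenhaber-style explicit homotopy (which is what Sanada's Proposition 2.4 supplies on the complete bar resolution). Your fallback route --- deducing commutativity from Wang's result via the Main Theorem \ref{theo:7} --- is circular inside this paper: the proof of Theorem \ref{theo:7} goes through Lemma \ref{lem:12}(1), whose statement that the comparison map is an isomorphism ``of graded commutative algebras'' presupposes that $\hHh^{\bullet}(A,A)$ already carries the ring structure asserted in Theorem \ref{theo:9}. (One could salvage a non-circular variant by first proving only that the bijection of Lemma \ref{lem:12} is multiplicative and then transporting associativity and commutativity from the Eu--Schedler side, but that inverts the paper's logical order and you would need to check that none of the ingredients of Lemma \ref{lem:12} --- e.g.\ $[\varepsilon]=1$ in Theorem \ref{theo:3} --- secretly uses Theorem \ref{theo:9}.)

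Two smaller points. Your associativity argument via coassociativity up to homotopy is sound in outline, but it needs an analogue of Lemma \ref{lem:3} for the triple product $\mathbf{X}\hotimes_{A}\mathbf{X}\hotimes_{A}\mathbf{X}$ (acyclicity and dimensionwise projectivity; the components are now infinite products of projectives, which are still projective over a self-injective algebra since projective $=$ injective and products of injectives are injective), together with a uniqueness-of-chain-maps statement for maps into that complex, not just the existence clause of Lemma \ref{lem:4}. Finally, the identity $(\varepsilon\hotimes_{A}\varepsilon)\widehat{\Delta}=\varepsilon$ does \emph{not} force $\varepsilon\smile u=u$ at the cochain level: $(\varepsilon\hotimes_{A}u)\widehat{\Delta}$ involves the component $\widehat{\Delta}^{(m)}_{m}\colon T_{m}\to T_{0}\otimes_{A}T_{m}$, which need not be $\eta_{0}\otimes_{A}\mathrm{id}$ on the nose; unitality holds because $(\varepsilon\hotimes_{A}\mathrm{id})\widehat{\Delta}$ is an augmentation-preserving self-map of $\mathbf{X}$ and hence homotopic to the identity by the comparison lemmas, so $[\varepsilon]\smile[u]=[u]$ only in cohomology.
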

%
%

Secondly, we deal with the cap product in an analogous way to the cup product.
%
%
\begin{prop} \label{prop:5}
The cap product $\frown$ satisfies the following three properties.
\begin{enumerate}
\item[(QI)]
    Let $M$ and $N$ be $A$-bimodules. Then there exists a commutative square
    \[
    \xymatrix{
    \hHh^{0}(A, M) \otimes \hHh_{0}(A, N) \ar[r]^{\frown} \ar[d]&
    \hHh_{0}(A, M \otimes_{A} N) \ar[d] \\
    M^{A}/N_{A}(M) \otimes {}_{N_{A}}N /I_{A}(N) \ar[r]   &
    {}_{N_{A}}(M \otimes_{A} N)/I_{A}(M \otimes_{A} N)
    }\]
    where the vertical morphisms are isomorphisms in $(\ref{eq:5})$ and the morphism in the bottom row is given by
    \[
    (m + N_{A}(M)) \otimes (n + I_{A}(N)) \mapsto m \otimes_{A} n + I_{A}(M \otimes_{A} N).
    \]
\item[(QII$_{1}$)]
    Let 
    \begin{align*}
        &0 \rightarrow M_{1} \rightarrow M_{2} \rightarrow M_{3} \rightarrow 0, \\
        &0 \rightarrow M_{1} \otimes_{A} N \rightarrow M_{2} \otimes_{A} N \rightarrow M_{3} \otimes_{A} N \rightarrow 0
    \end{align*}
    be exact sequences of $A$-bimodules. Then we have 
    \[
    \partial(\gamma \frown \xi) = (\partial \gamma) \frown \xi
    \]
    for all $\gamma \in \hHh^{r}(A, M_{3})$ and $\xi \in \hHh_{s}(A, N)$, 
    where $\partial$ denotes the connecting homomorphism.
\item[(QII$_{2}$)]
    Let 
    \begin{align*}
        &0 \rightarrow N_{1} \rightarrow N_{2} \rightarrow N_{3} \rightarrow 0, \\
        &0 \rightarrow M \otimes_{A} N_{1} \rightarrow M \otimes_{A} N_{2} \rightarrow M \otimes_{A} N_{3} \rightarrow 0
    \end{align*}
    be exact sequences of $A$-bimodules. Then we have 
    \[
    \partial(\gamma \frown \xi) = (-1)^{r}  \gamma \frown (\partial \xi )
    \]
    for all $\gamma \in \hHh^{r}(A, M)$ and $\xi \in \hHh_{s}(A, N_{3})$, 
    where $\partial$ denotes the connecting homomorphism.
\end{enumerate}
\end{prop}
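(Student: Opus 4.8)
The plan is to mirror exactly the argument used for the cup product in Proposition \ref{prop:1}, transporting each of the three verifications (QI), (QII$_1$), (QII$_2$) to the cap product setting. Fix a complete resolution $\T$ of $A$ together with a diagonal approximation $\widehat{\Delta}: \T \rightarrow \T \hotimes_{A} \T$ as provided by Theorem \ref{theo:1}, and recall that the cap product is induced by the composite chain map $\frown = \gamma \circ (\id \otimes (\widehat{\Delta} \otimes_{\Ae} \id))$. For (QI), the key point is again the defining identity $(\varepsilon \hotimes_{A} \varepsilon)\widehat{\Delta} = \varepsilon$: in degree zero the relevant truncation $\widehat{\Delta}^{(0)}_{0}: T_0 \rightarrow T_0 \otimes_A T_0$ satisfies $(\varepsilon \otimes_A \varepsilon)\widehat{\Delta}^{(0)}_0 = \varepsilon$, so unwinding the formula for $\gamma$ on a cocycle representing a class in $\hHh^0(A,M)$ and a cycle representing a class in $\hHh_0(A,N)$, and comparing with the explicit descriptions of $\hHh^0$ and $\hHh_0$ via $M^A/N_A(M)$ and ${}_{N_A}N/I_A(N)$ coming from $(\ref{eq:5})$, yields the asserted commutative square. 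This step is essentially a bookkeeping computation using the explicit "beginning of $\T$" displayed before $(\ref{eq:3})$.

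For (QII$_1$) and (QII$_2$), I would argue via naturality of the connecting homomorphism, exactly as in the proof of Proposition \ref{prop:1}. Given a short exact sequence $0 \to M_1 \to M_2 \to M_3 \to 0$ of $A$-bimodules which remains exact after $-\otimes_A N$ (that is the hypothesis), and a cocycle $\alpha \in \sHom_{\Ae}(\T, M_3)^{r}$ — wait: more precisely, fix a cycle $\beta \in (\T \otimes_{\Ae} N)_s$ representing $\xi \in \hHh_s(A,N)$, and observe that the graded map
\[
-\frown \beta: \sHom_{\Ae}(\T, M_1) \rightarrow \T \otimes_{\Ae}(M_1 \otimes_A N)
\]
is a chain map (of degree $-s$), because $\frown$ is a chain map and $\beta$ is a cycle. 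Since tensoring the short exact sequence of bimodules with the dimensionwise projective complex $\T$ over $\Ae$ preserves exactness, and $\sHom_{\Ae}(\T, -)$ also preserves short exact sequences (as each $T_i$ is projective), we obtain a commutative diagram of short exact sequences of chain complexes
\[
\begin{array}{ccccccccc}
0 & \to & \sHom_{\Ae}(\T, M_1) & \to & \sHom_{\Ae}(\T, M_2) & \to & \sHom_{\Ae}(\T, M_3) & \to & 0 \\
 & & \downarrow & & \downarrow & & \downarrow & & \\
0 & \to & \T \otimes_{\Ae}(M_1 \otimes_A N) & \to & \T \otimes_{\Ae}(M_2 \otimes_A N) & \to & \T \otimes_{\Ae}(M_3 \otimes_A N) & \to & 0
\end{array}
\]
in which the vertical maps are $-\frown \beta$. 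The naturality of the long exact sequence in homology then gives $\partial(\gamma \frown \xi) = (\partial\gamma) \frown \xi$, which is (QII$_1$). For (QII$_2$) one instead fixes a cocycle representing $\gamma \in \hHh^{r}(A,M)$, forms the chain map $\gamma \frown -$ from $\T \otimes_{\Ae} N_1$ to $\T \otimes_{\Ae}(M \otimes_A N_1)$, and applies naturality again; the sign $(-1)^{r}$ appears exactly as in the cup-product case because the connecting homomorphism is being commuted past a map of degree $r$ (one tracks the Koszul sign through the definition of $\gamma$ in $\frown$).

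The main obstacle, as in Proposition \ref{prop:1}, is checking exactness of the bottom rows of these diagrams. For (QII$_1$) I must know that $0 \to \T \otimes_{\Ae}(M_1 \otimes_A N) \to \T \otimes_{\Ae}(M_2 \otimes_A N) \to \T \otimes_{\Ae}(M_3 \otimes_A N) \to 0$ is exact; here I would use that by hypothesis $0 \to M_1 \otimes_A N \to M_2 \otimes_A N \to M_3 \otimes_A N \to 0$ is exact, together with the fact that each $T_i$ is projective — hence flat — over $\Ae$, so that $T_i \otimes_{\Ae} -$ is exact, and taking the total complex preserves this. For (QII$_2$) the analogous point is that $\T \otimes_{\Ae} -$ applied to $0 \to N_1 \to N_2 \to N_3 \to 0$ is exact for the same flatness reason. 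Everything else is formal manipulation of the definitions of $\frown$, $\gamma$, and $\widehat{\Delta}$, parallel to the cup-product proof, so I would phrase the write-up as "the proof is entirely analogous to that of Proposition \ref{prop:1}, replacing $\sHom_{\Ae}(\T,-)$ by $\T \otimes_{\Ae} -$ where appropriate and using flatness of the $T_i$ over $\Ae$ in place of projectivity", and spell out only the sign check in (QII$_2$) and the degree-zero identification in (QI).
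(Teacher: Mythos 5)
Your proposal is correct and follows exactly the route the paper intends: the paper's own proof of this proposition consists of the single remark that it is analogous to Proposition \ref{prop:1}, and your transported argument (degree-zero identification via $(\varepsilon \hotimes_{A} \varepsilon)\widehat{\Delta} = \varepsilon$ for (QI), naturality of the connecting homomorphism applied to the chain maps $-\frown\beta$ and $\gamma\frown-$ for (QII$_{1}$) and (QII$_{2}$), with the $(-1)^{r}$ arising from commuting the differential past a degree-$r$ cocycle) is precisely that analogue, spelled out. The flatness/projectivity observations you add to justify exactness of the rows are correct and harmless.
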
 
\begin{proof}
The proof of this proposition is similar to that of Proposition $\ref{prop:1}$.
\end{proof} 
%
%

%
%
\begin{theo} \label{theo:4}
There exists only one cap product satisfying the three properties $({\rm QI})$--\,$({\rm QII}_{2})$.
\end{theo}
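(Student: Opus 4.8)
The plan is to mirror exactly the strategy used for the cup product, namely to show that the three properties (QI)--(QII$_2$) pin down the cap product up to the canonical isomorphisms already established in Section \ref{sec:2}. First I would observe that, by Corollary \ref{cor:2}, the connecting homomorphisms $\partial$ and $\partial^{-1}$ coming from Sanada's four short exact sequences induce isomorphisms $\hHh^{r}(A,M)\cong\hHh^{0}(A,\Omega^{-r}M)$ and $\hHh_{s}(A,N)\cong\hHh_{0}(A,\Omega^{-s}N)$ for suitable iterated syzygy/cosyzygy bimodules (more precisely, applying the $K$, $K'$, $C$, $C'$ constructions the appropriate number of times). Given two cap products $\frown$ and $\frown'$ both satisfying (QI)--(QII$_2$), the dimension-shifting properties (QII$_1$) and (QII$_2$) let one reduce any identity $u\frown z = u\frown' z$ with $u\in\hHh^{r}(A,M)$, $z\in\hHh_{s}(A,N)$ to the corresponding identity in degree $(0,0)$, up to signs and up to the canonical isomorphisms.

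The key steps, in order, are: (1) fix $u\in\hHh^{r}(A,M)$ and $z\in\hHh_{s}(A,N)$ and choose exact sequences realizing $u=\partial^{\,a}(\tilde u)$ and $z=\partial^{\,b}(\tilde z)$ with $\tilde u\in\hHh^{0}$, $\tilde z\in\hHh_{0}$ of appropriately twisted bimodules; (2) apply (QII$_1$) repeatedly to strip the cohomological degree off $u$, and (QII$_2$) repeatedly to strip the homological degree off $z$, each application contributing a controlled sign $(-1)^{r}$; (3) invoke (QI) to conclude that $\tilde u\frown\tilde z$ and $\tilde u\frown'\tilde z$ agree, since both are forced to be the map on $M^{A}/N_A(M)\otimes {}_{N_A}N/I_A(N)\to{}_{N_A}(M\otimes_A N)/I_A(M\otimes_A N)$ induced by $m\otimes n\mapsto m\otimes_A n$; (4) reassemble via the isomorphisms of step (1)--(2) to deduce $u\frown z=u\frown' z$. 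One must also check that (QI)--(QII$_2$) are compatible in the sense that the reduction is independent of the choices of exact sequences made in step (1) --- but this is exactly the standard dimension-shifting bookkeeping, and it follows because any two such reductions differ by an isomorphism that is itself natural with respect to the connecting maps, so the ambiguity cancels.

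The main obstacle I expect is purely bookkeeping rather than conceptual: tracking the signs correctly through the iterated applications of (QII$_1$) and (QII$_2$), and verifying that the two auxiliary reductions (the one coming from the $K/K'$ sequences used to raise homological degree via $\partial$, and the one coming from the $C/C'$ sequences used via $\partial^{-1}$) can be carried out simultaneously and commute with one another. Concretely, one should check that $\partial$ on the $M$-variable and $\partial^{-1}$ on the $N$-variable interact through (QII$_1$) and (QII$_2$) in a way that is consistent, i.e.\ the order in which one shifts the two variables does not matter up to the predicted sign; this is the step where an error in a sign would propagate. Once the sign discipline from the proof of Theorem \ref{theo:2} is transported verbatim, the argument closes.

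\begin{proof}
The proof is entirely parallel to that of Theorem \ref{theo:2}: using the dimension-shifting isomorphisms $\partial$ and $\partial^{-1}$ of Corollary \ref{cor:2} together with the compatibility properties (QII$_1$) and (QII$_2$), one reduces any cap product $\gamma\frown\xi$ with $\gamma\in\hHh^{r}(A,M)$ and $\xi\in\hHh_{s}(A,N)$ to the case $r=s=0$, where it is completely determined by (QI). Hence two cap products satisfying (QI)--(QII$_2$) coincide.
\end{proof}
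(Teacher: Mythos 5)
Your proposal is correct and follows essentially the same route as the paper: the paper's proof also notes that (QI) forces agreement in degree $(0,0)$ and then invokes a dimension-shifting argument (citing Sanada) via the connecting isomorphisms of Corollary \ref{cor:2} together with (QII$_1$) and (QII$_2$) to propagate equality to all degrees. Your write-up merely makes explicit the bookkeeping with the $K$, $K'$, $C$, $C'$ sequences that the paper leaves to the reference.
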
 
\begin{proof}
Let $\frown$ and $\frown^{\prime}$ be any two cap products, and let $r, s$ be arbitrary integers.
The property $(\textrm{QI})$ implies that $\frown$ coincides with $\frown^{\prime}$ in the case $r = s = 0$.
A dimension-shifting argument analogue to that in \cite[pp.\,78--79]{Sanada92} yields that $\frown$ agrees with $\frown^{\prime}$ for any $r, s \in \Z$.
\end{proof} 
%
%

%
%
\subsection{Composition products} 
In this subsection, we will show, as in Hochschild theory, that 
the cup product and the cap product on Tate-Hochschild (co)homology coincide with some composition products.
The following proposition is crucial.
%
%
\begin{prop} \label{prop:3}
Let $\T$ be an acyclic chain complex of $($not necessarily finitely generated$)$ projective $A$-bimodules, 
$\T^{\prime} \xrightarrow{\varepsilon^{\prime}} A$ a complete resolution and 
$M$ an $A$-bimodule. Then the following statements hold.
\begin{enumerate}
\item[(1)]
    A chain map $\varepsilon^{\prime} \otimes_{A} \id_{M} : \T^{\prime} \otimes_{A} M \rightarrow M$ induces a quasi-isomorphism
\begin{align*}
    \sHom_{\Ae}(\T, \varepsilon^{\prime} \otimes_{A} \id_{M}): 
    \sHom_{\Ae}(\T, \T^{\prime} \otimes_{A} M) \rightarrow  \sHom_{\Ae}(\T, M).
\end{align*}
\item[(2)]
    If each component of $\T$ is finitely generated, then $\varepsilon^{\prime} \otimes_{A} \id_{M}$ induces a quasi-isomorphism
    \[
    \id_{\T} \hotimes_{\Ae} (\varepsilon^{\prime} \otimes_{A} \id_{M}):
    \T \hotimes_{\Ae} (\T^{\prime} \otimes_{A} M) \rightarrow \T \otimes_{\Ae} M.
    \]
\end{enumerate}
In particular, if $\T$ is a compete resolution of $A$, then we have isomorphisms
\begin{align*}
&\hHh^{*}(A, M) \cong \Hh^{*}(\sHom_{\Ae}(\T, \T^{\prime} \otimes_{A} M)), \\[3pt]
&\hHh_{*}(A, M) \cong \Hh^{*}(\T \hotimes_{\Ae} (\T^{\prime} \otimes_{A} M).
\end{align*}
\end{prop}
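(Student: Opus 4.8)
The plan is to treat the two parts uniformly by recognizing both as instances of the general principle that a quasi-isomorphism between complexes remains a quasi-isomorphism after applying $\sHom_{\Ae}(\T, -)$ or $\T \hotimes_{\Ae} -$ whenever $\T$ is suitably projective and the input complexes are suitably bounded. The key observation is that $\varepsilon^{\prime} \otimes_{A} \id_{M}: \T^{\prime} \otimes_{A} M \rightarrow M$ is a quasi-isomorphism: indeed, since $A$ is Frobenius, $A$ is projective (equivalently injective) as a one-sided module, so the augmented complex $\T^{\prime}_{\geq 0} \xrightarrow{\varepsilon^{\prime}} A$ is contractible as a complex of right $A$-modules, which means $\T^{\prime}_{\geq 0} \otimes_{A} M \rightarrow M$ is a homotopy equivalence, while the negative part $\T^{\prime}_{<0} \otimes_{A} M$ contributes only to a contractible (hence acyclic) piece after tensoring — more precisely, the whole of $\varepsilon^{\prime} \otimes_{A} \id_{M}$ fits into a short exact sequence whose kernel is acyclic, exactly as in the argument preceding Theorem \ref{theo:5}.

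For part (1): I would form the mapping cone $C$ of $\varepsilon^{\prime} \otimes_{A} \id_{M}$, which is acyclic by the previous paragraph, and apply $\sHom_{\Ae}(\T, -)$. Since $\sHom_{\Ae}(\T, -)$ is exact on short exact sequences of complexes, $\sHom_{\Ae}(\T, C)$ is the cone of $\sHom_{\Ae}(\T, \varepsilon^{\prime} \otimes_{A} \id_{M})$, so it suffices to show $\sHom_{\Ae}(\T, C)$ is acyclic. Here is where I invoke Lemmas \ref{lem:7} and \ref{lem:8}: because $\T$ is acyclic and dimensionwise projective, any chain map from $\T$ to an acyclic complex is null-homotopic, and likewise any two homotopic chain maps admit a homotopy between the homotopies — this is precisely the statement that $\Hh_*(\sHom_{\Ae}(\T, C)) = [\T, \boldsymbol{\Sigma}^{-*} C] = 0$. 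One has to check that the boundedness hypotheses in Lemmas \ref{lem:7} and \ref{lem:8} can be met; since $C$ is acyclic \emph{everywhere} and $\T$ is projective \emph{everywhere}, one can run the extension arguments of both lemmas simultaneously from any chosen degree $r$, in both directions, to kill any cycle — this is essentially the content of the dual pair of lemmas applied together, as already used in the proof of Lemma \ref{lem:4}.

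For part (2): the argument is formally dual. I would again pass to the cone $C$ of $\varepsilon^{\prime} \otimes_{A} \id_{M}$, which is acyclic, and note that $\T \hotimes_{\Ae} -$ is exact, so $\T \hotimes_{\Ae} C$ is the cone of $\id_{\T} \hotimes_{\Ae} (\varepsilon^{\prime} \otimes_{A} \id_{M})$; it then suffices to prove $\T \hotimes_{\Ae} C$ is acyclic. The finite generation of each $T_i$ is what makes the complete tensor product $\hotimes$ behave well: it ensures $\Hom_{\Ae}(T_i, -)$ commutes with the relevant products and that the natural map $T_i \otimes_{\Ae} (-)$ can be analyzed via the contracting homotopy, exactly as in the proof of Lemma \ref{lem:3}. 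Concretely, using that $C$ is contractible as a complex of right (or left) $A$-modules and that each $T_i$ is finitely generated projective over $\Ae$, one produces a contracting homotopy for $\T \hotimes_{\Ae} C$ by the same $\id \hotimes h$ recipe as in Lemma \ref{lem:3}.

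Finally, for the ``in particular'' clause: when $\T$ is itself a complete resolution of $A$, the left-hand sides $\Hh^*(\sHom_{\Ae}(\T, M)) = \hHh^*(A, M)$ and $\Hh_*(\T \otimes_{\Ae} M) = \hHh_*(A, M)$ are the Tate-Hochschild (co)homology groups by definition, and the displayed isomorphisms are then immediate from parts (1) and (2).

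\textbf{Main obstacle.} The delicate point is not the cone-and-acyclicity formalism but verifying that the boundedness hypotheses of Lemmas \ref{lem:7} and \ref{lem:8} are genuinely satisfied when $\T$ is only required to be acyclic and dimensionwise projective (part (1)) rather than a complete resolution, and dually in part (2) that the complete tensor product $\hotimes_{\Ae}$ — with its infinite products — still admits the explicit contracting homotopy despite $C$ having possibly unbounded homology-free tails in both directions. One must be careful that the contracting homotopy $h$ for $C$ as a one-sided complex exists globally (which it does, since $C$ is contractible, not merely acyclic, as a one-sided complex — this uses the Frobenius hypothesis to make $A$ self-injective), and that $\id \hotimes h$ converges componentwise inside the product $\prod_i T_{n-i} \otimes_{\Ae} C_i$. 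Once these points are pinned down the rest is bookkeeping.
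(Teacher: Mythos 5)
Your argument hinges entirely on the claim that $\varepsilon^{\prime} \otimes_{A} \id_{M} : \T^{\prime} \otimes_{A} M \rightarrow M$ is a quasi-isomorphism, from which you deduce that its cone $C$ is acyclic and hence that $\sHom_{\Ae}(\T, C)$ and $\T \hotimes_{\Ae} C$ are acyclic. That claim is false. Here $\T^{\prime}$ is a \emph{complete} resolution, i.e.\ an exact complex unbounded in both directions; since it is contractible as a complex of one-sided $A$-modules, $\T^{\prime} \otimes_{A} M$ is acyclic, whereas the target $M$ (concentrated in degree $0$) has homology $M \neq 0$. The paper states this explicitly: the augmentation of a complete resolution induces a chain map to the module but is \emph{not} a quasi-isomorphism. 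You have in effect substituted the projective resolution $\T^{\prime}_{\geq 0}$ (for which your key observation is the classical fact recalled before Theorem \ref{theo:5}) for the complete resolution $\T^{\prime}$. Consequently $C$ has homology isomorphic to $M$ (suitably shifted), your asserted one-sided contractibility of $C$ also fails, and the reduction ``$C$ is acyclic, hence $[\T, \boldsymbol{\Sigma}^{-*}C] = 0$'' collapses at its premise.

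The vanishing $[\T, \boldsymbol{\Sigma}^{-*}\Cone(\varepsilon^{\prime} \otimes_{A} \id_{M})] = 0$ is still exactly what must be proved, but it is not a formal consequence of anything; it \emph{is} the content of the proposition, and it uses that $\T$ is an acyclic complex of projectives mapping into a complex built from a complete resolution. The paper proves it by hand: for surjectivity of $[\T, \T^{\prime}\otimes_{A}M] \rightarrow [\T, M]$ one lifts a chain map $u : \T \rightarrow M$ through the degreewise epimorphism $\varepsilon^{\prime}\otimes_{A}\id$ using projectivity of $T_{0}$, checks $(d_{0}^{\T^{\prime}}\otimes_{A}\id)\,\tau_{0}\,d_{1}^{\T} = 0$ via the factorization $d_{0}^{\T^{\prime}} = \eta^{\prime}\varepsilon^{\prime}$, and extends $\tau_{0}$ in both directions by Lemma \ref{lem:4} (i.e.\ Lemmas \ref{lem:7} and \ref{lem:8} combined); injectivity is an analogous lifting of null-homotopies. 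Part (2) then follows by dualizing via $\T \hotimes_{\Ae} C \cong \sHom_{\Ae}(\T^{\vee}, C)$, which is where the finite generation of the $T_{i}$ enters. If you wish to keep a cone formulation, you must prove the vanishing of $[\T, \boldsymbol{\Sigma}^{-*}C]$ for this non-acyclic $C$ directly, which amounts to the same lifting arguments.
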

\begin{proof}
First, we prove $(1)$. Since $\boldsymbol{\Sigma}^{n} \T$ is an acyclic complex of projective $A$-bimodules for all $n \in \Z$, it suffices to show 
that the induced morphism
$[\T, \T^{\prime} \otimes_{A} M]$ $\rightarrow$ $[\T, M]$
is an isomorphism.
Take a chain map $u: \T \rightarrow M$.
Since $\varepsilon^{\prime} \otimes_{A} \id_{M}$ is an epimorphism and $T_{0}$ is projective, 
there exists a morphism $\tau_{0}: T_{0} \rightarrow T^{\prime}_{0} \otimes_{A} M$ such that $(\varepsilon^{\prime} \otimes_{A} \id) \tau_{0} = u$.
Then we get
\begin{align*}
    (d_{0}^{\T^{\prime}} \otimes_{A} \id_{M}) \tau_{0} d^{\T}_{1}
    &=
    (\eta^{\prime} \otimes_{A} \id_{M})(\varepsilon^{\prime} \otimes_{A} \id_{M}) \tau_{0} d^{\T}_{1}\\
    &=
    (\eta^{\prime} \otimes_{A} \id_{M}) u d^{\T}_{1}
    =
    0.
\end{align*}
It follows from Lemma \ref{lem:4} that $\tau_{0}$ extends to a chain map $\tau: \T \rightarrow \T^{\prime} \otimes_{A} M$ such that $\sHom_{\Ae}(\T, \varepsilon^{\prime} \otimes_{A} \id_{M})(\tau) = u$.
On the other hand, assume that $\tau: \T \rightarrow \T^{\prime} \otimes_{A} M$ is a chain map such that 
$(\varepsilon^{\prime} \otimes_{A} \id)\tau: \T \rightarrow M$ is null-homotopic.
Then there exists a morphism $s: T_{-1} \rightarrow M$ such that $(\varepsilon^{\prime} \otimes_{A} \id)\tau_{0} = s d^{\T}_{0}$.
Since $\varepsilon^{\prime} \otimes_{A} \id$ is an epimorphism and $T_{-1}$ is projective, there exists a lifting 
$h_{-1}: T_{-1} \rightarrow T^{\prime}_{0} \otimes_{A} M$ of $s$ such that $(\varepsilon^{\prime} \otimes_{A} \id) h_{-1} = s$, and we have
\begin{align*}
    (d_{0}^{\T^{\prime}} \otimes_{A} \id) h_{-1} d^{\T}_{0}
    &=
    (\eta^{\prime} \otimes_{A} \id)(\varepsilon^{\prime} \otimes_{A} \id) h_{-1} d^{\T}_{0} \\
    &=
    (\eta^{\prime} \otimes_{A} \id) s d^{\T}_{0} \\
    &=
    (d_{0}^{\T^{\prime}} \otimes_{A} \id) \tau_{0}.
\end{align*}
As in the proof of Lemma $\ref{lem:7}$, we can construct $h_{0}: T_{0} \rightarrow T^{\prime}_{1} \otimes_{A} M$ such that 
$\tau_{0} = (d_{1}^{\T^{\prime}} \otimes_{A} \id) h_{0} +h_{-1} (d_{0}^{\T^{\prime}} \otimes_{A} \id)$.
Inductively, we obtain a family $\{ h_{i}\}_{i \geq -1}$ satisfying 
$\tau_{i} =  (d_{i+1}^{\T^{\prime}} \otimes_{A} \id) h_{i} +h_{i-1} (d_{i}^{\T^{\prime}} \otimes_{A} \id)$ for $i \geq 0$.
It follows from Lemma $\ref{lem:8}$ that $\{ h_{i}\}_{i \geq -1}$ extends to a null-homotopy of $\tau$.

In order to prove (2), take the $\Ae$-dual complex $\T^{\vee}$ of $\T$.
It is still an acyclic complex of finitely generated projective $A$-bimodules.
Thus, the second statement follows from the first statement and the fact that there exists an isomorphism 
\begin{align*}
    \T \hotimes_{\Ae} C \cong \sHom_{\Ae}(\T^{\vee}, C) 
\end{align*}
for any chain complex $C$ over $\Ae$.
\end{proof}
%
%

Let $\T$ be a complete resolution of $A$ and $M$ and $N$ $A$-bimodules. 
We consider the following two chain maps: the first is the composition map
\begin{align} \label{eq:9}
    \sHom_{\Ae}(\T, M) \otimes  \sHom_{\Ae}(\T, \T \otimes_{A} N) \rightarrow  \sHom_{\Ae}(\T, M \otimes_{A} N)
\end{align}
defined by 
\[
u \otimes v \mapsto (u \otimes_{A} \id_{N}) \circ v
\]
for any homogeneous element $u \otimes v \in \sHom_{\Ae}(\T, M) \otimes \sHom_{\Ae}(\T, \T \otimes_{A} N)$, and the second is the chain map
\begin{align} \label{eq:11}
    \sHom_{\Ae}(\T, M) \otimes  (\T \hotimes_{\Ae} (\T \otimes_{A} N)) \rightarrow   \T \otimes_{\Ae} (M \otimes_{A} N)
\end{align}
given by 
\[
u \otimes (x \otimes_{\Ae} (x^{\prime} \otimes_{A} n)) \mapsto (-1)^{|u||x|} x \otimes_{\Ae} (u(x^{\prime}) \otimes_{A} n)
\]
for any homogeneous element 
$u \otimes (x \otimes_{\Ae} (x^{\prime} \otimes_{A} n)) \in 
\sHom_{\Ae}(\T, M) \otimes  (\T \hotimes_{\Ae} (\T \otimes_{A}$ $N))$.
By Proposition $\ref{prop:3}$, these chain maps induce a well-defined operators, called \textit{composition products},
\begin{align} 
  \label{eq:7}  &\hHh^{r}(A, M) \otimes  \hHh^{s}(A, N) \rightarrow \hHh^{r+s}(A, M \otimes_{A} N), \\[3pt]
 \label{eq:12}   &\hHh^{r}(A, M) \otimes  \hHh_{s}(A, N) \rightarrow \hHh_{s-r}(A, M \otimes_{A} N).
\end{align}

%
%
\begin{theo} \label{theo:3}
The following statements hold.
\begin{enumerate}
\item[(1)]
    The composition product $(\ref{eq:7})$ agrees with the cup product $(\ref{eq:8})$.
\item[(2)]
    The composition product $(\ref{eq:12})$ agrees with the cap product $(\ref{eq:10})$.
\end{enumerate}
\end{theo}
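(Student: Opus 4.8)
The plan is to mimic the classical Hochschild-theoretic argument (Theorem \ref{theo:5} and its cap-product analogue), replacing the ordinary tensor product by the complete tensor product and the projective resolution by a complete resolution, and feeding in Proposition \ref{prop:3} wherever one previously used the isomorphisms $\Hh^{*}(A,N) \cong \Hh^{*}(\sHom_{\Ae}(\P,\P\otimes_{A}N))$ and $(\ref{eq:14})$. For part (1), fix a complete resolution $\T$ of $A$ with augmentation $\varepsilon$ and a diagonal approximation $\widehat{\Delta}:\T\to\T\hotimes_{A}\T$. The key observation is that the isomorphism $\hHh^{s}(A,N)\cong\Hh^{s}(\sHom_{\Ae}(\T,\T\otimes_{A}N))$ of Proposition \ref{prop:3}(1) is realized on the chain level by $\sHom_{\Ae}(\T,\varepsilon\otimes_{A}\id_{N})$, so a class $\xi\in\hHh^{s}(A,N)$ with cocycle representative $v\in\sHom_{\Ae}(\T,N)^{s}$ can be lifted, via Lemma \ref{lem:4} and the argument in the proof of Proposition \ref{prop:3}(1), to a chain map $\widetilde{v}:\T\to\T\otimes_{A}N$ with $(\varepsilon\otimes_{A}\id_{N})\widetilde{v}=v$. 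One may in fact take $\widetilde{v} = (\widehat{\Delta}_{\geq 0}$-type lift$)$; more precisely, the composite $(\id_{\T}\hotimes_{A}v)\widehat{\Delta}:\T\to\T\hotimes_{A}N\cong\T\otimes_{A}N$ is such a lift, since applying $\varepsilon\hotimes_{A}\id_{N}$ and using $(\varepsilon\hotimes_{A}\varepsilon)\widehat{\Delta}=\varepsilon$ returns $v$ up to the identification. Then for $u\in\sHom_{\Ae}(\T,M)^{r}$ a cocycle representing $\gamma$,
\[
(u\otimes_{A}\id_{N})\circ\widetilde{v} = (u\otimes_{A}\id_{N})\circ(\id_{\T}\hotimes_{A}v)\widehat{\Delta} = (u\hotimes_{A}v)\widehat{\Delta} = u\smile v,
\]
where the middle equality is a bookkeeping identity for the complete tensor product of graded maps. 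This shows the composition product $(\ref{eq:7})$ equals the cup product $(\ref{eq:8})$ on representatives, hence on (co)homology.

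For part (2) the strategy is identical in spirit. Using Proposition \ref{prop:3}(2), the isomorphism $\hHh_{s}(A,N)\cong\Hh_{s}(\T\hotimes_{\Ae}(\T\otimes_{A}N))$ is induced by $\id_{\T}\hotimes_{\Ae}(\varepsilon\otimes_{A}\id_{N})$. Given a homology class $z\in\hHh_{s}(A,N)$ represented by a cycle $x\otimes_{\Ae}n\in(\T\otimes_{\Ae}N)_{s}$, one lifts it along $\id_{\T}\hotimes_{\Ae}(\varepsilon\otimes_{A}\id_{N})$ to a cycle $\widetilde{z}\in(\T\hotimes_{\Ae}(\T\otimes_{A}N))_{s}$; the natural choice produced by $\widehat{\Delta}$ is $\widetilde{z} := (\widehat{\Delta}\otimes_{\Ae}\id_{N})(x\otimes_{\Ae}n)$, which by $(\varepsilon\hotimes_{A}\varepsilon)\widehat{\Delta}=\varepsilon$ maps back to $x\otimes_{\Ae}n$ under the relevant augmentation. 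Then applying the chain map $(\ref{eq:11})$ to $u\otimes\widetilde{z}$ gives precisely $\gamma\frown z$ as defined by the composite $\gamma\circ(\widehat{\Delta}\otimes_{\Ae}\id)$, because the two chain maps $(\ref{eq:11})$ and $\gamma:\sHom_{\Ae}(\T,M)\otimes((\T\hotimes_{A}\T)\otimes_{\Ae}N)\to\T\otimes_{\Ae}(M\otimes_{A}N)$ differ only by the associativity reshuffle $(\T\hotimes_{A}\T)\otimes_{\Ae}N \cong \T\hotimes_{\Ae}(\T\otimes_{A}N)$, and the sign conventions match. Tracking signs carefully (the $(-1)^{|u||x|}$ in both $\gamma$ and $(\ref{eq:11})$) closes the argument.

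The main obstacle I anticipate is not conceptual but bookkeeping: verifying that the lift $\widetilde{v}$ (resp.\ $\widetilde{z}$) built from $\widehat{\Delta}$ is genuinely a chain map landing in $\T\otimes_{A}N$ rather than merely in $\T\hotimes_{A}N$, and that this is compatible with the (co)homology-level identifications of Proposition \ref{prop:3}. Because $\T\hotimes_{A}\T$ has components that are infinite products while $\T$ itself has finitely generated components, one must check that the relevant composites only involve finite sums in each degree; this is where finite generation of the $T_{i}$ is used. A second routine-but-delicate point is that one must invoke the \emph{uniqueness up to homotopy} of the lift (Lemma \ref{lem:4}), so that any two choices of $\widetilde{v}$ give the same class — this is what lets us use the particular $\widehat{\Delta}$-built lift in place of an abstract one. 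Once these are in hand, matching signs between $(u\hotimes_{A}v)\widehat{\Delta}$, $(u\otimes_{A}\id)\circ\widetilde{v}$, and the cap-product composite is a direct computation; I would present (1) in full and remark that (2) follows by the same method, since the excerpt already signals that the proof of the Hochschild cap-product theorem is ``similar to the proof of Theorem \ref{theo:3}(2)''.
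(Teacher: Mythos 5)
Your proposal is correct and follows essentially the same route as the paper: the paper packages your ``$\widehat{\Delta}$-built lift'' as an explicit quasi-inverse $\alpha'(v) = (\id_{\T}\hotimes_{A}v)\widehat{\Delta}$ of $\sHom_{\Ae}(\T,\varepsilon\otimes_{A}\id)$ (and $\beta' = (\widehat{\Delta}\otimes_{\Ae}\id)$ for homology), then computes $(u\otimes_{A}\id)\circ\alpha'(v) = (u\hotimes_{A}v)\widehat{\Delta}$ exactly as you do. The one point to state more carefully is that your lift is not strict: $(\varepsilon\hotimes_{A}\id_{N})(\id_{\T}\hotimes_{A}v)\widehat{\Delta} = \varepsilon\smile v$, which equals $v$ only up to homotopy (via $[\varepsilon]=1\in\hHh^{0}(A,A)$, resp.\ via $\beta\beta'=\sigma\otimes_{\Ae}\id$ with $\sigma\sim\id_{\T}$) rather than by the identity $(\varepsilon\hotimes_{A}\varepsilon)\widehat{\Delta}=\varepsilon$ alone — but this is exactly the uniqueness-up-to-homotopy caveat you already flag, so the argument closes.
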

\begin{proof}
First of all, we show $(1)$.
The composition product $(\ref{eq:7})$ is induced by the chain map $(\ref{eq:9})$ via the quasi-isomorphism $\alpha := \sHom_{\Ae}(\T, \varepsilon^{\prime} \otimes_{A} \id_{M})$.
We now construct a quasi-inverse of $\alpha$, i.e., a chain map 
\begin{align*}
\alpha^{\prime}: \sHom_{\Ae}(\T, N) \rightarrow \sHom_{\Ae}(\T, \T^{\prime} \otimes_{A} N)
\end{align*}
inducing the inverse of the isomorphism $\Hh(\alpha)$.
Let $\alpha^{\prime}$ be the chain map
\begin{align*}
\sHom_{\Ae}(\T, N) \rightarrow \sHom_{\Ae}(\T, \T^{\prime} \otimes_{A} N)
\end{align*}
determined by $u \mapsto (\id_{\T} \hotimes_{A} u) \widehat{\Delta}$,
where $\widehat{\Delta}: \T \rightarrow \T \hotimes_{A} \T$ is a diagonal approximation.
Then the composition $\alpha \alpha^{\prime}: \sHom_{\Ae}(\T, N) \rightarrow \sHom_{\Ae}(\T, N)$ is given by
\[
\alpha \alpha^{\prime} 
    = (\varepsilon \otimes_{A} N)(\id_{\T} \hotimes_{A} u) \widehat{\Delta} 
    = (\varepsilon \hotimes_{A} u) \widehat{\Delta} 
    = \varepsilon \smile u.
\]
Since $[\varepsilon] = 1 \in \hHh^{0}(A, A)$, we see that
$\alpha^{\prime}$ is a quasi-inverse of $\alpha$. 
We are now able to compare the composition product $(\ref{eq:7})$ with the cup product $(\ref{eq:8})$ as follows:
for any $u \in \sHom_{\Ae}(\T, M)$ and $v \in \sHom_{\Ae}(\T, N)$,
\begin{align*}
    u \otimes v 
    &\mapsto 
    u \otimes \alpha^{\prime}(v)\\
    &\mapsto 
    (u \hotimes_{A} \id)( \id \hotimes_{A} v) \widehat{\Delta} \\
    &=
    (u \hotimes_{A} v) \widehat{\Delta} 
    =
    u \smile v.
\end{align*}

Secondly, we prove (2). In view of the proof of $(1)$, we construct a weak-inverse $\beta^{\prime}$ of 
$\beta:= \id_{\T} \hotimes_{\Ae} (\varepsilon \otimes_{A} \id_{N}):
\T \hotimes_{\Ae} (\T \otimes_{A} N) \rightarrow  \T \otimes_{\Ae} N$.
Let $\beta^{\prime}$ be the composition
\begin{align*}
    \T \otimes_{\Ae} N \xrightarrow{ \widehat{\Delta} \otimes \id} (\T \hotimes_{A} \T) \otimes_{\Ae} N \cong  \T \hotimes_{\Ae} (\T \otimes_{\Ae} N ).
\end{align*}
Note that there exists a commutative square of chain complexes
\[\xymatrix@C=70pt{
(\T \hotimes_{A} \T) \otimes_{\Ae} N \ar[r]^{(\id \hotimes_{A} \varepsilon) \otimes_{\Ae} \id} \ar[d]_{\cong} &
(\T \otimes_{A} A) \otimes_{\Ae} N  \ar[d]^{\cong} \\
\T \hotimes_{A} (\T \otimes_{\Ae} N) \ar[r]^{\id \hotimes_{A} (\varepsilon \otimes_{\Ae} \id)}  &
\T \otimes_{A} (A \otimes_{\Ae} N )
}\]
Thus we see that the morphism 
\begin{align*}
    \beta \beta^{\prime}: \T \otimes_{\Ae} N \rightarrow \T \otimes_{\Ae} N  
\end{align*}
is of the form $ \sigma \otimes_{\Ae} \id_{N}$, where $\sigma:\T \rightarrow \T$ is an augmentation-preserving chain map and hence is homotopic to $\id_{\T}$.
It is easy to check that the two chain map $(\ref{eq:12})$ and $(\ref{eq:10})$ coincide via the weak-inverse $\beta^{\prime}$ on the chain level.
This completes the proof. 
\end{proof}
%
%

%
%
\begin{rem} \label{rem:2}
{\rm
In the proof of Theorem $\ref{theo:3}$, we can directly construct  the inverse of $\Hh(\alpha)$ when $N=A$ in the following way: 
let $n \in \Z$ and $g \in \sHom_{\Ae}(\T, A)^{n}$ a cocycle. 
Then there uniquely (up to homotopy) exists a cocycle $\overline{g} \in \sHom_{\Ae}(\T, \T)^{n}$. Indeed, since $g$ is a cocycle, there uniquely exists a morphism 
$g^{\prime}: \Coker d^{\boldsymbol{\Sigma}^{-n}\T}_{1} \rightarrow T_{-1}$ 
making the center square in the following diagram commute:
\[\xymatrix@C=31pt@R=20pt{
&
&
\Coker d^{\boldsymbol{\Sigma}^{-n}\T}_{1} \ar[rd] \ar[rdd]_(.6){g^{\prime}}&
&
 \\
\cdots \rightarrow 
T_{n+1} \ar[r]^-{d^{\boldsymbol{\Sigma}^{-n}\T}_{1}} & 
T_{n} \ar[rr]^-{d^{\boldsymbol{\Sigma}^{-n}\T}_{0}}  \ar@{->>}[ru] \ar[rdd]^(.35){g}& 
&
T_{n-1} \ar[r]^-{d^{\boldsymbol{\Sigma}^{-n}\T}_{-1}} & 
T_{n-2}  \rightarrow \cdots  \\ 
\cdots \rightarrow 
T_{1} \ar[r]^-{d_{1}}&
T_{0} \ar[rr]^-{d_{0}} \ar@{->>}[rd]&
&
T_{-1} \ar[r]^-{d_{-1}}&
T_{-2} \rightarrow \cdots\\
&
&
A \ar[ru]&
&
}\]
By Lemmas $\ref{lem:7}$ and $\ref{lem:8}$, there uniquely (up to homotopy) exists 
a lifting chain map $\overline{g}: \boldsymbol{\Sigma}^{-n}\T \rightarrow \T$ of $g$. 
Moreover, we can take $d^{\sHom_{\Ae}(\T, \T)}_{n}(\overline{g})$ 
as a lifting chain map of a coboundary $d^{\sHom_{\Ae}(\T, A)}_{n}(g)$.
Thus, we obtain the morphism
\begin{align*}
    \beta: \hHh^{n}(A, A) \rightarrow \Hh^{n}(\sHom_{\Ae}(\T, \T)) 
\end{align*}
given by $[g] \mapsto [\overline{g}]$.
This map is the inverse of $\Hh(\alpha)$. Indeed, we have 
\begin{align*}
    \Hh(\alpha) \beta([g]) = \Hh(\alpha) ([\overline{g}]) = [\alpha(\overline{g})] =[g]
\end{align*}
for all $[g] \in \hHh^{n}(A, A)$. 
Thus, we get $\beta = \Hh(\alpha^{\prime})$.
In conclusion, we can compute the cup product $\smile$ via chain maps when $N =A$.
}\end{rem}
%
%

As a corollary of Theorem \ref{theo:3}, we have the following property
with respect to the cup product and the cap product.

%
%
\begin{cor} \label{cor:3}
For all $\alpha \in \hHh^{*}(A, L), \beta \in \hHh^{*}(A, M)$ and $\omega \in \hHh_{*}(A, N)$, we have 
\[
(\alpha \smile \beta) \frown \omega = \alpha \frown (\beta \frown \omega).
\]
\end{cor}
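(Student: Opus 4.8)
The plan is to reduce the identity to the coassociativity, up to homotopy, of a diagonal approximation. Fix a complete resolution $\T\xrightarrow{\varepsilon}A$ together with a diagonal approximation $\widehat\Delta\colon\T\to\T\hotimes_{A}\T$ (Theorem \ref{theo:1}), and represent $\alpha$, $\beta$, $\omega$ by a cocycle $a\in\sHom_{\Ae}(\T,L)$, a cocycle $b\in\sHom_{\Ae}(\T,M)$ and a cycle $z=x\otimes_{\Ae}n\in\T\otimes_{\Ae}N$, respectively; the asserted equality is to be read in $\hHh_{*}(A,L\otimes_{A}M\otimes_{A}N)$ via the canonical associativity isomorphism for $\otimes_{A}$.

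First I would unwind both sides on the chain level straight from the definition $(\ref{eq:6})$ of $\smile$ and from the definition of $\frown$. Writing $\widehat\Delta(x)=\sum x_{(1)}\hotimes_{A}x_{(2)}$ in Sweedler-type notation, a short computation identifies $(\alpha\smile\beta)\frown\omega$ with the class of the cycle obtained by applying $(\id_{\T}\hotimes_{A}\widehat\Delta)\widehat\Delta$ to $x$ and then applying the chain map
\[
(x_{(1)}\hotimes_{A}x_{(2)}\hotimes_{A}x_{(3)})\otimes_{\Ae}n\ \longmapsto\ \pm\,x_{(1)}\otimes_{\Ae}\bigl(a(x_{(2)})\otimes_{A}b(x_{(3)})\otimes_{A}n\bigr),
\]
the sign being the one prescribed by $\gamma$ together with the Koszul rule inside $a\hotimes_{A}b$; and it identifies $\alpha\frown(\beta\frown\omega)$ with the class of the cycle obtained by feeding $(\widehat\Delta\hotimes_{A}\id_{\T})\widehat\Delta$ applied to $x$ into \emph{the very same} chain map (here one uses that $\widehat\Delta$ is degree-preserving, so that exponents of the form $|b|\,|x_{(1)}|$ split correctly, to reconcile the two ways in which the sign arises). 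Thus the two sides differ only in which of the two standard ``triple comultiplications'' of $\widehat\Delta$ is used.

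Second I would verify that $\widehat\Delta$ is coassociative up to homotopy, i.e.\ that $(\widehat\Delta\hotimes_{A}\id_{\T})\widehat\Delta$ and $(\id_{\T}\hotimes_{A}\widehat\Delta)\widehat\Delta$ are homotopic as augmentation-preserving chain maps from $\T$ to the triple complete tensor product $\T\hotimes_{A}\T\hotimes_{A}\T$ (the two bracketings agreeing since each $T_{i}$ is finitely generated). As in Lemma \ref{lem:3}, this complex is acyclic — a contracting homotopy is $\id_{\T\hotimes_{A}\T}\hotimes_{A}h$ for a left $A$-module contracting homotopy $h$ of the last factor — and dimensionwise projective over $\Ae$, so Lemma \ref{lem:4} applies and produces the desired homotopy. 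Since the displayed map is a chain map (this uses that $a$ and $b$ are cocycles), homotopic triple comultiplications are carried to homotopic cycles, and therefore the two sides of the identity represent the same class. One may also route this step through Theorem \ref{theo:3}: replacing $\omega$ by its lift in $\T\hotimes_{\Ae}(\T\otimes_{A}N)$ via the weak-inverse $\beta'$ from that proof absorbs one copy of $\widehat\Delta$ and turns both cap products into plain ``insert into the middle factor'' operations, whereupon the identity becomes the associativity of composition of graded maps.

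The step I expect to be the real obstacle is the sign bookkeeping of the chain-level unwinding: one must check that, after tracking the Koszul signs hidden in $u\hotimes_{A}v$, in the map $\gamma$, and in the two nested occurrences of $\widehat\Delta$, the two multilinear chain maps on the two sides are \emph{literally} the same — not just equal up to an overall sign — since otherwise the homotopy argument yields nothing. This is routine but requires care; I would carry it out once, with the sign conventions of Section \ref{sec:1} fixed, before invoking coassociativity.
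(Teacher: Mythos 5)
Your argument is correct, but it is not the route the paper takes. The paper's proof goes through Theorem \ref{theo:3}: it replaces $\beta$ by a lift $v \in \sHom_{\Ae}(\T, \T \otimes_{A} M)$ and $\omega$ by a lift $z = x \otimes_{\Ae}(y \otimes_{A} n) \in \T \hotimes_{\Ae}(\T \otimes_{A} N)$, and then both sides of the identity become \emph{literally} the same element $(-1)^{(|u|+|v|)|x|}\,x \otimes_{\Ae}(u \otimes_{A}\id)v(y)\otimes_{A}n$ on the chain level — the lifts already absorb every occurrence of $\widehat{\Delta}$, so no coassociativity and no homotopy argument is needed beyond what Theorem \ref{theo:3} supplies. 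Your primary route instead works with the definitions $(\ref{eq:6})$ and $(\ref{eq:10})$ directly and reduces the identity to coassociativity of $\widehat{\Delta}$ up to homotopy; this is the classical Brown-style argument, and it does go through: your sign check is right (both sides are the single map $y_{1}\hotimes y_{2}\hotimes y_{3}\otimes_{\Ae}n \mapsto (-1)^{|a||y_{1}|+|b|(|y_{1}|+|y_{2}|)}y_{1}\otimes_{\Ae}(a(y_{2})\otimes_{A}b(y_{3})\otimes_{A}n)$ precomposed with the two triple comultiplications), and the coassociativity-up-to-homotopy follows from exactly the acyclicity/dimensionwise-projectivity argument of Lemmas \ref{lem:3} and \ref{lem:4} applied to $\T\hotimes_{A}\T\hotimes_{A}\T$, just as in the uniqueness part of Theorem \ref{theo:1} (your parenthetical that the two bracketings agree because each $T_{i}$ is finite dimensional is the one point that must be said, and you said it). The trade-off: the paper's proof is three lines once Theorem \ref{theo:3} is in place, while yours is self-contained at the level of diagonal approximations, costs an extra comparison-theorem argument, but makes the underlying mechanism (homotopy coassociativity of $\widehat{\Delta}$) explicit. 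Your closing remark about routing through Theorem \ref{theo:3} is in fact precisely the paper's proof.
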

\begin{proof}
In view of Theorem $\ref{theo:3}$, it suffices to show the statement for the elements represented by
$u \in \sHom_{\Ae}(\T, L)$, $v \in \sHom_{\Ae}(\T, \T \otimes_{A} M)$ and $ z= x \otimes_{\Ae} (y \otimes_{A} n )\in \T \hotimes_{\Ae} (\T \otimes_{A} N)$.
Then we have, on the chain level,
\begin{align*}
(u \smile v) \frown z 
    &=
    (-1)^{(|u|+|v|)|x|} x \otimes_{\Ae} (u \smile v)(y) \otimes_{A} n \\
    &=
    (-1)^{(|u|+|v|)|x|} x \otimes_{\Ae} (u \otimes_{A} \id) v(y) \otimes_{A} n, \\
u \frown (v \frown z)
    &= 
    u \frown ((-1)^{|v||x|} x \otimes_{\Ae} v(y) \otimes_{A} n) \\
    &= 
    (-1)^{|u||x|+|v||x|} x \otimes_{\Ae} (u \otimes_{A} \id) v(y) \otimes_{A} n. 
\end{align*}
This finishes the proof.
\end{proof}
%
%

%
%
\subsection{Comparison with singular Hochschild cohomology ring}
Our aim in this subsection is to prove our main theorem.
For this, we first recall the result of Eu and Schedler.
Let $f \in \underline{\Hom}_{\Ae}(\Omega_{\Ae}^{i}A, A)$ and 
$g \in \underline{\Hom}_{\Ae}(\Omega_{\Ae}^{j}A, A)$ with $i, j \in \Z$. 
We define a morphism 
$f \cup g$ by
\begin{align*}
    f \cup g := f  \Omega_{\Ae}^{i}(g)  
    \in \underline{\Hom}_{\Ae}(\Omega_{\Ae}^{i+j}A, A),
\end{align*}
where $ \Omega_{\Ae}^{i}(g) \in \underline{\Hom}_{\Ae}(\Omega_{\Ae}^{i+j}A, \Omega_{\Ae}^{i}A)$ is induced by an autoequivalence 
$\Omega_{\Ae}: \Ae\textrm{-}\underline{\mathrm{mod}} \rightarrow \Ae\textrm{-}\underline{\mathrm{mod}}$.
On the other hand, it follows from \cite[Proposition 2.1.8]{EuSched09} that there exists an isomorphism
\begin{align} \label{eq:13}
    \Omega_{\Ae}^{i}L \underline{\otimes}_{\Ae} N \cong L \underline{\otimes}_{\Ae}  \Omega_{\Ae}^{i}N
\end{align}
for any finitely generated $A$-bimodules $L$ and $N$ and $i \in \Z$. Using this isomorphism, we define 
\begin{align*}
    \cap: 
    \underline{\Hom}_{\Ae}(\Omega_{\Ae}^{i}A, A) \otimes (\Omega_{\Ae}^{j}A \underline{\otimes}_{\Ae} M)
    \rightarrow
    \Omega_{\Ae}^{j-i}A \underline{\otimes}_{\Ae} M
\end{align*}
to be the morphism making the following square commute:
\[\xymatrix{
\underline{\Hom}_{\Ae}(\Omega_{\Ae}^{i}A, A) \otimes (\Omega_{\Ae}^{j}A \underline{\otimes}_{\Ae} M) \ar[r]^-{\cap} \ar[d]_{\cong} &
\Omega_{\Ae}^{j-i}A \underline{\otimes}_{\Ae} M \\
\underline{\Hom}_{\Ae}(\Omega_{\Ae}^{i}A, A) \otimes (\Omega_{\Ae}^{i}A \underline{\otimes}_{\Ae} \Omega_{\Ae}^{j-i}M) \ar[r]^-{\phi} &
A \underline{\otimes}_{\Ae} \Omega_{\Ae}^{j-i}M \ar[u]_{\cong}
}\]
where the morphism $\phi$ is given by 
\[
[f] \otimes (a \otimes_{\Ae} m) \mapsto f(a) \otimes_{\Ae} m.
\]

%
%
\begin{theo}[{\cite[Theorem 2.1.15]{EuSched09}}] \label{theo:6}
We have the following statements.
\begin{enumerate}
\item 
    The graded vector space 
    \[
    \underline{\Hom}_{\Ae}(\Omega_{\Ae}^{\bullet}A, A) := \bigoplus_{i \in \Z} \underline{\Hom}_{\Ae}(\Omega_{\Ae}^{i}A, A)
    \] 
    equipped with $\cup$ forms a graded commutative algebra, which extends the cup product on 
    $\Hh^{\geq 1}(A, A)$.
    \item  
        The morphism $\cap$ extends the cap product between $\Hh^{*}(A, A)$ and $\Hh_{*}(A, M)$
        for a finitely generated $A$-bimodule $M$ and satisfies the relation
        \[
        (f \cup g) \cap z = f \cap (g \cap z)
        \]
        for any $z \in \Omega_{\Ae}^{*}A \underline{\otimes}_{\Ae} M, f \in \underline{\Hom}_{\Ae}(\Omega_{\Ae}^{*}A, A)$ and 
        $g \in \underline{\Hom}_{\Ae}(\Omega_{\Ae}^{*}A, A)$.
\end{enumerate}
\end{theo}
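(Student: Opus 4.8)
Theorem \ref{theo:6} is quoted from \cite{EuSched09}, so I only indicate the line of argument one would follow, exploiting the machinery already set up above. The decisive step is to identify the Eu--Schedler operators $\cup$ and $\cap$ with the cup product $\smile$ of $(\ref{eq:8})$ and the cap product $\frown$ of $(\ref{eq:10})$. Fix a complete resolution $\T$ of $A$, a diagonal approximation $\widehat{\Delta}$, and the isomorphisms $\Phi_{i}$, $\Psi_{i}$ from the proof of Proposition \ref{prop:9} (taken with $M = N = A$ in the first case, and with a general bimodule $M$ in the second), so that $\hHh^{i}(A, A) \cong \underline{\Hom}_{\Ae}(\Omega_{\Ae}^{i}A, A)$ and $\hHh_{i}(A, M) \cong \Omega_{\Ae}^{i}A \underline{\otimes}_{\Ae} M$ as in Remark \ref{rem:1}. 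By Theorem \ref{theo:3} the product $\smile$ agrees with the composition product $(\ref{eq:7})$, which sends $u \otimes v$ to $(u \otimes_{A}\id)\circ v$ through the quasi-isomorphism $\sHom_{\Ae}(\T, \T \otimes_{A} A) \to \sHom_{\Ae}(\T, A)$; tracing its quasi-inverse, which by Remark \ref{rem:2} is realised by the syzygy operators $g \mapsto \overline{g}$, one finds that under the isomorphisms $\Phi_{\bullet}$ the product $\smile$ takes the form $f \otimes g \mapsto f \circ \Omega_{\Ae}^{i}(g) = f \cup g$; the same computation, run with $(\ref{eq:12})$ in place of $(\ref{eq:7})$ and using the isomorphism $(\ref{eq:13})$, identifies $\frown$ with $\cap$. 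This identification step is where I expect the real work to lie: one must verify that the connecting and lifting maps produced by Lemmas \ref{lem:7}, \ref{lem:8} and Remark \ref{rem:2} agree with the autoequivalence $\Omega_{\Ae}$ on the nose, and keep careful track of the Koszul signs introduced by $\hotimes_{A}$.

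Granting that identification, all remaining assertions fall out of results already proved. Associativity and graded commutativity of $\cup$, together with the fact that $[\id_{A}] = [\varepsilon]$ is a two-sided unit, are immediate from Theorem \ref{theo:9}, since a diagonal approximation is unique up to homotopy by Theorem \ref{theo:1} and hence the cup product does not depend on it; likewise the uniqueness statement of Theorem \ref{theo:4} pins down $\cap$. The relation $(f \cup g)\cap z = f \cap (g \cap z)$ is then exactly Corollary \ref{cor:3}, transported along the identification. Finally, to see that $\cup$ extends the Hochschild cup product on $\Hh^{\geq 1}(A, A)$, recall from the remark following Theorem \ref{theo:1} that the non-negative truncation $\widehat{\Delta}_{\geq 0}$ is a diagonal approximation for the projective resolution $\T_{\geq 0}$ of $A$; for $u, v$ of strictly positive degree only the component $\widehat{\Delta}_{\geq 0}$ enters the formula $u \smile v = (u \hotimes_{A} v)\widehat{\Delta}$, so $\smile$ restricts in positive degrees to the Hochschild cup product computed from $\widehat{\Delta}_{\geq 0}$, and this is independent of the choice by Theorem \ref{theo:5}. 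The same reasoning, using the composition-product description of the Hochschild cap product recalled after $(\ref{eq:14})$, shows that $\cap = \frown$ extends the Hochschild cap product between $\Hh^{*}(A, A)$ and $\Hh_{*}(A, M)$.

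An alternative, more intrinsic account of graded commutativity of $\cup$ — the one point that does not come for free from the $\Omega_{\Ae}$-formalism alone — is to observe that for the self-injective algebra $\Ae$ the projectively stable category is Buchweitz's singularity category \cite{Buch86}, which is tensor triangulated under $\otimes_{A}^{\mathbf{L}}$ with unit $A$ (the perfect complexes form a tensor ideal because $\Ae \otimes_{A} \Ae$ is a finitely generated projective $\Ae$-module, the very fact used in Lemma \ref{lem:3}), and under $\Phi_{\bullet}$ the product $\cup$ becomes the composition product on $\bigoplus_{i \in \Z}\Hom_{\mathcal{D}_{{\rm sg}}(\Ae)}(A, \boldsymbol{\Sigma}^{i}A)$; an Eckmann--Hilton argument against the $\otimes_{A}^{\mathbf{L}}$-product then forces this composition product to be graded commutative with the expected Koszul sign. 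Either way, the crux of the proof is the bookkeeping in the first paragraph, and everything else is formal once $\cup = \smile$ and $\cap = \frown$ have been established.
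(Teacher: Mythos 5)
The first thing to note is that the paper does not prove Theorem \ref{theo:6} at all: it is imported verbatim from Eu--Schedler \cite[Theorem 2.1.15]{EuSched09} and used as an input, and the bridge between the Eu--Schedler operators $\cup$, $\cap$ and the operators $\smile$, $\frown$ of Section \ref{sec:3} is established only afterwards, in Lemma \ref{lem:12}. Your proposal inverts this logic: you want to \emph{derive} Theorem \ref{theo:6} from the Tate--Hochschild machinery by first identifying $\cup$ with $\smile$ and $\cap$ with $\frown$ under the isomorphisms $\Phi_{\bullet}$, $\Psi_{\bullet}$ of Proposition \ref{prop:9}, and then transporting associativity and graded commutativity from Sanada's Theorem \ref{theo:9} and the relation $(\alpha\smile\beta)\frown\omega=\alpha\frown(\beta\frown\omega)$ from Corollary \ref{cor:3}. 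As a plan this is coherent and non-circular (Theorems \ref{theo:3} and \ref{theo:9}, Corollary \ref{cor:3} and Proposition \ref{prop:9} are all proved independently of Theorem \ref{theo:6}), and the transport steps and the "extends the Hochschild cup product in positive degrees" argument via $\widehat{\Delta}_{\geq 0}$ are genuinely formal, as you say.

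The problem is that the one step carrying all the content --- the verification that under $\Phi_{\bullet}$ the composition product $(\ref{eq:7})$ becomes $f\otimes g\mapsto f\circ\Omega_{\Ae}^{i}(g)$, and that under $\Phi_{i}\otimes\Psi_{j}$ the chain map $(\ref{eq:12})$ becomes $\cap$ --- is exactly the step you defer ("this is where I expect the real work to lie"). That verification is the entire content of the paper's Lemma \ref{lem:12}, and it is not a one-line bookkeeping exercise: for the cohomology side one must check that a lifting chain map $\overline{g}$ of $g\pi_{j}$ in the sense of Remark \ref{rem:2} restricts on kernels to $\Omega_{\Ae}^{\bullet}(g)$ up to the sign conventions built into $\iota_{l+j+1}$; for the homology side the paper must first reduce to degree $0$ by a dimension-shifting argument using property $({\rm QII}_{2})$ and the vanishing $\hHh_{r}(A,T_{s}^{M})=0$, and then exploit the homotopy $\varepsilon\smile f\pi_{i}\sim f\pi_{i}$ to match $\phi$ with $\frown$. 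Your phrase "the same computation, run with $(\ref{eq:12})$ in place of $(\ref{eq:7})$" underestimates this: the two halves of Lemma \ref{lem:12} require different arguments. So, as written, your proposal is an outline of a legitimate alternative proof, but with its only substantive step missing; to make it a proof you would need to supply the diagram chases of Lemma \ref{lem:12} (taking care, if you then want to keep the paper's Lemma \ref{lem:12} as stated, that the graded commutativity of the target in its part (1) is now being \emph{deduced} from Theorem \ref{theo:9} rather than quoted from \cite{EuSched09}). The Eckmann--Hilton aside is plausible but equally unsubstantiated, and is not needed once Theorem \ref{theo:9} is in hand.
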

%
%

It follows from Proposition $\ref{prop:9}$ that there exist isomorphisms
\begin{align*}
    \hHh^{*}(A, M) \cong \underline{\Hom}_{\Ae}(\Omega_{\Ae}^{*}A, M) \ \  \mbox{and}\ \  
    \hHh_{*}(A, M) \cong \Omega_{\Ae}^{*}A \underline{\otimes}_{\Ae} M
\end{align*}
for a finitely generated $A$-bimodule $M$.
We will prove that $\smile$ and $\frown$ are equivalent to $\cup$ and $\cap$, respectively, via the isomorphisms above.
%
%
\begin{lemma} \label{lem:12}
We have the following statements.
\begin{enumerate}
\item 
    There exists an isomorphism
    \[
    \hHh^{\bullet}(A, A) \cong \underline{\Hom}_{\Ae}(\Omega_{\Ae}^{\bullet}A, A) 
    \] 
    of graded commutative algebras.
\item  
    For any $i, j \in \Z$, there exists a commutative diagram
    \[\xymatrix{
    \underline{\Hom}_{\Ae}(\Omega_{\Ae}^{i}A, A) \otimes (\Omega_{\Ae}^{j}A \underline{\otimes}_{\Ae} M) \ar[r]^-{\cap} \ar[d] &
    \Omega_{\Ae}^{j-i}A \underline{\otimes}_{\Ae} M \ar[d] \\
    \hHh^{i}(A, A) \otimes \hHh_{j}(A, M) \ar[r]^-{\frown} &
    \hHh_{j-i}(A, M)
    }\]
    where $M$ is a finitely generated $A$-bimodule and the vertical morphisms are given by the isomorphisms  as in Proposition \ref{prop:9}.
\end{enumerate}
\end{lemma}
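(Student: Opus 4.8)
The plan is to pull both the multiplication $\cup$ and the pairing $\cap$ of Eu and Schedler down to the chain level by means of the composition-product descriptions obtained in Theorem~\ref{theo:3}, and then to recognise the loop operators $\Omega_{\Ae}^{i}(-)$ as nothing but the effect, on cycle modules, of the chain lifts produced in Remark~\ref{rem:2}. Throughout I fix a minimal complete resolution $\T$ of $A$ together with a diagonal approximation $\widehat{\Delta}$, and write $d_{n}^{\T}=\iota_{n}\pi_{n}$ for the canonical factorisation, so that $\Omega_{\Ae}^{n}A=\Im d_{n}^{\T}=\Ker d_{n-1}^{\T}$ with $\pi_{n}\colon T_{n}\twoheadrightarrow\Omega_{\Ae}^{n}A$ and $\iota_{n}\colon\Omega_{\Ae}^{n}A\hookrightarrow T_{n-1}$. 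I shall use repeatedly that, by Proposition~\ref{prop:9}, $\Phi_{i}$ sends $[f]$ to the class of the cocycle $f\pi_{i}$ and $\Psi_{j}$ sends $w$ to $[z]$ for any $z\in T_{j}\otimes_{\Ae}M$ with $(\pi_{j}\otimes_{\Ae}\id)(z)=w$.

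For part~(1): by Theorem~\ref{theo:3}(1) the cup product $\Phi_{i}([f])\smile\Phi_{j}([g])$ is represented by the cocycle $(f\pi_{i})\circ\overline{g\pi_{j}}$, where $\overline{g\pi_{j}}\colon\boldsymbol{\Sigma}^{-j}\T\to\T$ is the chain lift of the cocycle $g\pi_{j}$ described in Remark~\ref{rem:2} --- the diagonal approximation enters the quasi-inverse $\alpha^{\prime}$ only through $(\varepsilon\hotimes_{A}\varepsilon)\widehat{\Delta}=\varepsilon$, so $\alpha^{\prime}$ and this direct lift agree up to homotopy. The key step is the claim that, for every $p\in\Z$, the degree-$j$ chain self-map $\overline{g\pi_{j}}$ restricts to a map $\Ker d_{p-1}^{\T}\to\Ker d_{p-j-1}^{\T}$, i.e. $\Omega_{\Ae}^{p}A\to\Omega_{\Ae}^{p-j}A$, which --- after identifying the quotient- and sub-models of the syzygy modules via the $\pi$'s and $\iota$'s --- is exactly $\Omega_{\Ae}^{p-j}(g)$. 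This is the standard fact that the equivalence between $\Ae\textrm{-}\underline{\mathrm{mod}}$ and the homotopy category of complete resolutions carries $\Omega_{\Ae}$ to the shift and respects composition. Granting the claim with $p=i+j$, a short chase through the relations $d_{i}^{\T}=\iota_{i}\pi_{i}$ and $\pi_{i}\iota_{i+1}=0$ identifies $\Phi_{i+j}^{-1}\bigl(\Phi_{i}([f])\smile\Phi_{j}([g])\bigr)$ with $[\,f\circ\Omega_{\Ae}^{i}(g)\,]=[f]\cup[g]$; since $\Phi=(\Phi_{i})_{i}$ is a graded linear isomorphism by Proposition~\ref{prop:9} and both algebras are graded commutative by Theorems~\ref{theo:9} and~\ref{theo:6}, part~(1) follows.

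For part~(2) I would run the parallel computation. By Theorem~\ref{theo:3}(2) and Proposition~\ref{prop:3}(2), the cap product $\Phi_{i}([f])\frown\Psi_{j}(w)$ is computed by choosing a cycle $z\in T_{j}\otimes_{\Ae}M$ over $w$, transporting $[z]$ along the weak-inverse $\beta^{\prime}$ of $\beta=\id_{\T}\hotimes_{\Ae}(\varepsilon\otimes_{A}\id_{M})$ from the proof of Theorem~\ref{theo:3}(2), and applying the chain map $(\ref{eq:11})$, whose essential ingredient is the evaluation $u(x^{\prime})$. On the other side, $\cap$ is by definition the evaluation $\phi\colon[f]\otimes(a\otimes_{\Ae}m)\mapsto f(a)\otimes_{\Ae}m$ read through the isomorphism $(\ref{eq:13})$. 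The point is that $(\ref{eq:13})$ is realised, at the level of complete resolutions, by the very same restriction-to-cycles procedure as in part~(1), now applied to $\T\otimes_{\Ae}M$ --- here one uses, as in the proof of Proposition~\ref{prop:9}, that $\T_{<0}$ consists of injectives so that elements of the stable tensor product lift to genuine cycles --- and under this identification the two chain-level formulas coincide, yielding the commutative square.

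I expect the main obstacle to be bookkeeping rather than anything conceptual: one must pin down the precise identification of the two incarnations of $\Omega_{\Ae}^{n}A$, namely $\Im d_{n}^{\T}\subseteq T_{n-1}$ versus $T_{n}/\Im d_{n+1}^{\T}$, compatibly with all the Koszul signs carried by $\smile$, $\frown$, $\cup$, $\cap$ and by the shift $\boldsymbol{\Sigma}$ in $\sHom_{\Ae}(\T,\T)$, so that the four products really agree on the nose and not merely up to sign. An alternative route for part~(1) that sidesteps part of this is to verify that $\cup$, transported to $\hHh^{\bullet}(A,-)$ via $\Phi$, satisfies properties $({\rm PI})$--$({\rm PII}_{2})$ --- $({\rm PI})$ being the degree-zero evaluation and $({\rm PII}_{1})$, $({\rm PII}_{2})$ the compatibility of Eu--Schedler's construction with connecting homomorphisms --- and then to invoke the uniqueness Theorem~\ref{theo:2}; this shortcut does not transfer verbatim to part~(2), since Eu--Schedler's $\cap$ is defined only with the bimodule $A$ in the cohomological slot, so $({\rm QII}_{1})$ cannot be checked for it directly.
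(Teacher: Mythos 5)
Your part~(1) is essentially the paper's own argument: the paper likewise computes $\Phi_{i}([f])\smile\Phi_{j}([g])$ via the composition product of Theorem~\ref{theo:3}(1), takes the lift $\overline{g\pi_{j}}$ of Remark~\ref{rem:2}, and verifies through a ladder of short exact sequences $0 \to \Omega_{\Ae}^{l+j+1}A \to T^{A}_{l+j} \to \Omega_{\Ae}^{l+j}A \to 0$ that the components $g_{l}$ of that lift induce $\Omega_{\Ae}^{l}(g)$ on syzygies, whence $[(f\pi_{i})g_{i}] = [f(\pi_{i}g_{i})] = \Phi_{i+j}([f]\cup[g])$. The ``standard fact'' you invoke is exactly this diagram chase, so part~(1) is in order (and your fallback via Theorem~\ref{theo:2} would also work).

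Part~(2) is where the substance lies, and your sketch leaves the two load-bearing steps unsupplied. First, the Eu--Schedler pairing $\cap$ is \emph{defined} only after transporting $\Omega_{\Ae}^{j}A \underline{\otimes}_{\Ae} M$ to $\Omega_{\Ae}^{i}A \underline{\otimes}_{\Ae} \Omega_{\Ae}^{j-i}M$ via the isomorphism (\ref{eq:13}); to compare it with $\frown$ one must therefore prove that $\frown$ is compatible with the corresponding isomorphisms $\hHh_{j}(A,M)\cong\hHh_{i}(A,\Omega_{\Ae}^{j-i}M)$. The paper obtains this from property $({\rm QII}_{2})$ of Proposition~\ref{prop:5} applied to the sequences $0 \to \Omega_{\Ae}^{i+1}M \to T^{M}_{i} \to \Omega_{\Ae}^{i}M \to 0$, which in turn needs the vanishing $\hHh_{r}(A,T^{M}_{s})=0$ (cited from the literature); your phrase ``realised by the very same restriction-to-cycles procedure'' does not produce this compatibility. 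Second, even after that reduction the square with $\phi$ is not a formal coincidence of formulas: writing $x\otimes_{\Ae}m=(\pi_{i}\otimes_{\Ae}\id)(a\otimes_{\Ae}m)$ and $\widehat{\Delta}(a)=(x_{i-l}\otimes_{A}y_{l})_{l}$, the composition-product representative of $\Phi_{i}([f])\frown\Psi_{i}(x\otimes_{\Ae}m)$ is $[x_{0}\otimes_{\Ae}f\pi_{i}(y_{i})m]$, whereas $\phi$ gives $f(x)\otimes_{\Ae}m$. Identifying the two under $\Psi_{0}$ uses the homotopy $\varepsilon\smile f\pi_{i}\sim f\pi_{i}$ together with the observation that the resulting correction term $(h\otimes_{\Ae}\id)(d_{i}^{\T}\otimes_{\Ae}\id)(a\otimes_{\Ae}m)$ vanishes because $(d_{i}^{\T}\otimes_{\Ae}\id)(a\otimes_{\Ae}m)=(\iota_{i}\otimes_{\Ae}\id)(x\otimes_{\Ae}m)=0$ by the defining property of the stable tensor product. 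Neither ingredient is bookkeeping; both must be stated and proved for the square in part~(2) to commute.
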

\begin{proof}
In order to prove (1), we will show that the following diagram commutes for every $i, j \in \Z$:
\[\xymatrix{
\underline{\Hom}_{\Ae}(\Omega_{\Ae}^{i}A, A) \otimes \underline{\Hom}_{\Ae}(\Omega_{\Ae}^{j}A, A) \ar[r]^-{\cup} \ar[d]_-{\Phi_{i} \otimes \Phi_{j}} &
\underline{\Hom}_{\Ae}(\Omega_{\Ae}^{i+j}A, A) \ar[dd]_-{\Phi_{i+j}} \\
\hHh^{i}(A, A) \otimes \hHh^{j}(A, A)  \ar[d]_-{\id \otimes \beta}  &
\\
\hHh^{i}(A, A) \otimes \Hh^{j}(\sHom_{\Ae}(\T^{A}, \T^{A})) \ar[r] &
\hHh^{i+j}(A, M) \\
}\]
where $\T^{A} \xrightarrow{\varepsilon} A$ is a minimal complete resolution, the lower horizontal morphism is the composition product, 
the morphism $\Phi_{*}$ is the isomorphism appeared in the proof of Proposition $\ref{prop:9}$ and
the morphism $\beta$ is the isomorphism constructed in Remark $\ref{rem:2}$.
Recall that we decompose each differential $d_{i}^{\T^{A}}$ as $d_{i}^{\T^{A}} = \iota_{i} \pi_{i}$ 
with $\pi_{i}: T_{i}^{A} \rightarrow \Omega_{\Ae}^{i}A$ and 
$\iota_{i}: \Omega_{\Ae}^{i}A \rightarrow T_{i-1}^{A}$. 
Let $[f] \otimes [g] \in \underline{\Hom}_{\Ae}(\Omega_{\Ae}^{i}A, A) \otimes \underline{\Hom}_{\Ae}(\Omega_{\Ae}^{j}A, A)$ be arbitrary.
Recalling the definitions of $\beta$ and of the autoequivalence $\Omega_{\Ae}$ of $\Ae\textrm{-}\underline{\mathrm{mod}}$, 
we have the following commutative diagram with exact rows:
\[\xymatrix@C=55pt{
0 \ar[r] &
\Omega_{\Ae}^{l+j+1}A \ar[r]^-{(-1)^{j}\iota_{l+j+1}} \ar[d]_-{\Omega_{\Ae}^{l+1}(g)}&
T_{l+j}^{A} \ar[r]^-{\pi_{l+j}} \ar[d]_-{g_{l}}&
\Omega_{\Ae}^{l+j}A \ar[d]^-{\Omega_{\Ae}^{l}(g)} \ar[r]&
0\\
0 \ar[r] &
\Omega_{\Ae}^{l+1}A \ar[r]^-{\iota_{l+1}} &
T_{l}^{A} \ar[r]^-{\pi_{l}} &
\Omega_{\Ae}^{l}A \ar[r]& 
0
}\]
where $l \in \Z$ and the morphism $g_{l}$ is a $l$-th component of a lifting cahin map of $g \pi_{j}$.
Then we have
\begin{align*}
    [f] \otimes [g]
    &\stackrel{{\small \Phi_{i} \otimes \Phi_{j}}}{\longmapsto} [f  \pi_{i}] \otimes [g \pi_{j}] \\
    &\stackrel{\id \otimes \beta}{\longmapsto} [f  \pi_{i}] \otimes \beta([g \pi_{j}]) \\
    &\longmapsto [(f  \pi_{i}) g_{i}]
\end{align*}
and 
\begin{align*}
    [f] \otimes [g] 
    &\stackrel{\cup}{\longmapsto} [f \Omega_{\Ae}^{i}(g)]  \\
    &\stackrel{\Phi_{i+j}}{\longmapsto} [(f \Omega_{\Ae}^{i}(g)) \pi_{i+j}] 
    = [f  (\Omega_{\Ae}^{i}(g)  \pi_{i+j})] = [f (\pi_{i} g_{i})].
\end{align*}
For the second statement, let $\T^{M}$ be a minimal complete resolution of $M$. For any $i \in \Z$, there exist two exact sequences
\begin{align*}
&0 \rightarrow \Omega_{\Ae}^{i+1}M \xrightarrow{\iota_{i+1}} T_{i}^{M} \xrightarrow{\pi_{i}} \Omega_{\Ae}^{i+1}M \rightarrow 0, \\
&0 \rightarrow A \otimes_{A} \Omega_{\Ae}^{i+1}M 
\xrightarrow{ \id \otimes_{A} \iota_{i+1}} A \otimes_{A} T_{i}^{M} 
\xrightarrow{\id \otimes_{A} \pi_{i}} A \otimes_{A} \Omega_{\Ae}^{i+1}M 
\rightarrow 0.
\end{align*}
It follows from \cite[Lemma 2.7]{ChrisJorgen14} that $\hHh_{r}(A, T_{s}^{M}) =0$ for all $r, s \in \Z$.
Thus the property $({\rm QII}_{2})$ of the cap product $\frown$ in Proposition $\ref{prop:5}$ implies that there exists a commutative square
\[\xymatrix{
\hHh^{i}(A, A) \otimes \hHh_{i}(A, \Omega_{\Ae}^{j-i}M) \ar[r]^-{\frown} \ar[d]&
\hHh_{0}(A, \Omega_{\Ae}^{j-i}M) \ar[d]\\
\hHh^{i}(A, A) \otimes \hHh_{j}(A, M) \ar[r]^-{\frown} &
\hHh_{0}(A, \Omega_{\Ae}^{j-i}M)
}\]
where $i, j \in \Z$ and the two vertical morphisms are isomorphisms. 
In order to complete the proof of (2), it suffices to show that a square
\[\xymatrix{
\underline{\Hom}_{\Ae}(\Omega_{\Ae}^{i}A, A) \otimes (\Omega_{\Ae}^{i}A \underline{\otimes}_{\Ae} \Omega_{\Ae}^{j-i}M) 
\ar[r]^-{\phi} \ar[d]_-{\Phi_{i} \otimes \Psi_{i}}&
A \underline{\otimes}_{\Ae} \Omega_{\Ae}^{j-i}M \ar[d]_-{\Psi_{0}}\\
\hHh^{i}(A, A) \otimes \hHh_{i}(A, \Omega_{\Ae}^{j-i}M) \ar[r]^-{\frown} &
\hHh_{0}(A, \Omega_{\Ae}^{j-i}M) 
}\]
is commutative for all $i, j \in \Z$.
Let 
\[
[f] \otimes (x \otimes_{\Ae} m) \in 
\underline{\Hom}_{\Ae}(\Omega_{\Ae}^{i}A, A) \otimes (\Omega_{\Ae}^{i}A \underline{\otimes}_{\Ae} \Omega_{\Ae}^{j-i}M)
\]
be arbitrary. 
Since 
$\pi_{i} \otimes_{\Ae} \id_{\Omega_{\Ae}^{j-i}M}$
is an epimorphism, we have $x \otimes_{\Ae} m = (\pi_{i} \otimes_{\Ae} \id)(a \otimes_{\Ae} m)$ for some $a \in T_{i}^{A}$.
Let $\widehat{\Delta}: \T^{A} \rightarrow \T^{A} \hotimes_{A} \T^{A}$ be a diagonal approximation, and we denote
\[
\widehat{\Delta}(a) = (x_{i-l} \otimes_{A} y_{l})_{l \in \Z} \in (\T^{A} \hotimes_{A} \T^{A})_{i}.
\]
Then we have 
\begin{align*}
    [f] \otimes (x \otimes_{\Ae} m)
    &\stackrel{\Phi_{i} \otimes \Psi_{i}}{\longmapsto} [f \pi_{i}] \otimes [a \otimes_{\Ae} m] \\
    &\stackrel{\frown}{\longmapsto} [x_{0} \otimes_{\Ae} f\pi_{i}(y_{i})m].
\end{align*}
On the other hand, since $\varepsilon \smile f \pi_{i}$ is homotopic to $f \pi_{i}$, 
there exists a morphism $h: T_{i-1}^{A} \rightarrow A$ such that 
$(\varepsilon \smile f \pi_{i}) -f \pi_{i} = h d_{i}^{\T^{A}}$.
Recalling the definition of the cup product $\smile$ on $\hHh$, we have 
\begin{align*}
&(\varepsilon \otimes_{\Ae} \id)( x_{0} f \pi_{i}(y_{i}) \otimes_{\Ae} m) \\
=&\ 
\varepsilon(x_{0}) f \pi_{i}(y_{i}) \otimes_{\Ae} m \\
=&\ 
(\varepsilon \smile f \pi_{i})(a) \otimes_{\Ae} m \\
=&\ 
f \pi_{i}(a) \otimes_{\Ae} m -(h \otimes_{\Ae} \id)(d_{i}^{\T^{A}} \otimes_{\Ae} \id)(a \otimes_{\Ae} m)\\
=&\ 
f(x) \otimes_{\Ae} m.
\end{align*}
Therefore, we get
\begin{align*}
    [f] \otimes (x \otimes_{\Ae} m)
    &\stackrel{\cap}{\longmapsto} [f(x) \otimes_{\Ae} m] \\
    &\stackrel{\Psi_{0}}{\longmapsto} [ x_{0} f \pi_{i}(y_{i}) \otimes_{\Ae} m].
\end{align*}
This completes the proof.
\end{proof}
%
%

We are now able to prove our main theorem.
Wang in \cite{Wang18} introduced \textit{singular Hochschild cochain complex} 
$C_{\textrm{sg}}(A, A)$ for any algebra $A$ over a field $k$ and defined 
the cup product $\cup_{\rm sg}$ on $C_{\textrm{sg}}(A, A)$.
We now recall the definitions of $C_{\textrm{sg}}(A, A)$ and of $\cup_{\rm sg}$ in \cite{Wang18}. 
The singular Hochschild cochain complex $C_{\textrm{sg}}(A, A)$ of $A$ is defined by the inductive limit of the inductive system of Hochschild cochain complexes 
\begin{align*}
    C(A, A) \stackrel{\theta_{0}}{\hookrightarrow}
    C(A, \Omega_{\rm nc}(A)) \hookrightarrow \cdots \hookrightarrow
    C(A, \Omega_{\rm nc}^{p}(A)) \stackrel{\theta_{p}}{\hookrightarrow} 
    C(A, \Omega_{\rm nc}^{p+1}(A)) \hookrightarrow \cdots,
\end{align*}
where $C(A, \Omega_{\rm nc}^{p}(A)) := \sHom_{\Ae}(\mathrm{Bar}(A), \Omega_{\rm nc}^{p}(A))$ with
the $A$-bimodule  $\Omega_{\rm nc}^{p}(A) := A \otimes \overline{A}^{\otimes p}$  
concentrated in degree $p$ of which the left action is the multiplication of $A$ and the right action is defined by
\begin{align*}
    (a_{0} \otimes \overline{a}_{1, p}) a_{p+1}
    := 
    \sum_{i=0}^{p}(-1)^{p-i} 
    a_{0} \otimes \overline{a}_{1,i} \otimes \overline{a_{i} a_{i+1}} \otimes \overline{a}_{i+2, p+1}
\end{align*}
for $a_{p+1} \in A$ and $a_{0} \otimes \overline{a}_{1, p} \in \Omega_{\rm nc}^{p}(A)$,
and the morphism $\theta_{p}$ is defined as
\[
C(A, \Omega_{\rm nc}^{p}(A)) \rightarrow C(A, \Omega_{\rm nc}^{p+1}(A)); \quad
f \mapsto f \otimes \id_{\overline{A}}.
\]
Here we have used the canonical isomorphism 
\[
\Hom_{\Ae}(\mathrm{Bar}(A)_{i}, \Omega_{\rm nc}^{p}(A))
\cong 
\Hom_{k}(\overline{A}^{\otimes i}, \Omega_{\rm nc}^{p}(A)). 
\]
For $i \in \Z$, we denote  
\[
\HH_{\rm sg}^{i}(A, A) := \Hh^{i}(C_{\textrm{sg}}(A, A)).
\]

Moreover, for $m, n, p, q \in \Z$, the cup product 
\begin{align*}
    \cup_{\rm sg}: 
    C^{m-p}(A, \Omega_{\rm nc}^{p}(A)) \otimes C^{n-q}(A, \Omega_{\rm nc}^{q}(A))
    \rightarrow
    C^{m+n-p-q}(A, \Omega_{\rm nc}^{p+q}(A))
\end{align*} 
is defined by
\[
 f \otimes g \mapsto 
 \left( \mu \otimes \id_{\overline{A}}^{\otimes p+q} \right) 
 \left( \id_{A} \otimes f \otimes \id_{\overline{A}}^{\otimes m} \right)
 \left( g \otimes \id_{\overline{A}}^{\otimes q}\right),
\]
where $\mu: A \otimes A \rightarrow A$ is the multiplication.
Wang has proved the following two results.
%
%
\begin{prop}[{\cite[Proposition 4.2 and Corollary 4.2]{Wang18}}]
Under the same notation above, the singular Hochschild cochain complex $C_{{\rm sg}}(A, A)$ 
equipped with the cup product $\cup_{\rm sg}$ forms a differential graded associative algebra such that
the induced cohomology ring $\HH_{\rm sg}^{\bullet}(A, A)$ is graded commutative.
\end{prop}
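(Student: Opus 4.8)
This proposition is due to Wang; a complete argument is given in \cite[Section 4]{Wang18}, and I only outline its structure here. The plan splits into two parts: (a) showing that $(C_{\rm sg}(A,A), \cup_{\rm sg})$ is a differential graded associative unital algebra, and (b) deducing that the induced product on $\HH_{\rm sg}^{\bullet}(A,A)$ is graded commutative.

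For part (a) one argues at the cochain level. With the explicit formula for $\cup_{\rm sg}$ recalled above, associativity $(f \cup_{\rm sg} g) \cup_{\rm sg} h = f \cup_{\rm sg} (g \cup_{\rm sg} h)$ follows by a direct expansion, and the class of $\id_{A}\colon A \to A = \Omega_{\rm nc}^{0}(A)$ serves as a two-sided unit. The Leibniz identity $d(f \cup_{\rm sg} g) = d(f) \cup_{\rm sg} g + (-1)^{|f|} f \cup_{\rm sg} d(g)$ is likewise obtained by computing both sides against the differential of $\mathrm{Bar}(A)$ together with the internal differential of $\Omega_{\rm nc}^{\bullet}(A)$. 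The one point requiring real care is that $\cup_{\rm sg}$ be compatible with the structure maps $\theta_{p}\colon C(A,\Omega_{\rm nc}^{p}(A)) \hookrightarrow C(A,\Omega_{\rm nc}^{p+1}(A))$ of the inductive system, so that it descends to the colimit $C_{\rm sg}(A,A)$; this reduces to comparing $\theta_{p+q}(f \cup_{\rm sg} g)$ with $f \cup_{\rm sg} \theta_{q}(g)$ under the identifications $\Hom_{\Ae}(\mathrm{Bar}(A)_{i}, \Omega_{\rm nc}^{p}(A)) \cong \Hom_{k}(\overline{A}^{\otimes i}, \Omega_{\rm nc}^{p}(A))$, while keeping track of the twisted right $A$-action on $\Omega_{\rm nc}^{p}(A)$ and of the degree shift $p$. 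I expect this colimit-compatibility, and the signs it dictates, to be the main technical obstacle.

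For part (b) there are two routes. The first mimics Gerstenhaber's classical argument: one introduces a cup-one type operation on cochains and checks that it is a homotopy exhibiting $f \cup_{\rm sg} g - (-1)^{|f||g|} g \cup_{\rm sg} f$ as a coboundary modulo lower-order terms, so that the two orders of multiplication agree in cohomology. The second is conceptual: under Wang's isomorphism $\HH_{\rm sg}^{i}(A,A) \cong \Hom_{\mathcal{D}_{\rm sg}(\Ae)}(A, \boldsymbol{\Sigma}^{i}A)$ the product $\cup_{\rm sg}$ becomes the composition product, whose graded commutativity is then formal --- of the same nature as that of the Eu--Schedler product $\cup$ recalled in Theorem \ref{theo:6}, being the graded endomorphism ring of the tensor unit in the monoidal singularity category of $\Ae$. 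Wang carries out the cochain computation for part (a) by hand and obtains commutativity through the singularity-category picture; either method yields the statement.
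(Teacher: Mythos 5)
The paper does not prove this statement: it is quoted directly from Wang \cite{Wang18} (Proposition 4.2 and Corollary 4.2), exactly as you do, so there is no in-paper argument to compare against. Your outline of Wang's proof --- cochain-level verification of associativity, unit and Leibniz rule together with compatibility with the structure maps $\theta_{p}$ of the inductive system, and graded commutativity obtained either by a cup-one homotopy or via the identification of $\HH_{\rm sg}^{\bullet}(A,A)$ with the graded endomorphism ring of $A$ in $\mathcal{D}_{{\rm sg}}(\Ae)$ --- is an accurate and appropriately attributed summary of the cited source.
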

%
%

Before the second Wang's result, we  recall the definition of the singularity categories.
Let $A$ be a (two-sided) Noetherian algebra $A$ over a field $k$, and let $\mathcal{D}^{\textrm{b}}(A)$ be the bounded derived category of finitely generated $A$-modules. Then the \textit{singularity category} $\mathcal{D}_{{\rm sg}}(A)$ of $A$ is defined to be the Verdier quotient 
$\mathcal{D}_{{\rm sg}}(A)
=
\mathcal{D}^{\textrm{b}}(A) / \mathcal{K}^{\textrm{b}}(\mathrm{proj} A)$,
where $\mathcal{K}^{\textrm{b}}(\mathrm{proj} A)$ is the bounded homotopy category of finitely generated projective $A$-modules.
%
%
\begin{prop}[{\cite[Proposition 4.7]{Wang18}}]
Let $A$ be a Noetherian algebra over a field $k$. Then there exists an isomorphism 
\[
\HH_{{\rm sg}}^{\bullet}(A, A) \rightarrow 
\bigoplus_{i \in \Z} \Hom_{\mathcal{D}_{{\rm sg}}(\Ae)}(A, \boldsymbol{\Sigma}^{i} A) 
\]
of graded commutative algebras $($of degree $0)$,
where the product on the right hand side is given by the Yoneda product.
\end{prop}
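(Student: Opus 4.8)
The plan is to realize Wang's colimit complex $C_{\rm sg}(A,A)$ as a complex computing morphisms in the singularity category by means of syzygies, and then to check that the cup product $\cup_{\rm sg}$ is carried to the Yoneda (composition) product. The key structural input is that the non-commutative differential bimodules $\Omega_{\rm nc}^{p}(A)$ are syzygies of $A$: for each $p\ge 0$ the $\Ae$-linear surjection $\mathrm{Bar}(A)_{p}=A\otimes\overline{A}^{\otimes p}\otimes A\to\Omega_{\rm nc}^{p}(A)$ given by $a_{0}\otimes\overline{a}_{1,p}\otimes a_{p+1}\mapsto(a_{0}\otimes\overline{a}_{1,p})\,a_{p+1}$ has kernel isomorphic to $\Omega_{\rm nc}^{p+1}(A)$, so there is a short exact sequence $0\to\Omega_{\rm nc}^{p+1}(A)\to\mathrm{Bar}(A)_{p}\to\Omega_{\rm nc}^{p}(A)\to 0$ with projective middle term. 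When $A$ is finite dimensional these bimodules are finitely generated (in the general Noetherian case one instead works with finitely generated syzygies obtained from a finitely generated projective resolution of $A$); inductively one gets $\Omega_{\rm nc}^{p}(A)\cong\Omega_{\Ae}^{p}A\cong\boldsymbol{\Sigma}^{-p}A$ in $\mathcal{D}_{{\rm sg}}(\Ae)$, and Wang's transition map $\theta_{p}$ is, up to a shift, exactly the map on $\sHom_{\Ae}(\mathrm{Bar}(A),-)$ induced by this short exact sequence, i.e.\ the connecting isomorphism $\Omega_{\rm nc}^{p}(A)\to\boldsymbol{\Sigma}\Omega_{\rm nc}^{p+1}(A)$ in $\mathcal{D}_{{\rm sg}}(\Ae)$.

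Since $\mathrm{Bar}(A)$ is a projective resolution of $A$ and $\Omega_{\rm nc}^{p}(A)$ is concentrated in a single degree, taking cohomology (which commutes with the filtered colimit) yields
\[
\HH_{{\rm sg}}^{i}(A,A)=\varinjlim_{p}\Hh^{i}\!\big(C(A,\Omega_{\rm nc}^{p}(A))\big)=\varinjlim_{p}\Ext_{\Ae}^{i+p}(A,\Omega_{\rm nc}^{p}(A)).
\]
The localization functor $\mathcal{D}^{{\rm b}}(\Ae)\to\mathcal{D}_{{\rm sg}}(\Ae)$ sends $\Ext_{\Ae}^{i+p}(A,\Omega_{\rm nc}^{p}(A))=\Hom_{\mathcal{D}^{{\rm b}}(\Ae)}(A,\boldsymbol{\Sigma}^{i+p}\Omega_{\rm nc}^{p}(A))$ to $\Hom_{\mathcal{D}_{{\rm sg}}(\Ae)}(A,\boldsymbol{\Sigma}^{i+p}\boldsymbol{\Sigma}^{-p}A)=\Hom_{\mathcal{D}_{{\rm sg}}(\Ae)}(A,\boldsymbol{\Sigma}^{i}A)$, a group independent of $p$; by the previous paragraph these maps are compatible with the $\theta_{p}$, hence assemble into a degree-$0$ graded homomorphism $\Theta\colon\HH_{{\rm sg}}^{\bullet}(A,A)\to\bigoplus_{i\in\Z}\Hom_{\mathcal{D}_{{\rm sg}}(\Ae)}(A,\boldsymbol{\Sigma}^{i}A)$.

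The core of the argument is that $\Theta$ is bijective. After dimension shifting in the first variable one rewrites the colimit as $\varinjlim_{p}\Ext_{\Ae}^{i}(\Omega_{\Ae}^{p}A,\Omega_{\Ae}^{p}A)$, and this is well known to compute $\Hom_{\mathcal{D}_{{\rm sg}}(\Ae)}(A,\boldsymbol{\Sigma}^{i}A)$ when $\Ae$ is Noetherian: every morphism in the Verdier quotient is a left fraction $A\xleftarrow{\,s\,}Z\to\boldsymbol{\Sigma}^{i}A$ in $\mathcal{D}^{{\rm b}}(\Ae)$ with $\Cone(s)$ perfect, and a soft-truncation argument — pre-composing $s$ with higher syzygy-denominators and using that the syzygies of $A$ are finitely generated — lets one take $Z=\Omega_{\Ae}^{p}A$ for $p\gg 0$, which gives surjectivity; applying the same reasoning to fractions representing $0$ gives injectivity. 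Making the truncation and finiteness bookkeeping precise, and verifying that this identification carries the connecting isomorphisms of the short exact sequences above to the shift functor of $\mathcal{D}_{{\rm sg}}(\Ae)$, is the step I expect to be the main obstacle.

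Finally, for multiplicativity I would unwind Wang's formula for $\cup_{\rm sg}$: on each pair of terms it is induced by the maps $C(A,\Omega_{\rm nc}^{p}(A))\otimes C(A,\Omega_{\rm nc}^{q}(A))\to C(A,\Omega_{\rm nc}^{p+q}(A))$ built from the multiplication $\mu$ and identities of $\overline{A}$, and a direct (if notationally heavy) computation shows that under the isomorphisms $\Omega_{\rm nc}^{p}(A)\cong\boldsymbol{\Sigma}^{-p}A$ these become exactly the composition $\Hom_{\mathcal{D}_{{\rm sg}}(\Ae)}(A,\boldsymbol{\Sigma}^{i}A)\otimes\Hom_{\mathcal{D}_{{\rm sg}}(\Ae)}(A,\boldsymbol{\Sigma}^{j}A)\to\Hom_{\mathcal{D}_{{\rm sg}}(\Ae)}(A,\boldsymbol{\Sigma}^{i+j}A)$, i.e.\ the Yoneda product; the unit class, represented by $\mu\in C^{0}(A,A)$, is sent to $\id_{A}$. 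Hence $\Theta$ is an isomorphism of graded algebras, and since $\HH_{{\rm sg}}^{\bullet}(A,A)$ has already been shown graded commutative, the Yoneda product on $\bigoplus_{i\in\Z}\Hom_{\mathcal{D}_{{\rm sg}}(\Ae)}(A,\boldsymbol{\Sigma}^{i}A)$ is graded commutative as well.
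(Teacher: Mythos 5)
The paper does not prove this statement: it is quoted verbatim from Wang \cite[Proposition 4.7]{Wang18} and used as a black box, so there is no in-paper proof to compare against. Your sketch reconstructs what is essentially Wang's (and, ultimately, Buchweitz's) argument: identify $\Omega_{\rm nc}^{p}(A)$ with the $p$-th syzygy of $A$ via the short exact sequences $0\to\Omega_{\rm nc}^{p+1}(A)\to\mathrm{Bar}(A)_{p}\to\Omega_{\rm nc}^{p}(A)\to 0$, rewrite $\HH_{\rm sg}^{i}(A,A)$ as the colimit $\varinjlim_{p}\Ext_{\Ae}^{i+p}(A,\Omega_{\rm nc}^{p}(A))$ with transition maps the connecting homomorphisms, invoke the standard colimit description of morphism spaces in the Verdier quotient $\mathcal{D}^{\rm b}(\Ae)/\mathcal{K}^{\rm b}(\mathrm{proj}\,\Ae)$, and then check multiplicativity by hand. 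That is the right route, and every structural claim you make (the syzygy identification, the fact that projective summands die in the colimit, the unit going to $\id_{A}$) is correct.

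That said, as written this is an outline rather than a proof, and you say so yourself at the two places where the real work sits: (i) the bijectivity of $\Theta$, which requires the calculus-of-fractions/truncation argument together with the bookkeeping needed because $\Omega_{\rm nc}^{p}(A)$ is \emph{not} finitely generated for a general Noetherian $A$ (so one must compare it with a finitely generated syzygy up to projective summands before it can represent an object of $\mathcal{D}_{\rm sg}(\Ae)$, and one also needs $\Ae$ itself to be Noetherian for $\mathcal{D}_{\rm sg}(\Ae)$ to be the stated quotient); and (ii) the verification that $\cup_{\rm sg}$ becomes the Yoneda product, which is the genuinely computational heart of Wang's Proposition 4.7 and cannot be dispatched by "a direct computation shows". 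Neither deferral is a wrong turn, but both would have to be filled in for this to stand as a proof.
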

%
%

If $A$ is a finite dimensional Frobenius algebra, then \cite[Theorem 2.1]{Rick89} implies that
the canonical functor $F: \Ae$-$\underline{\mathrm{mod}} \rightarrow \mathcal{D}_{\textrm{sg}}(\Ae)$ is 
an equivalence of triangulated categories such that 
$F \circ \Omega_{\Ae} \simeq \boldsymbol{\Sigma}^{-1} \circ F$.
Thus we have an isomorphism 
\[
\underline{\Hom}_{\Ae}(\Omega_{\Ae}^{\bullet}A, A) 
\rightarrow 
\bigoplus_{i \in \Z} \Hom_{\mathcal{D}_{\textrm{sg}}(\Ae)}(A, \boldsymbol{\Sigma}^{i} A)
\]
of graded algebras.
Consequently, from
Lemma $\ref{lem:12} (1)$, we have the following result, which is our main theorem.
%
%
\begin{theo}  \label{theo:7}
Let $k$ be a field and $A$ a finite dimensional Frobenius $k$-algebra. Then there exists an isomorphism
\[
\hHh^{\bullet}(A, A) \cong \HH_{{\rm sg}}^{\bullet}(A, A)
\] 
as graded commutative algebras. 
\end{theo}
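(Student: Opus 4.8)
The theorem will follow by chaining together the isomorphisms already assembled in the excerpt. The plan is to identify the three intermediate graded-algebra objects in play: the Tate--Hochschild cohomology ring $\hHh^{\bullet}(A,A)$ with its cup product $\smile$; the stable Hochschild cohomology ring $\underline{\Hom}_{\Ae}(\Omega_{\Ae}^{\bullet}A,A)$ with Eu--Schedler's product $\cup$; and the singular Hochschild cohomology ring $\HH^{\bullet}_{\rm sg}(A,A)$ with Wang's product $\cup_{\rm sg}$, together with its identification with $\bigoplus_{i\in\Z}\Hom_{\mathcal{D}_{\rm sg}(\Ae)}(A,\boldsymbol{\Sigma}^{i}A)$ under the Yoneda product. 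The claim is that all three are isomorphic as graded commutative algebras, and the argument is to build the bridge in two hops: first $\hHh^{\bullet}(A,A)\cong\underline{\Hom}_{\Ae}(\Omega_{\Ae}^{\bullet}A,A)$, then $\underline{\Hom}_{\Ae}(\Omega_{\Ae}^{\bullet}A,A)\cong\HH^{\bullet}_{\rm sg}(A,A)$.

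\textbf{Step 1: the first hop.} This is exactly Lemma~\ref{lem:12}(1): the family of maps $\Phi_{i}\colon\underline{\Hom}_{\Ae}(\Omega_{\Ae}^{i}A,A)\to\hHh^{i}(A,A)$ constructed in the proof of Proposition~\ref{prop:9} assembles into a graded isomorphism, and Lemma~\ref{lem:12}(1) checks that it intertwines $\cup$ with $\smile$ (the proof there reduces, via the composition-product description of $\smile$ from Theorem~\ref{theo:3} and the explicit quasi-inverse $\beta$ of Remark~\ref{rem:2}, to the identity $f\cup g=f\,\Omega_{\Ae}^{i}(g)$ matching $[(f\pi_i)g_i]$). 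So for this step I would simply invoke Lemma~\ref{lem:12}(1); nothing new is needed.

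\textbf{Step 2: the second hop.} Here I would use Rickard's theorem: since $A$ is a finite dimensional self-injective (indeed Frobenius) algebra, so is $\Ae$, and by \cite[Theorem~2.1]{Rick89} the canonical functor $F\colon\Ae\text{-}\underline{\mathrm{mod}}\to\mathcal{D}_{\rm sg}(\Ae)$ is an equivalence of triangulated categories with $F\circ\Omega_{\Ae}\simeq\boldsymbol{\Sigma}^{-1}\circ F$. Applying $F$ to Hom-spaces gives, for each $i\in\Z$, an isomorphism $\underline{\Hom}_{\Ae}(\Omega_{\Ae}^{i}A,A)\cong\Hom_{\mathcal{D}_{\rm sg}(\Ae)}(\boldsymbol{\Sigma}^{-i}A,A)\cong\Hom_{\mathcal{D}_{\rm sg}(\Ae)}(A,\boldsymbol{\Sigma}^{i}A)$, and because $F$ is a triangle functor the composition in $\Ae\text{-}\underline{\mathrm{mod}}$ (twisted by powers of $\Omega_{\Ae}$) goes over to the Yoneda composition in $\mathcal{D}_{\rm sg}(\Ae)$; thus $\underline{\Hom}_{\Ae}(\Omega_{\Ae}^{\bullet}A,A)$ with $\cup$ is isomorphic as a graded algebra to $\bigoplus_{i}\Hom_{\mathcal{D}_{\rm sg}(\Ae)}(A,\boldsymbol{\Sigma}^{i}A)$ with the Yoneda product. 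By Wang's \cite[Proposition~4.7]{Wang18}, the latter is isomorphic as a graded commutative algebra to $\HH^{\bullet}_{\rm sg}(A,A)$ with $\cup_{\rm sg}$. Concatenating the two hops yields the asserted isomorphism $\hHh^{\bullet}(A,A)\cong\HH^{\bullet}_{\rm sg}(A,A)$ of graded commutative algebras, which completes the proof.

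\textbf{Main obstacle.} The substantive content is not in the present theorem but in Lemma~\ref{lem:12}(1), whose proof is the genuinely delicate comparison; here, once that lemma is granted, the only point requiring care is that Rickard's equivalence $F$ is \emph{multiplicative} in the sense needed, i.e.\ that it carries the $\Omega_{\Ae}$-twisted composition product $\cup$ on $\underline{\Hom}_{\Ae}(\Omega_{\Ae}^{\bullet}A,A)$ to the Yoneda product, so that all three products are matched on the nose and not merely up to sign or up to an ungraded isomorphism. This is standard — it follows from the fact that the suspension-compatibility isomorphism $F\circ\Omega_{\Ae}\simeq\boldsymbol{\Sigma}^{-1}\circ F$ can be taken to be a natural isomorphism of triangle functors — but it is the one place where a short verification, rather than a pure citation, is in order.
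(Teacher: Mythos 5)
Your proposal matches the paper's own argument: the paper likewise proves the theorem by combining Lemma \ref{lem:12}(1) (the isomorphism $\hHh^{\bullet}(A,A)\cong\underline{\Hom}_{\Ae}(\Omega_{\Ae}^{\bullet}A,A)$ of graded algebras) with Rickard's equivalence $F\colon \Ae\text{-}\underline{\mathrm{mod}}\to\mathcal{D}_{\rm sg}(\Ae)$ satisfying $F\circ\Omega_{\Ae}\simeq\boldsymbol{\Sigma}^{-1}\circ F$ and Wang's identification of $\HH_{\rm sg}^{\bullet}(A,A)$ with $\bigoplus_{i}\Hom_{\mathcal{D}_{\rm sg}(\Ae)}(A,\boldsymbol{\Sigma}^{i}A)$ under the Yoneda product. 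Your added remark on the multiplicativity of $F$ is a point the paper passes over silently, but the route is essentially identical.
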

%
%

It is easily checked that Tate-Hochschild cohomology rings are derived invariants of finite dimensional Frobenius algebras.
Indeed, suppose that two finite dimensional Frobenius algebras $A$ and  $B$ are derived equivalent,
i.e., $\mathcal{D}^{{\rm b}}(A)$ is equivalent to $\mathcal{D}^{{\rm b}}(B)$ as triangulated categories. Then there exists an equivalence 
$F_{\rm sg}: \mathcal{D}_{{\rm sg}}(\Ae) \rightarrow \mathcal{D}_{{\rm sg}}(B^{\rm e})$ 
 of triangulated categories such that
$F_{\rm sg}(A) \cong B$ (see \cite[Section 6]{Zimme14Book} for instance). 
Then we see that the isomorphism
\[
\Hom_{\mathcal{D}_{\textrm{sg}}(\Ae)}(A, \boldsymbol{\Sigma}^{i} A) 
\rightarrow
\Hom_{\mathcal{D}_{\textrm{sg}}(B^{\rm e})}(B, \boldsymbol{\Sigma}^{i} B) 
\]
induced by $F_{\rm sg}$ commutes with the Yoneda products.
Consequently, Theorem \ref{theo:7} yields our claim.

%
%
\section{Duality theorems in Tate-Hochschild theory}
\label{sec:4}
Let $A$ be a finite dimensional Frobenius algebra over a field $k$.
In this section, we prove that the Tate-Hochschild duality
\begin{align*}
    \hHh^{n}(A, M) \cong \hHh_{-n-1}(A, {}_{1}M_{\nu^{-1}})
\end{align*}
appeared in Section $\ref{sec:2}$ is induced by the cap product for any integer $n$ and any $A$-bimodule $M$, and we prove that the cup product on Tate-Hochschild cohomology
extends the cup product and the cap product on Hochschild (co)homology.

Applying the duality above for $n=0$ and $M=A$, we have 
\[
\hHh^{0}(A, A) \cong \hHh_{-1}(A, {}_{1}A_{\nu^{-1}}).
\]
Then an element $\omega \in \hHh_{-1}(A, {}_{1}A_{\nu^{-1}})$ is called the \textit{fundamental class} of $A$ 
if the image under the isomorphism above of $\omega$ is equal to $1 \in \hHh^{0}(A, A)$.

%
%
\begin{theo} \label{theo:8}
The fundamental class $\omega \in \hHh_{-1}(A, {}_{1}A_{\nu^{-1}})$ 
induces an isomorphism
-- $\frown \omega: \hHh^{n}(A, M) \rightarrow \hHh_{-n-1}(A, {}_{1}M_{\nu^{-1}})$
for any $n \in \Z$ and any $A$-bimodule $M$.
\end{theo}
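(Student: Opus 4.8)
The plan is to show that $-\frown\omega$ coincides with the abstract duality isomorphism $\delta^{n}_{M}\colon\hHh^{n}(A,M)\xrightarrow{\ \sim\ }\hHh_{-n-1}(A,{}_{1}M_{\nu^{-1}})$ of Section \ref{sec:2}; since the latter is already known to be an isomorphism, this proves the theorem. First note that $M\otimes_{A}{}_{1}A_{\nu^{-1}}\cong{}_{1}M_{\nu^{-1}}$ as $A$-bimodules, naturally in $M$, so that $-\frown\omega$ does take values in $\hHh_{-n-1}(A,{}_{1}M_{\nu^{-1}})$. Both $\{-\frown\omega\}$ and $\{\delta^{n}_{M}\}$ are natural in $M$ and commute with the connecting homomorphisms attached to a short exact sequence of bimodules in the coefficient slot: for $-\frown\omega$ this is property $(\mathrm{QII}_{1})$ of Proposition \ref{prop:5}, applied with the fixed class $\omega$ in the homology slot; for $\delta$ it follows from its construction. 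Now for an arbitrary bimodule $M$ choose the Sanada exact sequence $0\to M\to W\to C(M)\to 0$ with $W=\Hom_{k}(A_{A},M_{A})$ weakly projective, and apply the exact functor $-\otimes_{A}{}_{1}A_{\nu^{-1}}$ (exact because $A_{A}$ is projective): the resulting sequence $0\to{}_{1}M_{\nu^{-1}}\to{}_{1}W_{\nu^{-1}}\to{}_{1}C(M)_{\nu^{-1}}\to 0$ has weakly projective middle term, so by Lemma \ref{lem:6} and Corollary \ref{cor:1} the connecting homomorphisms yield isomorphisms $\hHh^{n-1}(A,C(M))\cong\hHh^{n}(A,M)$ and $\hHh_{-n}(A,{}_{1}C(M)_{\nu^{-1}})\cong\hHh_{-n-1}(A,{}_{1}M_{\nu^{-1}})$ compatible with both $-\frown\omega$ and $\delta$; the Sanada sequence $0\to K(M)\to A\otimes M\to M\to 0$ gives analogous compatible isomorphisms in the other direction (cf. Corollary \ref{cor:2}). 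Hence it suffices to treat $n=0$: for $n\geq 1$ one gets $\hHh^{n}(A,M)\cong\hHh^{0}(A,C^{\,n}(M))$ compatibly, and for $n\leq -1$ one gets $\hHh^{n}(A,M)\cong\hHh^{0}(A,K^{\,-n}(M))$ compatibly.

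For $n=0$ I would invoke the Yoneda lemma. By Proposition \ref{prop:9} (see Remark \ref{rem:1}) there is a natural isomorphism $\hHh^{0}(A,-)\cong\underline{\Hom}_{\Ae}(A,-)$ carrying $1\in\hHh^{0}(A,A)$ to $\id_{A}$, and by Lemma \ref{lem:12}(1) its degree-$0$ part identifies the unit of $\hHh^{\bullet}(A,A)$ with $\id_{A}\in\underline{\End}_{\Ae}(A)$. Since $\delta^{0}\colon\hHh^{0}(A,-)\xrightarrow{\ \sim\ }\hHh_{-1}(A,{}_{1}(-)_{\nu^{-1}})$ is a natural isomorphism, $\psi:=(\delta^{0})^{-1}\circ(-\frown\omega)$ is a natural endomorphism of the functor $\hHh^{0}(A,-)\cong\underline{\Hom}_{\Ae}(A,-)$. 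By Yoneda, $\psi$ is precomposition with the element $\psi_{A}(\id_{A})\in\underline{\End}_{\Ae}(A)=\hHh^{0}(A,A)$, and $\psi$ is the identity transformation exactly when that element is $\id_{A}=1$. Now $\psi_{A}(1)=(\delta^{0}_{A})^{-1}(1\frown\omega)$; the unit property of the cap product, which comes from Theorem \ref{theo:3}(2) since $1$ is represented by the augmentation $\varepsilon$, gives $1\frown\omega=\omega$ under the identification $A\otimes_{A}{}_{1}A_{\nu^{-1}}={}_{1}A_{\nu^{-1}}$; and by definition of the fundamental class $\delta^{0}_{A}(1)=\omega$. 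Hence $\psi_{A}(1)=1$, so $\psi=\id$ and $-\frown\omega=\delta^{0}_{M}$ for every bimodule $M$. Together with the first paragraph, $-\frown\omega=\delta^{n}_{M}$ for all $n$ and $M$, which is in particular an isomorphism.

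The steps that require care are: checking that applying $-\otimes_{A}{}_{1}A_{\nu^{-1}}$ to each of Sanada's four sequences preserves both exactness and weak projectivity of the middle terms, and that the homology-side connecting maps coincide with the ones through which $-\frown\omega$ is natural (this is precisely where $(\mathrm{QII}_{1})$ enters, with $\omega$ frozen in the second argument); and the unit identity $1\frown\omega=\omega$, for which one unwinds Theorem \ref{theo:3}(2) -- capping with the class of $\varepsilon$ corresponds, under the composition-product description, to an augmentation-preserving chain map $\T\to\T$, which is a homotopy equivalence by \cite[Lemma 5.3]{AvraMart02} and so induces the identity on $\hHh_{*}$. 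This unit identity is the only genuine obstacle; the rest is formal. (Alternatively, the $n=0$ case can be carried out by a direct chain-level computation on the complete resolution of $A$ displayed in Section \ref{sec:2}, matching $-\frown\omega$ against the isomorphism $\hHh^{0}(A,M)\cong M^{A}/N_{A}(M)$ of $(\ref{eq:5})$ and the cokernel description of $\hHh_{-1}(A,{}_{1}M_{\nu^{-1}})$ coming from $(\ref{eq:3})$; the Yoneda argument above avoids that bookkeeping.)
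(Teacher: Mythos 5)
Your proof is correct and follows essentially the same route as the paper's: identify $-\frown\omega$ with the abstract duality isomorphism of Section \ref{sec:2} by checking agreement in degree $0$ via naturality (your Yoneda argument is exactly the paper's ``represent $u\in\hHh^{0}(A,M)$ by a bimodule map $A\to M$ and use naturality'' step in different packaging) and then dimension-shift. You in fact spell out the two points the paper leaves implicit, namely the Sanada-sequence dimension shift via $(\mathrm{QII}_{1})$ and the unit identity $1\frown\omega=\omega$, so nothing is missing.
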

\begin{proof}
If $\T$ is a complete resolution of $A$, there exist isomorphisms
\begin{align*}
    \sHom_{\Ae}(\T, M) 
    \cong 
    \T^{\vee} \otimes_{\Ae} M 
    \cong 
    {}_{\nu^{-1}}(\T^{\vee})_{1} \otimes_{\Ae} {}_{1}M_{\nu^{-1}},
\end{align*}
where ${}_{\nu^{-1}}(\T^{\vee})_{1}$ is an acyclic chain complex of finitely generated projective $A$-bimodules.
Observe that it is the 1-shifted complex of some complete resolution of the bimodule ${}_{\nu^{-1}}A_{\nu^{-1}}$.
Thus, there exists a complete resolution $\T^{\prime}$ of $A$ such that $\boldsymbol{\Sigma} \T^{\prime} \cong {}_{\nu^{-1}}(\T^{\vee})_{1}$,
so that we have isomorphisms
\begin{align*} 
    \hHh^{n}(A, M) 
    \cong 
    \Hh_{-n}(\boldsymbol{\Sigma} \T^{\prime} \otimes_{\Ae} {}_{1}M_{\nu^{-1}}) 
    \cong 
    \hHh_{-n-1}(A, {}_{1}M_{\nu^{-1}}).
\end{align*}
Clearly, the composite is natural in $M$ and compatible with long exact sequences in the following sense: for any short exact sequence of $A$-bimodules
\[
0 \rightarrow L \rightarrow M \rightarrow N \rightarrow 0,
\]
there exists a commutative diagram with long exact sequences
\[\xymatrix@C=15pt@R=25pt{
\cdots \ar[r]&
\hHh^{n}(A, M) \ar[r] \ar[d]_-{\cong} &
\hHh^{n}(A, N) \ar[r] \ar[d]_-{\cong} &
\hHh^{n+1}(A, L) \ar[r] \ar[d]_-{\cong}  &
\cdots\\
\cdots \ar[r]&
\hHh_{-n-1}(A, {}_{1}M_{\nu^{-1}}) \ar[r]&
\hHh_{-n-1}(A, {}_{1}N_{\nu^{-1}}) \ar[r]&
\hHh_{-n-2}(A, {}_{1}L_{\nu^{-1}}) \ar[r] &
\cdots
}\]  

Let $\varphi^{M}_{n}$ denote the isomorphism $\hHh^{n}(A, M) \rightarrow \hHh_{-n-1}(A, {}_{1}M_{\nu^{-1}})$.
We claim that $\varphi^{M}_{0}$ is given by the cap product with $\omega \in \hHH_{-1}(A, {}_{1}A_{\nu^{-1}})$.
For any $u \in \hHh^{0}(A, M)$, there exists a morphism $f : A \rightarrow M$ of $A$-bimodules such that $u$ is represented by $f$ and such that
$f$ induces morphisms
\begin{align*}
    \hHh^{0}(A, A) \rightarrow \hHh^{0}(A, M) \quad \mbox{and} \quad
    \hHh_{-1}(A, {}_{1}A_{\nu^{-1}} ) \rightarrow \hHh_{-1}(A, {}_{1}M_{\nu^{-1}})
\end{align*}
given by $v \mapsto u \smile v$ and by $w \mapsto u \frown w$, respectively.
Hence the naturality of $\varphi^{*}_{0}$ implies that 
we have 
\begin{align*}
    \varphi^{M}_{0}([u]) = \varphi^{M}_{0}([u] \smile 1) = [u] \smile \varphi^{A}_{0}(1) = [u] \frown \omega.
\end{align*}
A dimension-shifting argument shows that $\varphi^{M}_{*}$ coincides with -- $\frown \omega$ in all degrees.
 \end{proof}
%
%

It follows from Corollary $\ref{cor:3}$ and Theorem $\ref{theo:8}$ that the cup product is equivalent to the cap product in the following sense. 
%
%
\begin{cor} \label{cor:4}
For any $A$-bimodules $M, N$ and $r, s \in \Z$, there exists a commutative diagram 
\[\xymatrix@C=35pt@R=30pt{
\hHh^{r}(A, M) \otimes \hHh^{s}(A, N) \ar[r]^-{\smile} \ar[d]_{\id \otimes (- \frown \,\omega)}&
\hHh^{r+s}(A, M \otimes_{A} N) \ar[d]^{- \frown \,\omega}\\
\hHh^{r}(A, M) \otimes \hHH_{-s-1}(A, {}_{1}N_{\nu^{-1}}) \ar[r]^-{ \frown } &
\hHH_{-r-s-1}(A, {}_{1}(M \otimes_{A} N)_{\nu^{-1}})
}\]  
where the two vertical morphisms are isomorphisms.
\end{cor}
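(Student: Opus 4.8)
The plan is to deduce Corollary \ref{cor:4} directly from two results already at our disposal: the associativity-type identity $(\alpha \smile \beta) \frown \omega = \alpha \frown (\beta \frown \omega)$ from Corollary \ref{cor:3}, and the fact established in Theorem \ref{theo:8} that the map $-\frown \omega \colon \hHh^{*}(A, -) \to \hHh_{-*-1}(A, {}_{1}(-)_{\nu^{-1}})$ is an isomorphism in every degree and for every coefficient bimodule. The key point is that once these two statements are in hand, the commutativity of the square is essentially a formal consequence; there is very little left to compute.

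First I would fix $\alpha \in \hHh^{r}(A, M)$ and $\beta \in \hHh^{s}(A, N)$. Going around the square one way, $\alpha \otimes \beta$ maps to $\alpha \smile \beta \in \hHh^{r+s}(A, M \otimes_{A} N)$ and then, applying $-\frown \omega$, to $(\alpha \smile \beta) \frown \omega \in \hHh_{-r-s-1}(A, {}_{1}(M \otimes_{A} N)_{\nu^{-1}})$. Going the other way, $\alpha \otimes \beta$ maps to $\alpha \otimes (\beta \frown \omega)$ with $\beta \frown \omega \in \hHh_{-s-1}(A, {}_{1}N_{\nu^{-1}})$, and then under the cap product to $\alpha \frown (\beta \frown \omega)$. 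Here I must be slightly careful that the cap product in the bottom row of the square, namely
\[
\frown \colon \hHh^{r}(A, M) \otimes \hHh_{-s-1}(A, {}_{1}N_{\nu^{-1}}) \to \hHh_{-r-s-1}(A, {}_{1}(M \otimes_{A} N)_{\nu^{-1}}),
\]
is the one whose target uses the identification ${}_{1}M_{\nu^{-1}} \otimes_{A} {}_{1}N_{\nu^{-1}} \cong {}_{1}(M \otimes_{A} N)_{\nu^{-1}}$ coming from the $A$-bimodule structure; with that bookkeeping in place, $\alpha \frown (\beta \frown \omega)$ lands in precisely the stated group. Corollary \ref{cor:3}, applied with the bimodule $N$ replaced by ${}_{1}N_{\nu^{-1}}$ and the coefficient bimodule for $\omega$ being ${}_{1}A_{\nu^{-1}}$, then gives $(\alpha \smile \beta) \frown \omega = \alpha \frown (\beta \frown \omega)$, which is exactly the commutativity of the square.

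The remaining assertion, that the two vertical maps are isomorphisms, is immediate: the left vertical map is $\id \otimes (-\frown\omega)$, an isomorphism because $-\frown\omega$ is one on $\hHh^{s}(A, N)$ by Theorem \ref{theo:8} (and tensoring over the field $k$ with the identity on $\hHh^{r}(A, M)$ preserves isomorphisms), and the right vertical map $-\frown\omega$ on $\hHh^{r+s}(A, M \otimes_{A} N)$ is an isomorphism by the same theorem.

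I expect the only genuine subtlety — rather than an obstacle — to be the twist bookkeeping: making sure that the instance of Corollary \ref{cor:3} being invoked really is the one with the coefficient bimodules ${}_{1}M_{\nu^{-1}}$, ${}_{1}N_{\nu^{-1}}$ and the compatible identification of their tensor product with ${}_{1}(M \otimes_{A} N)_{\nu^{-1}}$, and that the fundamental class $\omega$ enters in the $N$-slot of the cap product consistently on both sides. Since Corollary \ref{cor:3} is stated for arbitrary $A$-bimodules $L, M, N$, it applies verbatim after the substitution, so no new argument is needed; the proof is therefore short, consisting of the displayed identity from Corollary \ref{cor:3} together with the isomorphism statement of Theorem \ref{theo:8}.
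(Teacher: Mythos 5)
Your proposal is correct and is exactly the paper's argument: the paper states Corollary \ref{cor:4} as an immediate consequence of the identity $(\alpha \smile \beta) \frown \omega = \alpha \frown (\beta \frown \omega)$ from Corollary \ref{cor:3} together with the isomorphism $-\frown\omega$ of Theorem \ref{theo:8}, which is precisely what you do. Your extra care with the identification $N \otimes_{A} {}_{1}A_{\nu^{-1}} \cong {}_{1}N_{\nu^{-1}}$ and its compatibility with $M \otimes_{A} -$ is the right bookkeeping and matches the paper's implicit conventions.
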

%
%

Let $\nu \in \Aut(A)$ be the Nakayama automorphism of $A$.
It is known that there exist two Tate-Hochschild duality results for Frobenius algebras:
\begin{align*}
\hHh_{i}(A, A) \cong D(\hHh_{-i-1}(A, A)) \mbox{ and } \hHh^{i}(A, A) \cong D(\hHh^{-i-1}(A, {}_{1}A_{\nu^{2}}))
\end{align*}
for any $i \in \Z$, where the first is proved by Eu and Schedler in \cite{EuSched09} and 
the second is proved by Bergh and Jorgensen in \cite{BerghJorgen13}.
We will give another proof of the two Tate-Hochschild duality results.
%
%
\begin{cor} \label{cor:7}
Let $\nu$ be the Nakayama automorphism of $A$. Then there exist two isomorphisms
\begin{align*}
\hHh_{i}(A, A) \cong D(\hHh_{-i-1}(A, A)) \mbox{ and } \hHh^{i}(A, A) \cong D(\hHh^{-i-1}(A, {}_{1}A_{\nu^{2}}))
\end{align*}
for all $i \in \Z$.
\end{cor}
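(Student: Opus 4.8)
The plan is to derive both isomorphisms in Corollary \ref{cor:7} from the duality established in Theorem \ref{theo:8}, namely $\hHh^{n}(A, M) \cong \hHh_{-n-1}(A, {}_{1}M_{\nu^{-1}})$, combined with the standard vector-space duality for Tate (co)homology over a Frobenius algebra. First I would recall that for a finite dimensional Frobenius algebra $A$, the enveloping algebra $\Ae$ is again Frobenius, with Nakayama automorphism $\nu \otimes \nu^{\circ}$, so one has a natural pairing between Tate-Hochschild homology and cohomology. Concretely, applying $D(-) = \Hom_{k}(-, k)$ to a complete resolution $\T$ of $A$ over $\Ae$ and using the identification $D(\T \otimes_{\Ae} M) \cong \sHom_{\Ae}(\T, D(M))$ together with $D({}_{1}A_{\beta}) \cong {}_{\beta}A_{\nu}$ (coming from the $A$-bimodule isomorphism ${}_{1}A_{\nu} \cong D(A)$ of Section \ref{sec:2}), I would obtain
\[
D(\hHh_{i}(A, M)) \cong \hHh^{i}(A, D(M))
\]
for all $i \in \Z$ and any finite dimensional $A$-bimodule $M$.

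Next I would combine this with Theorem \ref{theo:8}. Taking $M = A$ in the displayed isomorphism and using $D(A) \cong {}_{1}A_{\nu}$, we get $D(\hHh_{i}(A, A)) \cong \hHh^{i}(A, {}_{1}A_{\nu})$. Now apply Theorem \ref{theo:8} with $M = {}_{1}A_{\nu}$ and $n = i$: since ${}_{1}({}_{1}A_{\nu})_{\nu^{-1}} = {}_{1}A_{1} = A$, this yields $\hHh^{i}(A, {}_{1}A_{\nu}) \cong \hHh_{-i-1}(A, A)$, and composing the two gives the first isomorphism $\hHh_{i}(A, A) \cong D(\hHh_{-i-1}(A, A))$ after dualizing once more (using that all spaces are finite dimensional, so $D \circ D \cong \id$). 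For the second isomorphism, I would instead start from $D(\hHh^{i}(A, A))$; by Theorem \ref{theo:8}, $\hHh^{i}(A, A) \cong \hHh_{-i-1}(A, {}_{1}A_{\nu^{-1}})$, so $D(\hHh^{i}(A, A)) \cong D(\hHh_{-i-1}(A, {}_{1}A_{\nu^{-1}})) \cong \hHh^{-i-1}(A, D({}_{1}A_{\nu^{-1}}))$. Finally compute $D({}_{1}A_{\nu^{-1}})$: twisting the isomorphism ${}_{1}A_{\nu} \cong D(A)$ on both sides, $D({}_{1}A_{\nu^{-1}}) \cong {}_{\nu^{-1}}(D(A))_{1} \cong {}_{\nu^{-1}}({}_{1}A_{\nu})_{1} = {}_{\nu^{-1}}A_{\nu}$, and then applying Lemma \ref{lem:11} (the Tate version) to shift the left twist $\nu^{-1}$ to the right, $\hHh^{-i-1}(A, {}_{\nu^{-1}}A_{\nu}) \cong \hHh^{-i-1}(A, {}_{1}A_{\nu^{2}})$. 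This produces $\hHh^{i}(A, A) \cong D(\hHh^{-i-1}(A, {}_{1}A_{\nu^{2}}))$ after one more application of $D \circ D \cong \id$.

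The main obstacle I anticipate is bookkeeping the Nakayama twists correctly at each step --- in particular verifying the isomorphism $D(\hHh_{i}(A, M)) \cong \hHh^{i}(A, D(M))$ with the precise bimodule twist, since the Frobenius structure on $\Ae$ introduces a factor $\nu \otimes \nu^{\circ}$ that must be reconciled with the one-sided twisting conventions of Section \ref{sec:2}, and making sure that the identifications ${}_{\alpha}({}_{\beta}M_{\gamma})_{\delta} = {}_{\alpha\beta}M_{\gamma\delta}$ and $D({}_{\alpha}M_{\beta}) \cong {}_{\beta}(D M)_{\alpha}$ are applied consistently. Once those natural isomorphisms are pinned down, the two statements of the corollary follow by the purely formal composition of Theorem \ref{theo:8}, the duality $D(\hHh_{i}(A,-)) \cong \hHh^{i}(A, D(-))$, Lemma \ref{lem:11}, and finite-dimensionality; no further homological input is needed. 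I would also remark that this recovers the results of Eu--Schedler \cite{EuSched09} and Bergh--Jorgensen \cite{BerghJorgen13} and, via Theorem \ref{theo:8}, shows they are consequences of the cap-product duality.
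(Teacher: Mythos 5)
Your proposal is correct and follows essentially the same route as the paper: both rest on the adjunction isomorphism $\sHom_{\Ae}(\T, D(M)) \cong \D(\T \otimes_{\Ae} M)$, giving $\hHh^{i}(A, D(M)) \cong D(\hHh_{i}(A, M))$, combined with $D(A) \cong {}_{1}A_{\nu}$ and Theorem \ref{theo:8} applied to suitable coefficients. The only cosmetic difference is that for the second isomorphism you pass through $D({}_{1}A_{\nu^{-1}}) \cong {}_{\nu^{-1}}A_{\nu}$ and invoke Lemma \ref{lem:11}, where the simpler observation ${}_{\nu^{-1}}A_{\nu} \cong {}_{1}A_{\nu^{2}}$ (conjugation by $\nu$ as a bimodule isomorphism) already suffices; the paper instead applies Theorem \ref{theo:8} directly to $\hHh_{i}(A, {}_{1}A_{\nu})$.
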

\begin{proof}
Let $\T$ be a complete resolution of $A$ and $M$ a finitely generated $A$-bimodule. The adjointness of $\sHom$ and $\otimes$ implies that
there exists an isomorphism
\begin{align*}
    \sHom_{\Ae}(\T, D(M)) \cong \D (\T \otimes_{\Ae} M).
\end{align*}
Thus we have for any $i \in \Z$
\begin{align*}
    \hHh^{i}(A, D(M)) \cong \Hh^{i}(\D (\T \otimes_{\Ae} M)) \cong D(\hHh_{i}(A, M)).
\end{align*}
If $M = D(A)$, then we get
\begin{align*}
    \hHh^{i}(A, A) \cong D(\hHh_{i}(A, D(A))) \cong D(\hHh_{i}(A, {}_{1}A_{\nu}))\cong D(\hHh^{-i-1}(A, {}_{1}A_{\nu^{2}})),
\end{align*}
where the second isomorphism is induced by the isomorphism $D(A) \cong {}_{1}A_{\nu}$ of $A$-bimodules, and
the third isomorphism is induced by the isomorphism of Theorem $\ref{theo:8}$. 
Similarly, if $M =A$, then we obtain isomorphisms
\begin{align*}
    D(\hHh_{i}(A, A)) \cong \hHh^{i}(A, D(A)) \cong \hHh^{i}(A, {}_{1}A_{\nu}) \cong \hHh_{-i-1}(A, A).
\end{align*}

\end{proof}
%
%

Our next and last aim is to show 
that the cup product on Tate-Hochschild cohomology can be considered as
an extension of the cup product and the cap product on Hochschild (co)homology.
%
%
\begin{prop} \label{prop:7}
Let $M, N$ be $A$-bimodules and $r, s \in \Z$. 
We denote by $\widehat{\smile}$ the cup product on $\hHh$ and by $\widehat{\frown}$ the cap product on $\hHh$. 
Then the following two statements hold.
\begin{enumerate}
\item[(1)] 
    There exists a commutative square
    \[\xymatrix{
    \Hh^{r}(A, M) \otimes \Hh^{s}(A, N) \ar[r]^-{\smile} \ar[d] &
    \Hh^{r+s}(A, M \otimes_{A} N) \ar[d]  \\
    \hHh^{r}(A, M) \otimes \hHh^{s}(A, N) \ar[r]^-{\widehat{\smile}} &
    \hHh^{r+s}(A, M \otimes_{A} N)
    }\]
\item[(2)]
    There exist three commutative squares
    \begin{enumerate}
        \item[(a)] the case $r=0, s = 0$
           \[\xymatrix{
            \Hh^{0}(A, M) \otimes \hHh_{0}(A, N) \ar[r]^-{\frown} \ar[d] &
            \hHh_{0}(A, M \otimes_{A} N) \ar@{=}[d] \\
            \hHh^{0}(A, M) \otimes \hHh_{0}(A, N) \ar[r]^-{\widehat{\frown}} &
            \hHh_{0}(A, M \otimes_{A} N)
            }\]
        \item[(b)] the case $r=0, s > 0$
            \[\xymatrix{
            \Hh^{0}(A, M) \otimes \Hh_{s}(A, N) \ar[r]^-{\frown} \ar[d] &
            \Hh_{s}(A, M \otimes_{A} N) \ar@{=}[d] \\
            \hHh^{0}(A, M) \otimes \hHh_{s}(A, N) \ar[r]^-{\widehat{\frown}} &
            \hHh_{s}(A, M \otimes_{A} N)
            }\]
        \item[(c)] the case $r>0, s>0 $ with $ s \geq r$
            \[\xymatrix{
            \Hh^{r}(A, M) \otimes \Hh_{s}(A, N) \ar[r]^-{\frown} \ar@{=}[d] &
            \Hh_{s-r}(A, M \otimes_{A} N)  \\
            \hHh^{r}(A, M) \otimes \hHh_{s}(A, N) \ar[r]^-{\widehat{\frown}} &
            \hHh_{s-r}(A, M \otimes_{A} N) \ar[u]
            }\]
    \end{enumerate}
\end{enumerate}
In all of the four diagrams above, the vertical morphisms consist of the morphisms constructed after Lemma $\ref{lem:2}$.
\end{prop}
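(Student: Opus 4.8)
The plan is to verify each of the four squares at the level of (co)chains, working throughout with a \emph{single} complete resolution $\T$ of $A$ together with a diagonal approximation $\widehat{\Delta}\colon \T\to \T\hotimes_{A}\T$. This is allowed: the Tate-Hochschild cup and cap products are independent of the complete resolution and the diagonal approximation (as recorded after Theorem \ref{theo:2}, and by Theorem \ref{theo:4}), and likewise the Hochschild cup and cap products are independent of the projective resolution and its diagonal approximation. By the remark following Theorem \ref{theo:1}, the truncation $\widehat{\Delta}_{\geq 0}$ is a diagonal approximation for the projective resolution $\P:=\T_{\geq 0}$, so the Hochschild side may be computed using $\P$ and $\widehat{\Delta}_{\geq 0}$. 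Finally, I would recall that the vertical comparison morphisms of Section \ref{sec:2} are induced by the inclusion of cochain complexes $\sHom_{\Ae}(\T_{\geq 0},-)\hookrightarrow \sHom_{\Ae}(\T,-)$ and by the surjection $\T\otimes_{\Ae}-\twoheadrightarrow \T_{\geq 0}\otimes_{\Ae}-$, both of which are the identity on underlying modules in every degree that occurs below.

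For (1) I would represent classes of $\Hh^{r}(A,M)$ and $\Hh^{s}(A,N)$ by cocycles $u\colon T_{r}\to M$ and $v\colon T_{s}\to N$; since $r,s\geq 0$ these are simultaneously cocycles over $\P$ and over $\T$, and under the comparison maps they represent the corresponding classes of $\hHh^{r}(A,M)$ and $\hHh^{s}(A,N)$. Evaluating $u\smile v=(u\otimes_{A}v)\widehat{\Delta}_{\geq 0}$ and $u\widehat{\smile}v=(u\hotimes_{A}v)\widehat{\Delta}$ on $T_{r+s}$, in both cases only the single component $\widehat{\Delta}^{(r+s)}_{s}\colon T_{r+s}\to T_{r}\otimes_{A}T_{s}$ contributes (because $u$ has degree $r$ and $v$ degree $s$) and the Koszul signs agree; since $0\leq s\leq r+s$, this component occurs verbatim in $\widehat{\Delta}_{\geq 0}$. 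Hence $u\smile v$ and $u\widehat{\smile}v$ are literally the same cochain $T_{r+s}\to M\otimes_{A}N$, which gives (1).

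Part (2) is handled the same way. For (2)(c), with $r,s\geq 1$, I would represent a class of $\Hh^{r}(A,M)$ by $u\colon T_{r}\to M$ and a class of $\Hh_{s}(A,N)$ by a cycle $w=\sum z\otimes_{\Ae}n$; writing $\widehat{\Delta}(z)=(x_{s-i}\otimes_{A}y_{i})_{i\in\Z}$, a direct computation shows that $u\frown w$ (via $\widehat{\Delta}_{\geq 0}$) and $u\widehat{\frown}w$ (via $\widehat{\Delta}$) both equal $\sum (-1)^{r(s-r)}x_{s-r}\otimes_{\Ae}(u(y_{r})\otimes_{A}n)$; since $0\leq r\leq s$ the component $\widehat{\Delta}^{(s)}_{r}$ survives truncation, and this chain already lies in degree $s-r\geq 0$, so it is fixed by $\T\otimes_{\Ae}-\twoheadrightarrow \T_{\geq 0}\otimes_{\Ae}-$. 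For (2)(b), a class of $\Hh^{0}(A,M)$ comes from a bimodule map $f\colon A\to M$, i.e.\ from the cocycle $f\varepsilon\colon T_{0}\to M$, whose image in $\sHom_{\Ae}(\T,M)$ represents $\bar f\in\hHh^{0}(A,M)$; the computation of (2)(c) with $r=0$ then shows $(f\varepsilon)\frown w$ and $(f\varepsilon)\widehat{\frown}w$ coincide on chains, and since $s>0$ the targets $\Hh_{s}$ and $\hHh_{s}$ agree. For (2)(a), both rows land in $\hHh_{0}(A,M\otimes_{A}N)$ and the top arrow is the functorial morphism (equivalently, the degree-$0$ Hochschild cap) induced by $f\otimes_{A}\id_{N}\colon N\cong A\otimes_{A}N\to M\otimes_{A}N$; representing a class of $\hHh_{0}(A,N)$ by a cycle $w\in\Ker(d^{\T}_{0}\otimes_{\Ae}\id_{N})$, I would use $(\varepsilon\hotimes_{A}\varepsilon)\widehat{\Delta}=\varepsilon$ and the identity $f(a)=af(1)$ to exhibit a homotopy between the image of $w$ under the functorial map and $(f\varepsilon)\widehat{\frown}w$, closing the last square. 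Alternatively, (2)(a) can be deduced from (2)(b) by a connecting-homomorphism argument based on property $({\rm QII}_{2})$ of Proposition \ref{prop:5} and the vanishing of $\hHh_{*}$ on bimodules of the form $M\otimes_{A}T^{N}_{s}$.

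The main obstacle I expect is precisely the bookkeeping around the truncation $\T_{\geq 0}\hookrightarrow \T$: one must check that the single component of the diagonal approximation picked out by the (co)chain formula — the index $s$ for the cup product, the index $r$ for the cap product — lies between $0$ and the total degree, so that it really is the same map in $\widehat{\Delta}$ and in $\widehat{\Delta}_{\geq 0}$; this is exactly where the hypotheses $r,s\geq 0$ (for the cup product) and $s\geq r\geq 0$ (for the cap product) are used, and it is also the reason why no such compatibility can hold once a negative index intervenes. A secondary delicate point is the degree-$0$ cohomology case of (2)(a)--(b), where the comparison morphism is the quotient $M^{A}\twoheadrightarrow M^{A}/N_{A}(M)$: one has to make sure that $f\varepsilon$ genuinely represents the image class $\bar f$ and, in (2)(a), that the top-row product is the functorial map. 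Once these identifications are granted, the chain-level equalities are routine.
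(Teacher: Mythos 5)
Your proposal is correct and follows essentially the same route as the paper: fix one complete resolution $\T$ with diagonal approximation $\widehat{\Delta}$, use its truncation $\widehat{\Delta}_{\geq 0}$ as the diagonal approximation for $\P=\T_{\geq 0}$, and observe that the chain-level formulas for $\smile$, $\frown$ and $\widehat{\smile}$, $\widehat{\frown}$ pick out the same component $\widehat{\Delta}^{(n)}_{p}$ (with $0\leq p\leq n$ under the stated degree hypotheses), so the products agree on the nose under the comparison maps induced by $\vartheta:\T\to\P$. The only cosmetic difference is in (2)(a), where the paper simply notes $\hHh_{0}(A,N)\leq\Hh_{0}(A,N)$ and that the cap product preserves cycles, whereas you route through the functorial description of the degree-zero cap; both reduce to the same chain-level identity.
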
 
\begin{proof}
Let $\T$ be a complete resolution of $A$, and we denote by $\P$ 
the truncation $\T_{\geq 0}$.
Let
$\Delta: \P \rightarrow \P \otimes_{A} \P$ be a diagonal approximation associated with a diagonal approximation 
$\widehat{\Delta}: \T \rightarrow \T \hotimes_{A} \T$, which has been constructed after
Theorem $\ref{theo:1}$.

Since the chain map $\Delta$ consists of non-negative components of $\widehat{\Delta} = \prod (\prod \widehat{\Delta}^{(n)}_{p})$, 
we have a commutative square
\[\xymatrix{
\sHom_{\Ae}(\P, M) \otimes \sHom_{\Ae}(\P, N) \ar[r]^-{\smile} 
\ar[d]_-{\sHom_{\Ae}(\vartheta, M) \otimes \sHom_{\Ae}(\vartheta, N)}  &
\sHom_{\Ae}(\P, M \otimes_{A} N) 
\ar[d]^-{\sHom_{\Ae}(\vartheta, M \otimes_{A} N)} \\
\sHom_{\Ae}(\T, M) \otimes \sHom_{\Ae}(\T, N) \ar[r]^-{\widehat{\smile}} &
\sHom_{\Ae}(\T, M \otimes_{A} N)
}\]
where $\vartheta$ is the canonical chain map $ \T \rightarrow \P$. 
Thus, we get the commutative square in $(1)$.

Recalling that $\hHh_{0}(A, N) \leq \Hh_{0}(A, N)$, we see that $u \frown z \in \hHh_{0}(A, M \otimes_{A} N)$ 
for $u \in \Hh^{0}(A, M)$ and $z \in \hHh_{0}(A, N)$.
Thus we obtain the commutative square in $($a$)$ of $(2)$.

For any $r \geq 0$ and $s >0$ with $s \geq r$, we have a commutative square
\[\xymatrix{
\sHom_{\Ae}(\T, M)^{r} \otimes (\T \otimes_{\Ae} M)_{s}  \ar[r]^-{ \frown } \ar[rd]_-{ \widehat{\frown} } &
 (\P \otimes_{\Ae} (M \otimes_{A} N))_{s-r} \\
&
(\T \otimes_{\Ae} (M \otimes_{A} N))_{s-r} \ar[u]_-{\vartheta \otimes_{\Ae}\id_{M \otimes_{A} N}}
}\]
compatible with the differentials.
Thus we have the remaining commutative squares of $(2)$.
\end{proof}
%
%

%
%
\begin{cor} \label{cor:5}
Let $r \geq 0$ and $s \geq 1$ be such that $r-s \leq -1$. Then there exists a commutative diagram
\[\xymatrix{ 
\Hh^{r}(A, M) \otimes \hHh_{s-1}(A, {}_{1}N_{\nu^{-1}}) \ar[r]^-{\frown}  \ar[d]&
\hHh_{s-r-1}(A, {}_{1}(M \otimes_{A} N)_{\nu^{-1}}) \ar[d] \\
\hHh^{r}(A, M) \otimes \hHh^{-s}(A, N) \ar[r]^-{\widehat{\smile}}  &
\hHh^{r-s}(A, M \otimes_{A} N)
}\]
where the vertical morphism on the right hand side is always an isomorphism, 
and the vertical morphism on the left hand side is an isomorphism if $r \not = 0$ and an epimorphism otherwise.
\end{cor}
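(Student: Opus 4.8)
The plan is to combine the already-established compatibility squares from Proposition \ref{prop:7} with the cap-to-cup translation furnished by the fundamental class in Theorem \ref{theo:8} and Corollary \ref{cor:4}. Concretely, for $r \geq 0$ and $s \geq 1$ with $r - s \leq -1$, I would start from the commutative square in Proposition \ref{prop:7}(2), case (c) — which applies precisely because $s \geq r$ — relating the Hochschild cap product $\frown$ on $\Hh^{r}(A,M) \otimes \Hh_{s}(A,N)$ to the Tate-Hochschild cap product $\widehat{\frown}$, with the vertical map on the left an identity and the one on the right the canonical comparison morphism $\hHh_{s-r}(A, M\otimes_A N) \to$ (wait — here I must be careful with indices). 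Since the target degree of the cap product is negative-friendly, I would instead work with $\hHh_{s-1}(A, {}_1 N_{\nu^{-1}})$ in the source, which by the Tate-Hochschild homology description in Section \ref{sec:2} equals $\Hh_{s-1}(A, {}_1 N_{\nu^{-1}})$ when $s - 1 > 0$ (and sits inside $\Hh_{0}$ when $s=1$), so the left vertical map is the comparison map from Hochschild to Tate-Hochschild homology, which is an isomorphism for $s-1 \geq 1$, i.e. $r \neq 0$ forces nothing but $s \geq 2$ gives the iso automatically; the edge case $r=0$ (hence $s \geq 1$) only yields an epimorphism because $\hHh_0 \hookrightarrow \Hh_0$ goes the other way — I would track this carefully.

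The heart of the argument is the identification of $\widehat{\frown}$ with $\widehat{\smile}$ via Theorem \ref{theo:8}: the cap product with the fundamental class $\omega \in \hHh_{-1}(A, {}_1 A_{\nu^{-1}})$ gives a natural isomorphism $-\,\widehat{\frown}\,\omega : \hHh^{n}(A, N) \xrightarrow{\sim} \hHh_{-n-1}(A, {}_1 N_{\nu^{-1}})$. Applying this with $n = -s$ converts an element of $\hHh_{s-1}(A, {}_1 N_{\nu^{-1}})$ into an element of $\hHh^{-s}(A, N)$, which is exactly the left vertical arrow in the diagram to be proved. Corollary \ref{cor:4} then says that the square relating $\widehat{\frown}$ (by $\omega$) on the right-hand factor to $\widehat{\smile}$ commutes; feeding in $\hHh^{r}(A,M)$ on the left factor and chasing through, $\widehat{\smile}$ on $\hHh^{r}(A,M) \otimes \hHh^{-s}(A,N)$ lands in $\hHh^{r-s}(A, M\otimes_A N)$, and the composite $-\,\widehat{\frown}\,\omega$ identifies $\hHh_{s-r-1}(A, {}_1(M\otimes_A N)_{\nu^{-1}})$ with $\hHh^{r-s}(A, M\otimes_A N)$. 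So the right vertical arrow of the claimed diagram is precisely this instance of the Theorem \ref{theo:8} isomorphism, hence always an isomorphism.

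Assembling: first I would write down the Proposition \ref{prop:7}(2)(c) square (with the bimodule twist ${}_1(-)_{\nu^{-1}}$ inserted, legitimate since twisting is exact and commutes with everything in sight via Lemma \ref{lem:11}); then stack on top of it the Corollary \ref{cor:4} square (specialized so that its left vertical map $\id \otimes (-\widehat{\frown}\,\omega)$ matches the left vertical map of the target diagram, and its right vertical map $-\widehat{\frown}\,\omega$ matches the target's right vertical map); finally invoke Theorem \ref{theo:3}(2), i.e. that $\widehat{\frown}$ on the Tate level agrees with the composition product, to glue the Hochschild-level cap square into the Tate-level cap square — but actually Proposition \ref{prop:7}(2) already does this gluing, so the three squares compose directly. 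The only genuine bookkeeping point — which I expect to be the main obstacle — is getting the degree indices and the Nakayama twists to line up in all edge cases, in particular checking that for $r = 0$ the left vertical map is merely an epimorphism (because $\hHh_0 \subseteq \Hh_0$, not the reverse) while for $r \neq 0$ one has $s \geq 2$ and the comparison $\Hh_{s-1} \xrightarrow{\sim} \hHh_{s-1}$ is an isomorphism; everything else is a formal diagram chase using the naturality already recorded in Proposition \ref{prop:7} and Corollary \ref{cor:4}.
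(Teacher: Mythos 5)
Your assembly is the paper's own: stack the square of Corollary \ref{cor:4} (which converts $\widehat{\smile}$ into $\widehat{\frown}$ by capping with the fundamental class $\omega$) on top of the relevant square of Proposition \ref{prop:7}(2) applied with coefficients ${}_{1}N_{\nu^{-1}}$ in homological degree $s-1$, and read off the composite vertical maps; the right vertical arrow is then the Theorem \ref{theo:8} isomorphism $-\,\widehat{\frown}\,\omega$ (inverted), hence always an isomorphism. That part is fine and matches the paper exactly.

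There is, however, a concrete error in your justification of the iso/epi dichotomy for the left vertical arrow, and it gives the wrong prediction in one range of degrees. The left vertical map is the tensor product of the comparison $\Hh^{r}(A,M)\rightarrow \hHh^{r}(A,M)$ with the isomorphism $\hHh_{s-1}(A,{}_{1}N_{\nu^{-1}})\cong \hHh^{-s}(A,N)$; the second factor is \emph{always} an isomorphism by Theorem \ref{theo:8}. The failure at $r=0$ therefore comes entirely from the first factor: $\Hh^{0}(A,M)=M^{A}\twoheadrightarrow M^{A}/N_{A}(M)=\hHh^{0}(A,M)$ is the canonical surjection of $(\ref{eq:5})$, which need not be injective. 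You instead attribute the failure to the inclusion $\hHh_{0}\subseteq\Hh_{0}$ on the homology side. But that comparison does not occur in the left vertical arrow of the target diagram (both rows carry the Tate group $\hHh_{s-1}$ in the second slot), and your reasoning would predict an isomorphism when $r=0$ and $s\geq 2$ --- since then $s-1\geq 1$ and $\hHh_{s-1}=\Hh_{s-1}$ --- whereas the statement asserts only an epimorphism for every $r=0$. Likewise, for $r\neq 0$ the isomorphism is supplied by $\Hh^{r}(A,M)\cong\hHh^{r}(A,M)$ for $r>0$, not by the homology comparison $\Hh_{s-1}\cong\hHh_{s-1}$ you cite. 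Correcting this bookkeeping, the rest of the argument goes through as you describe.
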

\begin{proof}
Corollary $\ref{cor:4}$ and Proposition $\ref{prop:7}$ imply 
that there exists a commutative diagram
\[\xymatrix{
\hHh^{r}(A, M) \otimes \hHh^{-s}(A, N) \ar[r]^-{\widehat{\smile}} \ar[d]_-{\id \otimes(- \widehat{\frown}\,\omega)}  &
\hHh^{r-s}(A, M \otimes_{A} N) \ar[d]^-{- \widehat{\frown}\, \omega} \\
\hHh^{r}(A, M) \otimes \hHh_{s-1}(A, {}_{1}N_{\nu^{-1}}) \ar[r]^-{\widehat{\frown}}  &
\hHh^{r-s}(A, {}_{1}(M \otimes_{A} N)_{\nu^{-1}}) \\
\Hh^{r}(A, M) \otimes \hHh_{s-1}(A, {}_{1}N_{\nu^{-1}}) \ar[r]^-{\frown}  \ar[u]&
\hHh_{s-r-1}(A, {}_{1}(M \otimes_{A} N)_{\nu^{-1}}) \ar[u]
}\]
This completes the proof.
\end{proof}
%
%

\section*{Acknowledgments} 
The author would like to thank his PhD supervisor Professor Katsunori Sanada 
for giving such an interesting topic and 
for valuable comments and suggestions for the development of the paper.
The author also would like to thank Professor Tomohiro Itagaki 
for helpful discussions and comments on the paper and warm encouragement.

%
%

\bibliographystyle{alpha}
\bibliography{ref}

\end{document}